\CompileMatrices\SelectTips{cm}{12}
\theoremstyle{plain}
\newtheorem{Thm}{\sc Theorem}[section]
\newtheorem{Theorem}[Thm]{\sc Theorem}
\newtheorem{Corollary}[Thm]{\sc Corollary}
\newtheorem*{Corollary*}{\sc Corollary}
\newtheorem{Proposition}[Thm]{\sc Proposition}
\newtheorem*{Proposition*}{\sc Proposition}
\newtheorem{Lemma}[Thm]{\sc Lemma}
\newtheorem{Conjecture}[Thm]{\sc Conjecture}
\theoremstyle{definition}
\newtheorem{Definition}[Thm]{Definition}
\newtheorem{question}[Thm]{Question}
\theoremstyle{remark}
\newtheorem{Remark}[Thm]{Remark}
\newtheorem*{Example*}{Example}
\newtheorem*{Remark*}{Remark}
\newcommand{\CC}{{\mathbb C}}
\newcommand{\ZZ}{{\mathbb Z}}
\newcommand{\PP}{{\mathbb P}}
\newcommand{\QQ}{{\mathbb Q}}
\newcommand{\RR}{{\mathbb R}}
\newcommand{\cE}{{\mathcal E}}
\newcommand{\cF}{{\mathcal F}}
\newcommand{\cG}{{\mathcal G}}
\newcommand{\cH}{{\mathcal H}}
\newcommand{\cL}{{\mathcal L}}
\newcommand{\cJ}{{\mathcal J}}
\newcommand{\cM}{{\mathcal M}}
\newcommand{\cN}{{\mathcal N}}
\newcommand{\cO}{{\mathcal O}}
\newcommand{\GL}{\mathop{\rm GL\, }}
\newcommand{\Sing}{\mathop{\rm Sing\, }}
\newcommand{\Hom}{{\mathop{{\rm Hom}}}}
\newcommand{\torsion}{\mathop{\rm torsion}}
\newcommand\Supp{\mathop{\rm Supp\, }}
\begin{document}

\title{On birational boundedness of foliated surfaces}
\author[C.~Hacon]{Christopher D. Hacon}
\address{
Department of Mathematics, University of Utah, 155 S 1400 E, JWB 233,
Salt Lake City, UT 84112, USA}
\email{hacon@math.utah.edu}

\author[A.~Langer]{Adrian Langer}
\address{Institute of Mathematics, University of Warsaw,
ul.\ Banacha 2, 02-097 Warszawa, Poland}
\email{alan@mimuw.edu.pl}

\subjclass[2010]{Primary 32M25, 
Secondary 14C20, 14E99, 32F75.}
\thanks{The first author was partially supported by NSF research grants no: DMS-1801851,  DMS-1840190
and by a grant from the Simons Foundation; Award Number: 256202. The second author was partially supported Narodowe Centrum Nauki, contract number 2018/29/B/ST1/01232.}
\date{\today}

\maketitle


\medskip

\begin{abstract}In this paper we prove a result on the effective generation of pluri-canonical linear systems on foliated surfaces of general type. Fix a function $P: \mathbb Z _{\geq 0}\to \mathbb Z $, then there exists an integer $N>0$ such that  if $(X,\cF)$ is a canonical or nef model of a foliation of general type with Hilbert polynomial $\chi (X, \cO _X(mK_\cF))=P(m)$ for all $m\in \mathbb Z _{\geq 0}$, then $|mK_\cF|$ defines a birational map for all $m\geq N$.

On the way, we also prove a Grauert-Riemenschneider type vanishing theorem for foliated surfaces with canonical singularities. 
 \end{abstract}

\section*{Introduction}
In recent years a powerful theory for the birational classification of foliated algebraic surfaces has been developed by Brunella, McQuillan and others (see \cite{McQ08}, \cite{Br15} and references therein). This theory extends the classical results 
of the birational classification of algebraic surfaces in terms of their canonical bundle to the case of foliated surfaces in terms of the canonical bundle of their foliation $K_\cF$. This classification is
particularly precise for foliated surfaces of non-maximal Kodaira dimension $\kappa (K_\cF)<2$. In the case of maximal Kodaira dimension $\kappa (K_\cF)=2$, by work of Brunella and McQuillan, it is known that smooth foliated surfaces with canonical singularities admit unique minimal, nef and canonical models. In light of the existence of canonical models, one may even hope that there is a well behaved moduli functor for these canonical models of general type. Note however that by a result of McQuillan, for any canonical model $(X,\cF)$ with cusp singularities, $K_{\cF}$ is not a $\mathbb Q$-Cartier divisor. In particular, the canonical rings $R(K_\cF)$ of canonical models $(X,\cF)$ with cusp singularities are not finitely generated or equivalently $K_\cF$ is not ample.  Therefore, if such a moduli functor exists, it is expected not to be algebraic.

With a view to further understanding the birational geometry of foliated surfaces of general type and in particular issues related to the existence of a moduli functor, the first most natural question to address is the boundedness of this functor. To this end we ask the following
\begin{Conjecture} For any  integer valued function  $P:\mathbb Z_{\geq 0}\to \mathbb Z$, does there exist an integer $m_P$ such that if $(X,\cF)$ is a canonical model of a complete foliated surface with   ${\rm kod} (\cF)=2$ and $\chi (X,\cO _X(mK_\cF) )=P(m)$ for all $m\geq 0$, then  for all $ m>0$ divisible by $ m_P$, $|mK_\cF|$ defines a birational map which is an isomorphism on the complement of the cusp singularities?
\end{Conjecture}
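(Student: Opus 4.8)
The plan is to transplant the standard machinery for effective pluricanonical maps of varieties of general type (Bombieri, Koll\'ar, Hacon--McKernan, Takayama) to the foliated setting, using as the essential new input the Grauert--Riemenschneider type vanishing for foliated surfaces with canonical singularities. \emph{Step 1 (reduction and the role of $P$).} Since the spaces $H^0(X,\cO_X(mK_\cF))$ depend only on the birational equivalence class of $(X,\cF)$ among foliated surfaces with canonical singularities, and a canonical model has the same pluricanonical sections as its nef model, it suffices to treat a nef model $(X,\cF)$: then $K_\cF$ is nef and big, $X$ has mild singularities, and $K_\cF$ is $\mathbb Q$-Cartier away from the cusps. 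Combining the foliated Grauert--Riemenschneider vanishing with a Kawamata--Viehweg-type vanishing for the nef and big class $K_\cF$, one gets $H^i(X,\cO_X(mK_\cF))=0$ for $i>0$ and all $m$ past an explicit bound, hence $h^0(X,\cO_X(mK_\cF))=P(m)$ for such $m$. In particular the volume $v:=\mathrm{vol}(K_\cF)$ is twice the leading coefficient of $P$; it is determined by $P$ and, by the general type hypothesis, positive.

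\emph{Step 2 (effective birationality for fixed volume).} With $v$ pinned down, the goal is to produce $N=N(v)$ so that, for all $m\ge N$, $|mK_\cF|$ separates two general points $x\ne y$ and general tangent directions. The recipe is the usual one: using $v>0$ together with the lower bound on $h^0(mK_\cF)$ from Step 1, construct for a bounded rational number $\lambda$ an effective divisor $D\sim_{\mathbb Q}\lambda K_\cF$ such that $(X,D)$ fails to be klt at $x$ but is klt on a punctured neighbourhood; a tie-breaking perturbation then isolates $x$ as a non-klt centre, and a Nadel-type vanishing --- deduced again from the foliated vanishing theorem, applied on a log resolution $\pi\colon(Y,\cG)\to(X,\cF)$ while tracking the discrepancy contributions of $\cF$ and of the cusps --- lifts a section of $mK_\cF$ vanishing at $y$ but not at $x$. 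Running this symmetrically and for tangent vectors gives birationality; for the full Conjecture one would further note that on the canonical model $mK_\cF$ is ``ample away from the cusps'', so for $m\gg0$ the map is an embedding there.

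\emph{Step 3 (uniformity) and the main obstacle.} Because of the cusps the family of such $(X,\cF)$ is genuinely unbounded, so every estimate in Step 2 must be made to depend on $v$ alone. For this I would bound $K_X\cdot K_\cF$, $K_X^2$ and the contribution of $\Sing X$ in terms of $v$ using Bogomolov--McQuillan--Miyaoka-type inequalities for foliations together with a Noether-type inequality for foliated surfaces of general type, so that the Reider/intersection-theoretic inputs to Step 2 become uniform. I expect the genuinely hard parts to be exactly the two foliated-specific points: (i) proving and then exploiting the vanishing theorems when $K_\cF$ is only $\mathbb Q$-Cartier off the cusps and not $\mathbb Q$-Cartier at them, which forces one to work on $Y$, compare $K_\cG$ with the reflexive pullback of $mK_\cF$, and check that the error term is effective and $\pi$-exceptional --- precisely where the canonical-singularity hypothesis and the fine structure of cusp singularities are used; and (ii) the uniformity in $v$, i.e.\ confirming that $\mathrm{vol}(K_\cF)$ is really the only modulus controlling the effective statement. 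The reduction to nef models, the numerical bookkeeping, and the final separation-of-points argument should be routine once (i) and (ii) are in place.
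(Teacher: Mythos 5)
The statement you are trying to prove is labeled a \textsc{Conjecture} in the paper, and the paper does \emph{not} prove it: the authors explicitly call their Theorem~\ref{t-main} ``an important first step towards this conjecture.'' What the paper establishes is uniform birationality of $|mK_\cF|$ for $m\ge m_P$ (Theorems~\ref{birational-boundedness-nef} and~\ref{birational-boundedness-canonical}); the stronger assertion that the map is an isomorphism away from the cusp singularities remains open. Your closing remark that ``on the canonical model $mK_\cF$ is ample away from the cusps, so for $m\gg0$ the map is an embedding there'' is exactly the unresolved part, and it is not a routine consequence of birationality: $K_\cF$ fails to be $\QQ$-Cartier at the cusps, so one cannot localize ampleness and run a base-point-free argument near those points.

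More seriously, the engine of your Step~2 does not exist. You invoke ``a Kawamata--Viehweg-type vanishing for the nef and big class $K_\cF$'' and a ``Nadel-type vanishing'' for $mK_\cF$ ``deduced from the foliated vanishing theorem.'' The Grauert--Riemenschneider statement proved in the paper (Theorems~\ref{strong-vanishing} and~\ref{crepant-vanishing}) is a \emph{relative, birational} vanishing $R^1f_*\cO_X(K_\cF)=0$; it is not of the form $R^if_*\omega_X=0$, and it yields no absolute vanishing $H^i(X,\cO_X(mK_\cF))=0$. Indeed no Kawamata--Viehweg or Nadel analogue is known for $K_\cF$ --- this is one of the central obstructions in the birational theory of foliations, since $K_\cF$ is not the dualizing sheaf of $X$ and adjunction, multiplier ideals and tie-breaking are all calibrated to $K_X$. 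The paper circumvents this entirely: it applies a Reider-type theorem (Theorem~\ref{Reider-type}) to the honest adjoint system $|K_Z+L|$ where $L$ is a suitable multiple of $K_\cF$, on the partial resolution $Z\to Y$ of the cusps where the surface is projective with rational singularities. To make Reider's numerical hypotheses uniform, it does not bound $h^0$ at all; it reads off $K_\cG^2$, $K_\cG\cdot K_Y$, $\chi(\cO_Y)$ from the Hilbert \emph{Euler characteristic} $P$ via the normal-surface Riemann--Roch (Theorem~\ref{RR}), bounds the number of singularities and the index $i_\QQ(\cG)$ from the explicit Riemann--Roch contributions of terminal, dihedral, and cusp singularities (Sections~\ref{terminal}--\ref{can2}), and produces a nef divisor $K_Z+3i_\QQ(\cG)K_\cH$ from the cone theorem for log surfaces. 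Your Step~1 claim that $h^0(mK_\cF)=P(m)$ past an explicit bound, and your Step~3 appeal to a Bogomolov--Miyaoka-type plus Noether-type inequality to bound $K_X^2$ and $K_X\cdot K_\cF$ in terms of $v$ alone, are likewise unsupported and are not the mechanism the paper uses; the paper bounds these quantities directly from $P$, not from $\mathrm{vol}(K_\cF)$.

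So the proposal has two genuine gaps: (i) the absolute vanishing for $K_\cF$ on which Steps~1--2 rest is unavailable, and without it the multiplier-ideal/tie-breaking machinery does not start; and (ii) the ``isomorphism away from cusps'' part is asserted rather than argued, and is in fact the open content of the conjecture. If you want a route that actually closes, replace Step~2 by the Reider-type argument on $K_Z+L$ after passing to the minimal resolution of the cusps, and replace Step~1's vanishing by the Riemann--Roch bookkeeping of the singularity contributions; but even then you will have proved only birationality, not the full conjecture.
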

Notice that as observed above, $mK_\cF$ is not Cartier at the cusp singularities and hence these singularities are necessarily contained in the base loci of $|mK_\cF|$ for all integers $m>0$. In this paper, we prove an important first step towards this conjecture (see Theorems \ref{birational-boundedness-nef} and \ref{birational-boundedness-canonical}).
\begin{Theorem}\label{t-main}  For any  integer valued function  $P:\mathbb Z_{\geq 0}\to \mathbb Z$, there exists an integer $m_P$ such that if $(X,\cF)$ is a canonical or a weak nef model (see Definition \ref{d-wnm}) of a complete foliated surface with   ${\rm kod} (\cF)=2$ and $\chi (X,\cO _X(mK_\cF ) )=P(m)$ for all $m\geq 0$, then  for all $ m\ge  m_P$, $|mK_\cF|$ defines a birational map.
\end{Theorem}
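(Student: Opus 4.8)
The strategy is to combine a boundedness result for the surfaces and foliations in question with the Grauert--Riemenschneider type vanishing theorem stated above. First I would dispose of the reduction between the two types of model: given a weak nef model $(X,\cF)$ in the sense of Definition~\ref{d-wnm}, there is a birational morphism $\phi\colon X\to X_{c}$ onto the associated canonical model which is crepant for $K_\cF$ wherever the latter is $\QQ$-Cartier and contracts only $K_\cF$-trivial curves, so that $|mK_\cF|$ and $|mK_{\cF_c}|$ define the same map on a dense open set and the Euler characteristic of the canonical model is determined by $P$ up to finitely many possibilities; it therefore suffices to produce a uniform bound $m_P$ in the case of a canonical model, which from now on I assume.

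Second, I would read off the coarse invariants from $P$ and bound the singularities. The leading behaviour of $P$ gives the volume $v=K_\cF^2$ (here one uses that $K_\cF$ is big, so $h^2(mK_\cF)=0$ and $h^1(mK_\cF)=o(m^2)$ for $m\gg0$), and $P(0)=\chi(\cO_X)$. Away from its cusp singularities $(X,\cF)$ has only canonical---hence quotient---surface singularities along which $K_\cF$ is $\QQ$-Cartier; the cusps are the sole obstruction to $K_\cF$ being $\QQ$-Cartier, and the difference between $P(m)$ and its eventual quasi-polynomial part is precisely the sum of the local contributions of the cusps. Since each cusp contributes a bounded amount and, for a given bound on the local invariants, there are only finitely many cusp singularity types, the number and analytic type of the cusps of $(X,\cF)$ lie in a finite list. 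Combining this with the Baum--Bott index formulae and the Bogomolov--Miyaoka type inequality for foliated surfaces of general type, one bounds $K_X^2$, $c_2(X)$, and all the discrete data of the minimal resolution of $X$ and of the minimal model $(X_{\min},\cF_{\min})$. Consequently the surfaces and foliations that arise form a bounded family.

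Third, on a bounded family the conclusion is a matter of standard---though not entirely formal---arguments. The Grauert--Riemenschneider vanishing gives $h^2(X,\cO_X(mK_\cF))=0$ for $m\ge2$ and an absolute bound on $h^1(X,\cO_X(mK_\cF))$ (controlled by the fixed topology of the resolution), whence $h^0(mK_\cF)=P(m)+O(1)$ grows like $\tfrac{v}{2}m^2$. Running the usual pluricanonical argument---take a general member of $|m_0K_\cF|$ for suitable bounded $m_0$, restrict via the foliated adjunction/restriction sequence, and invoke the one-dimensional case together with the GR vanishing to control the restriction map---produces an $m_P$, depending only on $v$, $\chi(\cO_X)$ and the finite list of singularity types, such that $|mK_\cF|$ separates two general points and general tangent directions of $X$ for $m\ge m_P$; since $|kK_\cF|\ne0$ for all $k\gg0$ and birationality is preserved under enlarging the linear system by sections not vanishing at the chosen general points, after increasing $m_P$ one gets a birational map for every $m\ge m_P$. (Alternatively, once the family is known to be bounded, Noetherian induction and semicontinuity yield a uniform $m_P$ directly.)

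The main obstacle throughout is the failure of $K_\cF$ to be $\QQ$-Cartier at cusp singularities. This is what makes the full Hilbert function $P$---rather than just the volume---indispensable: the cusps are invisible to the leading term and must be bounded through the lower-order behaviour of $P$. It is also why a genuine Grauert--Riemenschneider statement on the singular surface $X$ is needed rather than an off-the-shelf vanishing on a smooth model, and why boundedness cannot be imported directly from the theory of polarized varieties---the cuspidal part has to be bounded by hand and carried along. Keeping track of the rounding of $m\,\mu^*K_\cF$ on a resolution, and of the interaction of the cusps with the pluricanonical restriction sequences so that all the resulting error terms remain uniformly bounded, is the technical heart of the proof.
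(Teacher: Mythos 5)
Your high-level outline---extract $K_\cF^2$, $K_\cF\cdot K_X$ and $\chi(\cO_X)$ from the Hilbert function, bound the singularity data via the local Riemann--Roch contributions, then run an effective base-point-freeness argument---is in the right direction, but the two load-bearing claims in your second and third steps do not hold as stated, and they are not how the paper proceeds.

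The first gap is the claim that ``the surfaces and foliations that arise form a bounded family.''  The Riemann--Roch formula used in Propositions~\ref{sing-nef-model} and~\ref{sing-can-model} only involves $K_\cG^2$, $K_\cG\cdot K_Y$, $\chi(\cO_Y)$ and the local contributions $a(y,mK_\cG)$; the quantity $K_Y^2$ (equivalently $c_2(X)$ or any absolute invariant of the underlying surface) never appears, so it is not controlled by $P$.  The Baum--Bott/BMY route you sketch would bound $K_\cF^2$ or $N_\cF^2$ in terms of $c_2$, not the other way round, and there is no reason for the surfaces $X$ (which need not be of general type as surfaces) to lie in a bounded family.  Consequently the fall-back ``Noetherian induction and semicontinuity on a bounded family'' is not available; the paper explicitly avoids any boundedness claim of this kind.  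Relatedly, your third step invokes a ``usual pluricanonical argument'' via a foliated adjunction/restriction sequence and the one-dimensional case; there is no such off-the-shelf argument for foliated surfaces, and this is precisely the substantive content the proof must supply.

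What the paper does instead is entirely explicit.  From $P$ one bounds the index $i(\cG)$ (or the $\QQ$-index $i_\QQ(\cG)$ in the canonical-model case) via Lemma~\ref{index-bound} and the computations of Section~\ref{contributions}.  Passing to the canonical model and then to the minimal resolution $Z$ of the cusps, Fujino's cone theorem for log surfaces shows that $K_Z+3i(\cG)K_{\cG}$ is nef, and the elementary volume estimate Lemma~\ref{pseudo} produces an explicit $\gamma$ (depending only on $K_\cG^2$, $K_\cG\cdot K_Y$ and $i(\cG)$) with $\gamma K_\cG-K_Z$ pseudoeffective.  Writing $(4i(\cG)+\lceil\gamma\rceil+1+a)K_\cG=K_Z+L$ with $L$ pseudoeffective, one checks $P^2>16$ and $P\cdot C>4$ for the nef part $P$ of $L$, and Langer's Reider-type theorem on normal projective surfaces (Theorem~\ref{Reider-type}) gives birationality of $|mK_\cG|$ for $m\ge 4i(\cG)+\lceil\gamma\rceil+1$; Sakai's projection formula then transfers the statement to $Y$.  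Also note that the Grauert--Riemenschneider-type vanishing theorems (Theorems~\ref{strong-vanishing} and~\ref{crepant-vanishing}), which you take as the engine of the proof, are proved in later sections of the paper and are not inputs to Theorem~\ref{t-main}; the only vanishing-type input is the local computation at cusps (Proposition~\ref{cusps}), used to count the cusps from $P$.
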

As an immediate consequence, following \cite{Per02}, we have
\begin{Corollary} For any  integer valued function  $P:\mathbb Z_{\geq 0}\to \mathbb Z$ and any integer $g\geq 0$, there exists an integer $d>0$ such that if $(X,\cF )$ is  a weak nef model of a complete foliated surface with   ${\rm kod} (\cF)=2$ and $\chi (X,\cO _X(mK_\cF ))=P(m)$ for all $m\geq 0$, and  if $(X,\cF)$ has a meromorphic first integral whose general fiber has geometric genus $g$, then the general leaf $C$ has bounded degree $C\cdot K_\cF\leq d$.
\end{Corollary}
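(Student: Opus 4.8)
The plan is to reduce to the case of an honest fibration, where $C\cdot K_\cF$ is governed by adjunction on a smooth model, and then to feed the (now uniform) numerical data into the bound of Pereira \cite{Per02}. Let $\varphi\colon X\dashrightarrow B$ be the given meromorphic first integral. After a sequence of blow-ups $\pi\colon X'\to X$ and a Stein factorization, we may assume that $f=\varphi\circ\pi\colon X'\to B$ is a morphism with connected fibres and that the induced foliation $\cF'$ on $X'$ is the foliation by fibres of $f$. A general leaf of $\cF$ is then $C=\pi(C')$, where $C'$ is a general fibre of $f$; by hypothesis $C'$ has geometric genus $g$, and since $C$ is general it misses the finite set $\Sing(\cF)$ and $\pi|_{C'}\colon C'\to C$ is an isomorphism. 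On $X'$ one has $(C')^2=0$, and since the restriction of $\cF'$ to a general fibre agrees with the tangent sheaf of that fibre, adjunction gives $K_{\cF'}\cdot C'=K_{X'}\cdot C'=2g-2$.

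Next I would compare $K_\cF\cdot C$ with $K_{\cF'}\cdot C'$. Writing $K_{\cF'}=\pi^{*}K_\cF+\sum_i a_iE_i$ over a neighbourhood of $C$ (where $K_\cF$ is $\QQ$-Cartier, as $C$ avoids the cusps) and restricting to $C'$ yields
\[
C\cdot K_\cF=K_{\cF'}\cdot C'-\sum_i a_i\,(E_i\cdot C')=(2g-2)-\sum_i a_i\,(E_i\cdot C').
\]
If the weak nef model has canonical foliation singularities along $C$, then $a_i\ge 0$ and one gets $C\cdot K_\cF\le 2g-2$ at once; in general the defect $\sum_i a_i(E_i\cdot C')$ must be estimated, and this is exactly the point of Pereira's argument in \cite{Per02} (and of Langer's appendix there): via a Bogomolov-type inequality for foliated surfaces it bounds $C\cdot K_\cF$ in terms of $g$ and the orbifold Chern numbers $\bar c_1^{\,2}(\cF)$, $\bar c_2(\cF)$.

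Finally, Theorem \ref{t-main} makes these invariants depend only on $P$. Indeed, for $m\ge m_P$ the system $|mK_\cF|$ is birational, so $K_\cF$ is big; combined with the Grauert--Riemenschneider-type vanishing proved in this paper, $h^i(X,\cO_X(mK_\cF))=0$ for $i>0$ and $m\gg0$, so $(K_\cF)^2=\lim_m 2P(m)/m^2$ and $\chi(\cO_X)$ are determined by $P$, and $\varphi_{m_P}$ realizes $X$ birationally onto a surface of bounded degree in $\PP^{\,P(m_P)-1}$; hence the pairs $(X,\cF)$ in question lie in a bounded family and $\bar c_1^{\,2}(\cF)$, $\bar c_2(\cF)$ are bounded by a constant $c(P)$. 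Inserting this into the estimate of \cite{Per02} gives $C\cdot K_\cF\le d$ for a suitable $d=d(P,g)$. The hard part is the middle step: controlling the defect coming from the (a priori non-$\QQ$-Cartier, possibly non-canonical) singularities of $\cF$, which is where the Bogomolov-type inequality of \cite{Per02} is essential; the boundedness furnished by Theorem \ref{t-main} then serves only to absorb all remaining constants into $d(P,g)$.
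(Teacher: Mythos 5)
Your opening computation is essentially correct and, had you committed to it, would give a cleaner bound than the paper's. A weak nef model has canonical foliation singularities \emph{by definition} (Definition \ref{d-wnm}), so every discrepancy $a_i$ in $K_{\cF'}=\pi^*K_\cF+\sum a_iE_i$ is nonnegative (and at points where $\cF$ is regular the discrepancy of a blow-up is $1-l(y)=1>0$, as in the proof of Lemma \ref{smooth-vanishing}); moreover $C'$, being a fibre of $g$, is not contained in any $\pi$-exceptional $E_i$, so $E_i\cdot C'\ge 0$. Hence the ``defect'' $\sum a_i(E_i\cdot C')$ is automatically $\ge 0$ and your identity already yields $K_\cF\cdot C\le 2g-2$, a bound depending only on $g$. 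You undercut yourself by treating nonnegativity of the $a_i$ as an extra hypothesis (``\emph{if} the weak nef model has canonical foliation singularities along $C$'') and pivoting to a supposedly ``general'' case that does not arise.

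The pivot is also where genuine errors enter. The Grauert--Riemenschneider-type results proved in the paper (Theorems \ref{strong-vanishing}, \ref{crepant-vanishing}) are statements about the higher direct images $R^if_*\cO_X(mK_\cF)$ for a \emph{birational morphism} $f$; they do not give $h^i(X,\cO_X(mK_\cF))=0$ for $m\gg 0$. Similarly, the paper proves birational boundedness of the pluricanonical images, not that the pairs $(X,\cF)$ lie in a bounded family, and it nowhere bounds $\bar c_1^{\,2}(\cF)$ or $\bar c_2(\cF)$; none of that machinery (nor any Bogomolov-type inequality) is used for this corollary. The paper's actual argument, following \cite{Per02}, is different and more robust: from the surjection $\cO_{X'}(mK_{\cF'})\to\cO_{C'}(mK_{C'})$ one compares $h^0(mK_{\cF'})=\tfrac{\mathrm{vol}(K_\cF)}{2}m^2+O(m)$ (quadratic, with leading term determined by $P$) against $h^0(mK_{C'})=(2m-1)(g-1)$ (linear), concluding that $|m_0K_{\cF'}-C'|\neq\emptyset$ for some $m_0=m_0(P,g)$, whence $K_\cF\cdot C\le m_0K_\cF^2$ by nefness of $K_\cF$. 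So: your adjunction/discrepancy route is valid and in fact sharper for weak nef models, but your write-up does not recognize this, is internally inconsistent (you assert $C$ misses $\Sing(\cF)$, in which case $E_i\cdot C'=0$ and there is no ``defect'' to estimate at all), and the concluding steps invoke vanishings and boundedness statements that the paper does not prove.
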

\begin{proof} The proof is identical to the one in \cite{Per02}. We include a sketch for the convenience of the reader. Let $f:X'\to X$ be a resolution such that if $\cF'=f^*\cF$, then 
there is a morphism to a curve $g:X'\to B$ where $T_{\cF'}={\rm ker}\left( T_{X'}\to g^*T_B \right)$.
Notice that $K_{\cF'}|_{C'}=K_{C'}$ and we have a surjection $\cO _{X'} (mK_{\cF'})\to
\cO _{C'}(mK_{C'})$, where $C'$ is a general fiber of $g$. Since $h^0(mK_{C'})=(2m-1)(g-1)$ for $m\geq 2$ and $h^0(mK_{\cF'})=\frac{{\rm vol}(K_{\cF})}2m^2+O(m)$, it follows easily that there exists an integer $m_0$ (depending only on $P$ and $g$) such that $|m_0K_{\cF'}-C'|$ is non-empty. Let $C=f(C')$.
Then $|m_0K_{\cF}-C|$ is also non-empty and as $K_{\cF}$ is nef, we have $K_{\cF}\cdot C\leq m_0K_{\cF}^2$.
\end{proof}
We remark that even though the hypothesis $\chi (X,\cO _X(mK_\cF ))=P(m)$ for all $m\geq 0$ is very natural from the point of view of moduli spaces, one could hope that (analogously to the case of SLC models  cf. \cite{HMX18}), the behaviour of pluricanonical maps is determined simply by the volume ${\rm vol}(K_\cF)$. 
It would also be interesting to understand the structure of the set of canonical volumes. The most natural question is.
\begin{question} Let $V=\{ {\rm vol}(K_\cF)\}$ where $(X,\cF)$ are canonical  models of  foliated surfaces of general type. Is $V$ well ordered and in particular does it admit a positive minimum?
\end{question}
Next we recall an example (which was communicated to us by F. Bernasconi \cite{Ber19}) which shows that the set $V$ is not discrete and in fact it has accumulation points from below.

\vskip.3cm
\noindent
{\bf Jouanolou's foliation.} Let $\cJ$ be the Jouanolou's foliation on $\mathbb P ^2$ defined by the vector field on $\mathbb C ^3\setminus \{0 \}$ given by \[ v=Z^d\frac \partial {\partial X}+
X^d\frac \partial {\partial Y}+Y^d\frac \partial {\partial Z}\]
A direct computation shows that $\cJ _d$ has reduced singularities, $K_{\cJ _d}=\cO _{\mathbb P ^2}(d-1)$ and the automorphism group of the foliation is the following (see \cite[p.~160-162]{Jo79})
\[ {\rm Aut}(\cJ _d)= \mathbb Z/(d^2+d+1)\mathbb Z \rtimes \mathbb Z/3\mathbb Z\]
and is generated by 
\[ T([X:Y:Z])=[ Y:Z:X],\qquad \psi ([X:Y:Z])=[X:\zeta Y:\zeta ^{d+1}Z]\]
where $\zeta $ is a primitive $(d^2+d+1)$-th root of unity. For $d\geq 2$, $\cJ _d$ is a foliation on $\mathbb P ^2$ with ample canonical class and we consider the {quotient of $\mathbb P ^2$ by the cyclic group generated by $\psi$} \[f_d:(\mathbb P ^2,\cJ_d )\to (X_d, \cF _d).\] The foliation $\cF _d$ is singular with reduced singularities at $f_d({\rm Sing}(\cJ _d))$ and with terminal singularities at the points $f_d([1:0:0])$,   $f_d([0:1:0])$, and  $f_d([0:0:1])$.
Since $K_{\cJ _d}=f_d^*K_{\cF _d}$, the foliation $\cF_d$ has ample canonical class and \[ 1> K_{\cF _d}^2=\frac {(d-1)^2}{d^2+d+1}\geq \frac 1 7,\ {\rm and}\qquad \lim_{m\to \infty } K_{\cF _d}^2=1.\] 
In particular we see that the set $V$ is not discrete and in fact $1$ is an accumulation point from below.

Finally, we prove a Grauert-Riemannschneider type vanishing theorem for foliated surfaces with canonical singularities (see Theorems \ref{strong-vanishing} and \ref{crepant-vanishing}).
\begin{Theorem} Let $f:(X,\cF )\to (Y,\cG )$ be a proper birational morphism of foliated surfaces with only canonical singularities, then 
\begin{enumerate}
\item $f_*\cO _X(mK_\cF)=\cO _Y(mK_\cG) $ for all $m\geq 0$,
\item $R^1f_* \cO _X(K_\cF )=0$, and
\item if $K_\cF =f^*K_\cG$, then  $R^1f_* \cO _X(mK_\cF )=0$ for all $m\ne 0$.\end{enumerate}
\end{Theorem}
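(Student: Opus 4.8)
The plan is to establish (1), (2), (3) in that order, reducing the vanishing statements to the case in which $X$ is smooth and $\cF$ has reduced singularities. For (1) I would follow the proof of the analogous surface statement $f_*\cO_X(mK_X)=\cO_Y(mK_Y)$ for canonical morphisms. Choosing a common foliated resolution $\pi_X\colon W\to X$, $\pi_Y\colon W\to Y$ and using that $(X,\cF)$ and $(Y,\cG)$ are canonical gives $K_{\cF_W}=\pi_X^*K_\cF+E_X=\pi_Y^*K_\cG+E_Y$ with $E_X,E_Y\ge 0$; pushing forward to $X$ yields $K_\cF=f^*K_\cG+E$, where $E:=\pi_{X*}E_Y\ge 0$ is supported on $\mathrm{Exc}(f)$ and $f^*$ is the foliated pull-back (defined even at cusps, where $K_\cG$ is not $\QQ$-Cartier). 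A local section of $f_*\cO_X(mK_\cF)$ is a rational function $\varphi$ with $\mathrm{div}\,\varphi+mK_\cF\ge 0$ over $f^{-1}(U)$; evaluating this at the codimension-one points of $Y$, where $K_\cF$ and $K_\cG$ coincide, gives $\mathrm{div}\,\varphi+mK_\cG\ge 0$, so $\varphi$ is a section of the reflexive sheaf $\cO_Y(mK_\cG)$, proving $f_*\cO_X(mK_\cF)\subseteq\cO_Y(mK_\cG)$ for every $m$. For $m\ge 0$, pulling the inequality $\mathrm{div}\,\varphi+mK_\cG\ge 0$ back by $f^*$ (which preserves effectivity) and adding $mE\ge 0$ gives the reverse inclusion; this is (1).

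For (2) and (3) I would first pass to a Seidenberg resolution $\sigma\colon(\widetilde X,\widetilde\cF)\to(X,\cF)$ with $\widetilde X$ smooth and $\widetilde\cF$ reduced (hence canonical). By (1), $\sigma_*\cO_{\widetilde X}(mK_{\widetilde\cF})=\cO_X(mK_\cF)$ for all $m\ge 0$, so the low-degree exact sequence of the Leray spectral sequence for $f\sigma$ produces an injection $R^1f_*\cO_X(mK_\cF)\hookrightarrow R^1(f\sigma)_*\cO_{\widetilde X}(mK_{\widetilde\cF})$; hence it suffices to treat (2), and (3) for $m>0$, when $X$ is smooth and $\cF$ reduced (the case $m<0$ of (3) I would then deduce from the case $m>0$ via relative Grothendieck--Serre duality, which converts it into a statement of the shape of (1)). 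After this reduction $R^1f_*\cO_X(mK_\cF)$ is supported on the finite set $f(\mathrm{Exc}(f))$, so by the theorem on formal functions its vanishing is equivalent to $\varprojlim_n H^1\bigl(nF,\cO_X(mK_\cF)\otimes\cO_{nF}\bigr)=0$ for each exceptional fibre $F$.

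Now (3): here $\cO_X(mK_\cF)=\cO_X(mf^*K_\cG)$ is a line bundle that is $f$-numerically trivial. Over the non-cusp singularities of $Y$ -- whose resolutions are Hirzebruch--Jung strings of rational curves, hence rational singularities, so that every $f$-numerically trivial line bundle restricts to the structure sheaf on every thickening $nF$ -- one gets $R^1f_*\cO_X(mK_\cF)=0$ for all $m$. Over a cusp, $K_\cG$ is not $\QQ$-Cartier, the underlying surface singularity is not rational, the exceptional cycle $F$ has $H^1(\cO_F)\ne 0$, and $R^1f_*\cO_X\ne 0$; here I would use that $mK_\cF|_F$ is a \emph{nontrivial} $f$-numerically trivial class for every $m\ne 0$ -- precisely because no multiple of $K_\cF$ is Cartier near the cusp -- so that tensoring by it kills $H^1$ not only of $F$ (arithmetic genus one) but of every $nF$ (for $n\ge 2$ the graded piece $\cO_F\bigl(-(n-1)F\bigr)\otimes\cO_X(mK_\cF)|_F$ has non-negative degree on each component and positive total degree, while the $n=1$ piece is the nontrivial class itself). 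By formal functions this gives (3). As for (2): over the cusps $E=0$ (a cusp has a crepant resolution), so (3) with $m=1$ applies; over the remaining singularities I would use the restriction formula for an exceptional curve $C$ -- namely $\cO_X(K_\cF)|_C\cong\omega_C(\Sigma_C)$ with $\Sigma_C\ge 0$ supported on $\Sing\cF\cap C$ if $C$ is $\cF$-invariant (so $\deg(\cO_X(K_\cF)|_C)\ge -1$, an exceptional invariant curve carrying a singularity by the Camacho--Sad index theorem), and $\deg(\cO_X(K_\cF)|_C)=\mathrm{tang}(\cF,C)-C^2>0$ if $C$ is transverse -- and prove $H^1(nF,\cO_X(K_\cF)\otimes\cO_{nF})=0$ by induction over the components of the Hirzebruch--Jung string $F$; equivalently, one writes $K_\cF=K_X+\lceil\Delta\rceil+L$ with $\Delta\ge 0$ supported on $\mathrm{Exc}(f)$, $(X,\Delta)$ log canonical and $L$ an $f$-nef and $f$-big $\QQ$-divisor, and quotes relative Kawamata--Viehweg vanishing.

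I expect the main obstacle to be the cusp singularities. There $\cO_Y(mK_\cG)$ is not $\QQ$-Cartier, the surface is not rational, $R^1f_*\cO_X\ne 0$, and none of the usual positivity arguments apply; instead one must control exactly the non-triviality of $mK_\cF$ on the exceptional cycle of arithmetic genus $\ge 1$, which relies on the structure theory of cusps of Brunella and McQuillan and on the failure of $\QQ$-Cartierness. The secondary difficulty is the cohomological bookkeeping on the reducible exceptional fibres over the cyclic-quotient singularities, where $\cO_X(K_\cF)$ may restrict to a degree $-1$ line bundle on some component, so that one must combine the restriction formula with a well-chosen ordering of the components (or, equivalently, a well-chosen log boundary $\Delta$).
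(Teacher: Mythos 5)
Part (1) is fine and is essentially the paper's argument (Sakai's projection formula plus the inequality $K_\cF\ge f^*K_\cG$). The cusp analysis for (3) is also close to the paper's Lemma on cusps: show $H^0(Z,\cO_Z(-f^*D))=0$ for the exceptional cycle $Z$ by a component-by-component argument using that $mK_\cG$ is not Cartier. But the overall \emph{reduction} step you use for (2) and (3) has a genuine gap in the case of (3).

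You pass to a full Seidenberg resolution $\sigma\colon(\widetilde X,\widetilde\cF)\to(X,\cF)$ and use the Leray injection $R^1f_*\cO_X(mK_\cF)\hookrightarrow R^1(f\sigma)_*\cO_{\widetilde X}(mK_{\widetilde\cF})$ to reduce to the case that the source is smooth with reduced foliation. That injection is correct, but $\sigma$ is \emph{not} crepant for the foliation: over a terminal or dihedral singular point of $(X,\cF)$ one has $K_{\widetilde\cF}=\sigma^*K_\cF+E_\sigma$ with $E_\sigma>0$, so $K_{\widetilde\cF}\neq(f\sigma)^*K_\cG$. Thus the hypothesis of (3) is lost after the reduction, and in fact the target group does not vanish for $m\ge2$. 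The paper's own remark after Lemma \ref{cyclic-vanishing} records that $\chi(y,\cO_X(mK_\cF))$ is typically nonzero for $m\ge2$ after contracting an $\cF$-chain or blowing up a smooth foliated point: e.g.\ a terminal foliation over $\frac13(1,1)$ has $\chi(y,\cO_X(2K_\cF))=1$. So $R^1(f\sigma)_*\cO_{\widetilde X}(mK_{\widetilde\cF})\neq0$ in general, and the injection gives you nothing (an injection into a nonzero group is not vanishing). Concretely: if $Y$ has a dihedral singularity and $X$ is its minimal partial crepant resolution (with two $\frac12(1,1)$ points, $K_\cF=f^*K_\cG$), applying $\sigma$ blows those up non-crepantly and your reduction is invoking a false statement.

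The fix is to resolve only as much as is crepant: replace the Seidenberg resolution with the \emph{minimal partial crepant resolution} $g\colon(Z,\cH)\to(X,\cF)$, which keeps $K_{\cH}=g^*K_\cF=(fg)^*K_\cG$ and hence preserves the hypothesis of (3), at the cost of $Z$ possibly retaining $\frac12(1,1)$ singular points. This is exactly what the paper does (Section \ref{special-minimal} and the proof of Theorem \ref{crepant-vanishing}). The paper also sidesteps the cohomological case-by-case you propose by working instead with the modified Euler characteristics $\chi(y,\cO_X(mK_\cF))$ of Subsection \ref{mod-Euler}: they are non-negative, additive under composition, and can be computed in closed form from the Riemann--Roch contributions of Section \ref{contributions} for each elementary map (blow-up at a smooth foliated point, contraction of an $\cF$-chain, minimal partial crepant resolution of a canonical point), yielding the vanishing uniformly. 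Your formal-function/Artin argument would likely also work once the reduction is corrected, but you would still need the structure of the minimal partial crepant resolution to keep the crepancy hypothesis throughout. For (2) the reduction to the Seidenberg resolution is harmless (since $m=1$), but note you still need to verify the vanishing on all thickenings $nF$ of a Hirzebruch--Jung string where $\cO_X(K_\cF)$ has degree $-1$ on the first component; this works, but requires the peeling argument you allude to, and the Kawamata--Viehweg alternative is not immediate because the required $f$-nef $\QQ$-divisor $L$ is not obviously available (the positive contribution $-E'$ from the surface discrepancies is exceptional, hence not $f$-nef).
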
 
As a consequence we see that the Hilbert function $\chi (X,\cO _X(mK_\cF))$ of the canonical model determines the Hilbert function of any almost minimal model (in the same birational class).
It is natural to ask if similar results hold in higher dimensions. In particular we ask:
\begin{question} Let $f:(X,\cF )\to (Y,\cG )$ be a proper birational morphism of foliated varieties with only canonical singularities, then does $R^if_*  \cO _X(K_\cF )$ vanish for all $i>0$?
\end{question}
\medskip

\subsection*{Notation.}

All varieties and spaces in this paper are  defined over the complex numbers.
Unless stated otherwise, by a surface we mean a $2$-dimensional algebraic space. {We will work exclusively with complete normal algebraic spaces of finite type over $\CC$ and with their localizations at closed points $x\in X$. Occasionaly, we will consider the analytic germ of an algebraic space at closed points $x\in X$.}

\section{Preliminaries}

\subsection{Normal surfaces}

In this subsection we recall several basic results on normal surfaces that will be used throughout the paper.
When considering canonical models of foliations we will necessarily need to work with algebraic spaces. However,
in many cases such surfaces will be projective due to the following basic result of Artin (see \cite[Theorem 2.3]{Ar62}).

\begin{Theorem} \label{Artin}
Let $X$ be a normal complete surface with at most rational singularities. Then $X$ is projective.
\end{Theorem}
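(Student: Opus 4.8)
\emph{Overview and reduction.} The idea is to realize $X$ as the contraction of a negative‑definite configuration of curves on a smooth projective surface, and to use the rationality hypothesis to see that this contraction lands in the category of projective schemes. By Chow's lemma for algebraic spaces there is a proper birational morphism $W\to X$ with $W$ a scheme, projective over $\CC$, which is an isomorphism over a dense open subspace; replacing $W$ by a resolution of singularities (still projective) we obtain a smooth projective surface $Y$ and a proper birational morphism $\pi\colon Y\to X$. Let $E=\bigcup_i E_i$ be the reduced exceptional locus of $\pi$; it is contracted to finitely many points $x_1,\dots,x_k\in X$, and for the curves lying over a fixed $x_j$ the intersection matrix $N=(E_i\cdot E_j)$ is negative definite, by the classical fact that curves contracted by a birational morphism onto a \emph{normal} surface have negative definite intersection form.

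\emph{A nef and big divisor trivial on $E$.} Fix an ample Cartier divisor $A$ on $Y$. On each connected component of $E$ the linear system $N\mathbf b=-(A\cdot E_\bullet)$ has a unique rational solution $\mathbf b=(b_i)$; since $-N$ is a symmetric $M$-matrix (positive definite, with positive diagonal and nonpositive off-diagonal entries), $(-N)^{-1}$ has nonnegative entries, hence $b_i\ge 0$ for all $i$. Replacing $A$ by a positive multiple we may assume $b_i\in\ZZ_{\ge0}$, and we put $\widetilde M=A+\sum_i b_i E_i\in\Pic(Y)$. Then $\widetilde M\cdot E_i=0$ for all $i$ by construction, while $\widetilde M\cdot C=A\cdot C+\sum_i b_i(E_i\cdot C)\ge A\cdot C>0$ for every irreducible curve $C\not\subset E$; hence $\widetilde M$ is nef, and $\widetilde M^2=\widetilde M\cdot A=A^2+\sum_i b_i(A\cdot E_i)>0$, so $\widetilde M$ is nef and big, with null locus $\mathrm{Null}(\widetilde M)=E$.

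\emph{Rationality and conclusion.} Now the hypothesis enters. Over a singular point $x_j$ the reduced fibre $\pi^{-1}(x_j)$ is a tree of smooth rational curves --- a standard consequence of rationality, equivalent by Artin's criterion to $H^1(\cO_Z)=0$ for every effective cycle $Z$ supported on it --- and over a smooth point $x_j$ this is automatic, since there $\pi$ is a composition of blow-ups of points. Therefore any line bundle of degree $0$ on every component of $E$ is trivial on $E$, so in particular $\widetilde M|_E\cong\cO_E$; since $\widetilde M$ is nef and its restriction to $\mathrm{Null}(\widetilde M)=E$ is trivial, $\widetilde M$ is semiample (invoking a semiampleness criterion for nef divisors on surfaces, à la Zariski and Fujita). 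For $n\gg0$ the system $|n\widetilde M|$ is base-point free and defines a morphism $\varphi\colon Y\to\PP^N$, birational onto its image, contracting exactly the curves orthogonal to $\widetilde M$, that is, exactly $E$; let $\overline Z$ be the normalization of $\varphi(Y)$, a normal projective surface, with induced proper birational morphism $Y\to\overline Z$ contracting exactly $E$. Both $\pi\colon Y\to X$ and $Y\to\overline Z$ contract the same curves, so the resulting birational map $X\dashrightarrow\overline Z$ is an isomorphism away from $\{x_1,\dots,x_k\}$; a birational map between normal proper surfaces that is an isomorphism in codimension one is an isomorphism (no flops in dimension two), whence $X\cong\overline Z$ is projective.

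\emph{The main obstacle.} The essential point is the use of rationality in the third step: one must extract from the hypothesis precisely the statement that classes numerically trivial along the exceptional fibres are genuinely trivial there --- equivalently, that the relevant first cohomology groups vanish, so that the contraction of $E$ exists as a scheme rather than merely as an algebraic space --- and then feed this into a semiampleness result strong enough to cope with the degeneracy of $\widetilde M$ along $E$ (naive Kawamata--Viehweg-type arguments break down exactly along the components $E_i$ with $E_i^2\le-2$). A secondary, more bookkeeping matter is that the whole reduction (Chow's lemma, resolution of singularities, and uniqueness of contractions) has to be carried out in the category of algebraic spaces.
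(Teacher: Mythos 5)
Your argument is correct and is, in substance, the same route the paper takes: it resolves $X$ by a smooth projective surface and then applies the contractibility criterion of Artin. The paper simply cites \cite[Theorem 2.3]{Ar62} at this point, whereas you unpack that theorem's content (construct a nef and big divisor $\widetilde M$ trivial on the exceptional locus, use rationality via Artin's result that a numerically trivial line bundle on the exceptional fibre of a rational singularity is genuinely trivial, and invoke a Zariski--Fujita-type semiampleness statement to descend $\widetilde M$ to an ample divisor on the contraction), so yours is the same proof with the black box opened.
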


\begin{proof}
For any $X$ as above there exists a proper birational morphism $Y\to X$ from a smooth projective surface $Y$ {(see \cite[Corollary 3.43]{Kol07})}. 
So the assertion follows from the last part of  \cite[Theorem 2.3]{Ar62}.
\end{proof}
{\begin{Corollary}\label{c-proj}
Let $(X,\cF)$ be a complete foliated surface with canonical singularities such that $K_{\cF}$ is $\QQ$-Cartier, then $X$ is projective.\end{Corollary}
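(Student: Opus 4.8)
The plan is to reduce the statement to Artin's criterion (Theorem~\ref{Artin}): since $X$ is a complete normal algebraic space of dimension~$2$, it is enough to prove that $X$ has at most rational singularities. Rationality of a singularity can be checked on the (analytic) germ, so I would fix a closed point $x\in X$ and argue locally around $x$, where $K_\cF$ is $\QQ$-Cartier by hypothesis.

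I would then invoke the classification of canonical singularities of foliated surfaces due to Brunella and McQuillan (see \cite{Br15}, \cite{McQ08}). According to it, the germ $(X,x)$ is either a smooth point of $X$, or a cyclic quotient singularity (obtained by contracting an $\cF$-chain of invariant curves), or a cusp singularity (obtained by contracting a cycle of invariant curves). By a theorem of McQuillan, at a cusp singularity $K_\cF$ is not $\QQ$-Cartier; since we are assuming that it is, this case does not occur. Hence $x$ is either a smooth point or a (cyclic) quotient singularity of $X$.

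Quotient singularities of surfaces --- and, trivially, smooth points --- are rational, so $X$ has only rational singularities, and Theorem~\ref{Artin} gives that $X$ is projective. The point that needs some care is to phrase the classification and McQuillan's non-$\QQ$-Cartier statement in their local form, so that they apply to the germ $(X,x)$ rather than only to foliated surfaces that are globally canonical models of general type; once this is in place, the implications ``quotient singularity $\Rightarrow$ rational singularity $\Rightarrow$ projective'' are standard.
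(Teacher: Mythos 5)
Your argument follows the same route as the paper's proof: invoke McQuillan's classification of canonical foliation singularities, note that the $\QQ$-Cartier hypothesis rules out the cusp case, observe that the remaining surface singularities (quotient) are rational, and conclude by Theorem~\ref{Artin}. One small inaccuracy: the classification of the germs also includes \emph{dihedral} quotient singularities (the $2$-Gorenstein canonical case, cf.\ \S\ref{can1}), not only cyclic ones — but these are still quotient singularities and hence rational, so the argument is unaffected.
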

\begin{proof} By McQuillan's classification of canonical singularities (see \cite{McQ08} or the discussion in \S \ref{contributions}), we know that all such singularities are rational. The statement now follows from Theorem \ref{Artin}.
\end{proof}}
\subsubsection{Intersection theory on normal surfaces}\label{Mumford}
  
Here we recall Mumford's intersection pairing on normal surfaces (see, e.g., \cite[Section 1]{Sa84})
and some of its basic properties. 

Let $Y$ be a normal complete surface. Let $f: X\to Y$ is a proper birational morphism  from a smooth  surface $X$ and let $C=\sum _i C_i$ be the exceptional  divisor of $f$. If $D$ is a Weil $\RR$-divisor on $Y$ then we define $f^*D$ as $f_*^{-1}D+\sum x_i C_i$, where $f_*^{-1}D$ is the strict transform of $D$ and $x_i$ are the unique real numbers such that $(f_*^{-1}D+\sum x_i C_i)\cdot C_j=0$ for all $j$. If $D_1$ and $D_2$ are Weil $\RR$-divisors on $Y$ we define their intersection number by $D_1\cdot D_2= (f^*D_1)\cdot (f^*D_2)$. In case $D_1$ is a Cartier divisor and $D_2$ is a curve, $D_1\cdot D_2$ agrees with the degree of the line bundle $\cO_{D_2}(D_1)$.

We say that $D_1$ and $D_2$ are \emph{numerically equivalent} and write $D_1\equiv D_2$ if for every Weil divisor $C$ we have $D_1\cdot C=D_2\cdot C$. It is sufficient to check this equality in case $C$ is an irreducible curve.

Note that once we define the intersection pairing on {(complete)} normal surfaces we can also define the pull-back $f^*D$ for any proper birational morphism $f$ of normal surfaces and any Weil $\RR$-divisor $D$ (see \cite[Section 6]{Sa84}).

{For any pseudo-effective $\RR$-divisor $D$ on a complete normal surface, we define its Zariski decomposition $D=P+N$ where $P,N$ are the unique $\RR$ divisors satisfying 
\begin{enumerate}
\item $P$ is nef,
\item $N=\sum _{i=1}^kn_iN_i$ where $n_i>0$, and the $N_i$ are prime divisors such that the intersecion matrix $(N_i\cdot N_j)$ is negative definite, and
\item $P\cdot N_i=0$ for $1\leq i\leq k$.\end{enumerate}
The existence of the Zariski decomposition is standard for projective surfaces. In general, one can prove the existence of a Zariski decomposition following the proof given in \cite[\S7]{Sa84} for complex analytic surfaces. Alternatively one can argue as follows. Let $f:X\to Y$ be a resolution such that $X$ is  projective and let $P'+N'$ be the Zariski decomposition of $f^*D$. We set $P=f_*P'$ and $N=f_*N'$.
Since $P$ is nef, it is easy to see that $P'$ is nef and $P'-f^*P=-E$ where $E$ is an effective, exceptional $\QQ$-divisor. Let $N=f_* N'$, then $N'=f^*N+E$. Since the intersection matrix of $N'$ is negative definite, it follows that $E^2<0$ unless $E\equiv 0$. Since $E\geq 0$ and $-E$ is relatively nef, we have $E=0$. Thus $N'=f^*N$ and it follows easily that the intersection matrix of $N$ is negative definite. Finally, for any component $N_i$ of $N$, we have $P\cdot N_i=f^*P\cdot f^*N_i=P'\cdot f^*N_i=0$ since $f^*N_i$ is a sum of components of $N'$ and these intersect $P'=f^*P$ trivially.}

We need the following version of the Hodge index theorem.

\begin{Lemma} \label{HIT}
If $D_1$ and $D_2$ are Weil $\RR$-divisors on $Y$ such that $(a_1D_1+a_2D_2)^2>0$ for some $a_1, a_2\in \RR$, then 
$$D_1^2\, D_2^2\le (D_1\cdot D_2)^2$$
with equality if and only if some nonzero linear combination of $D_1$ and $D_2$ is numerically equivalent to $0$.
\end{Lemma}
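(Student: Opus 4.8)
The plan is to reduce the statement to the classical Hodge index theorem on a smooth projective surface by pulling back along a suitable resolution, and then to argue purely with a symmetric bilinear form of signature $(1,\rho-1)$.

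First I would choose a proper birational morphism $f:X\to Y$ from a smooth \emph{projective} surface $X$ (this exists by \cite[Corollary 3.43]{Kol07} together with Chow's lemma, or one may invoke that $Y$ is an algebraic space and take a resolution). Set $\widetilde D_i=f^*D_i$, the Mumford pull-backs, so that by definition $\widetilde D_i\cdot \widetilde D_j=D_i\cdot D_j$ for all $i,j$; in particular the hypothesis $(a_1D_1+a_2D_2)^2>0$ translates to $(a_1\widetilde D_1+a_2\widetilde D_2)^2>0$. Thus it suffices to prove the inequality and the equality case for the $\RR$-divisor classes $\widetilde D_1,\widetilde D_2$ inside $N^1(X)_\RR$, equipped with the intersection form, which by the classical Hodge index theorem has signature $(1,\rho(X)-1)$.

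Next, the core is a linear-algebra fact: if $q$ is a nondegenerate symmetric bilinear form of signature $(1,n-1)$ on a real vector space and $v,w$ span a subspace on which $q$ is not negative semidefinite (i.e.\ $q(a_1v+a_2w)>0$ for some scalars), then $q(v,v)\,q(w,w)\le q(v,w)^2$, with equality iff $v,w$ are linearly dependent (modulo the radical, but here the form is nondegenerate on $X$ so "modulo numerically trivial"). To see this, consider the $2\times 2$ Gram matrix $M=\begin{pmatrix} q(v,v) & q(v,w)\\ q(v,w) & q(w,w)\end{pmatrix}$. Restricting $q$ to $\mathrm{span}(v,w)$ gives a form represented by $M$; since this restriction takes a positive value it cannot be negative definite, so $M$ has at least one nonnegative eigenvalue. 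But it cannot be positive definite either: a positive-definite $2$-plane would violate signature $(1,n-1)$ (any two-dimensional positive-definite subspace is impossible when the positive index is $1$). Hence $M$ has eigenvalues of opposite sign or a zero eigenvalue, i.e.\ $\det M=q(v,v)q(w,w)-q(v,w)^2\le 0$, which is exactly the asserted inequality. Equality $\det M=0$ means $M$ is singular, so the restricted form is degenerate; combined with the fact that it is not negative definite, its radical is a line $\ell\subset\mathrm{span}(v,w)$ with $q|_\ell=0$ and $q$ positive on a complementary line. Then any nonzero vector in $\ell$ is a linear combination of $v$ and $w$ lying in the radical of $q$ restricted to the plane; pushing forward to $Y$ via $f_*$, the corresponding combination $c_1D_1+c_2D_2$ satisfies $(c_1D_1+c_2D_2)\cdot C=0$ for every curve $C$ on $Y$ (since $f^*$ of such a curve lies in $X$ and pairs trivially), hence $c_1D_1+c_2D_2\equiv 0$; conversely if such a numerically trivial combination exists then clearly $\det M=0$.

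I expect the main obstacle to be purely bookkeeping about the Mumford intersection pairing: one must check that $f_*$ of a class in $N^1(X)_\RR$ numerically trivial \emph{against all $f$-pull-backs of curves from $Y$} is genuinely numerically trivial on $Y$ in Mumford's sense, and that the exceptional directions do not interfere with the equality analysis. This is handled by the projection formula for the Mumford pairing, $f^*D\cdot f^*C=D\cdot C$, together with the observation that the image of $f^*:N^1(Y)_\RR\hookrightarrow N^1(X)_\RR$ is a subspace on which the restricted form is nondegenerate — because $f^*$ is injective and $D\cdot C=f^*D\cdot f^*C$ shows the pairing on $Y$ is itself nondegenerate after passing to the quotient by numerical equivalence — so the signature argument can be carried out either upstairs on $X$ or equivalently on the image subspace. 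Once this identification is in place, the two-dimensional Gram matrix computation above finishes both the inequality and the equality clause.
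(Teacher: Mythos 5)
Your approach is the same as the paper's: the paper's entire proof consists of the sentence ``the assertion is well known in case $Y$ is smooth; in general it follows by passing to a resolution.'' You are simply spelling out the details via Mumford pullbacks and the signature argument on $N^1(X)_\RR$, which is exactly what the paper has in mind. The inequality half of your argument is correct.

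However, your treatment of the equality case has a gap. When $\det M = 0$ you pass to a nonzero $v_0\in\ell$ in the radical of $q$ \emph{restricted to} $\mathrm{span}(v,w)$, and then assert that $v_0$ pairs trivially with $f^*C$ for every curve $C$ on $Y$. But $v_0$ being in the radical of the restricted form only tells you $q(v_0,\widetilde D_1)=q(v_0,\widetilde D_2)=0$; it says nothing about intersections with classes outside that two-plane, and $f^*C$ will in general not lie in $\mathrm{span}(\widetilde D_1,\widetilde D_2)$. So ``since $f^*$ of such a curve lies in $X$ and pairs trivially'' is not justified as written. The correct route (which you in fact announce in the sentence ``with equality iff $v,w$ are linearly dependent'') is to show that $\ell=\{0\}$, i.e.\ that $\widetilde D_1,\widetilde D_2$ are linearly dependent in $N^1(X)_\RR$. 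This is forced by the ambient signature $(1,\rho-1)$: if $\mathrm{span}(v,w)$ were a genuine two-plane containing a vector $h$ with $q(h)>0$, then $\mathrm{span}(v,w)\cap h^\perp$ is a line on which $q$ is negative definite (since $q|_{h^\perp}$ is negative definite), so the restricted form has signature $(1,1)$ and is nondegenerate, contradicting $\det M=0$. Hence a nontrivial linear relation $c_1\widetilde D_1+c_2\widetilde D_2=0$ holds in $N^1(X)_\RR$; pushing forward by $f_*$ and using $f^*D\cdot f^*C=D\cdot C$ then gives $c_1D_1+c_2D_2\equiv 0$ on $Y$. With that fix the proof is complete.
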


\begin{proof}
The assertion is well known in case $Y$ is smooth (see, e.g., \cite[D.2.2]{Re93}).
In general, the assertion follows immediately by passing to the resolution of singularities.
\end{proof}

\begin{Lemma} \label{equality} 
{Let $Y$ be the localization algebraic surface at a closed point $y\in Y$.}
Let $f: (\tilde Y, C)\to (Y,y)$ be a resolution of a rational surface singularity and let $L_1$ and $L_2$ be line bundles  on $\tilde Y$. If for every irreducible component $C_i$ of $C$ we have $L_1\cdot C_i=L_2\cdot C_i$, then $L_1$ and $L_2$
are isomorphic. In particular, we have $(f_*L_1)^{**}\simeq (f_*L_2)^{**}$.
\end{Lemma}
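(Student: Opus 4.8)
The plan is to reduce the statement to the triviality of the line bundle $M:=L_1\otimes L_2^{-1}$ on $\tilde Y$; once $M\cong\cO_{\tilde Y}$, i.e. $L_1\cong L_2$, the final assertion $(f_*L_1)^{**}\simeq(f_*L_2)^{**}$ follows at once. By hypothesis $M\cdot C_i=0$ for every irreducible component $C_i$ of $C$. Recall that $(Y,y)$ being a rational singularity means $R^1f_*\cO_{\tilde Y}=0$, which forces the exceptional divisor $C=\bigcup_iC_i$ to be a tree of smooth rational curves (in particular each $C_i\cong\PP^1$ and $H_1(C,\ZZ)=0$) and which, since $Y$ is affine, gives $H^1(\tilde Y,\cO_{\tilde Y})=0$; the same vanishing holds for the analytic germ $\tilde Y^{\an}$ of the resolution along $C$.

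The heart of the matter is that the homomorphism $\Pic(\tilde Y)\to\bigoplus_i\ZZ$, $N\mapsto(N\cdot C_i)_i$, is injective. I would see this on $\tilde Y^{\an}$: the exponential sequence $0\to\ZZ\to\cO_{\tilde Y^{\an}}\to\cO_{\tilde Y^{\an}}^{*}\to 0$ together with $H^1(\cO_{\tilde Y^{\an}})=0$ yields an injection $c_1\colon\Pic(\tilde Y^{\an})\hookrightarrow H^2(\tilde Y^{\an},\ZZ)$; since $\tilde Y^{\an}$ deformation retracts onto the tree of $\PP^1$'s $C$, one has $H_1(\tilde Y^{\an},\ZZ)=0$ and $H_2(\tilde Y^{\an},\ZZ)=\bigoplus_i\ZZ[C_i]$, so the evaluation pairing identifies $H^2(\tilde Y^{\an},\ZZ)$ with $\bigoplus_i\ZZ$ in such a way that $c_1(N)$ goes to $(\langle c_1(N),[C_i]\rangle)_i=(N\cdot C_i)_i$. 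Applying this to $N=M$ shows $M^{\an}\cong\cO_{\tilde Y^{\an}}$, hence $M$ is trivial on every infinitesimal thickening of $C$ in $\tilde Y$; by Grothendieck's existence theorem for $\tilde Y\times_Y\operatorname{Spec}\widehat{\cO}_{Y,y}\to\operatorname{Spec}\widehat{\cO}_{Y,y}$ this gives $M\otimes\widehat{\cO}_{Y,y}\cong\cO$, and a short faithfully flat descent along $\cO_{Y,y}\to\widehat{\cO}_{Y,y}$ (the finite module $\Hom_{\cO_{\tilde Y}}(L_1,L_2)$ becomes free of rank one, hence is already so, and its pairing with $\Hom_{\cO_{\tilde Y}}(L_2,L_1)$ into $\Hom_{\cO_{\tilde Y}}(L_1,L_1)=\cO_{Y,y}$ becomes surjective) then yields $L_1\cong L_2$ over $\cO_{Y,y}$ itself.

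The step that genuinely uses the hypothesis is the vanishing $H^1(\cO_{\tilde Y^{\an}})=0$ together with the rationality of the $C_i$: the statement is false without it, for instance at a simple elliptic singularity, where $H^1(\cO_{\tilde Y})\neq0$ and the Jacobian of the central elliptic curve pulls back to a nontrivial family of numerically $f$-trivial line bundles. I expect this — together with the bookkeeping involved in relating the local algebraic surface, its completion, and its analytic germ — to be the only delicate point; the Picard-group computations themselves are routine. If one prefers to stay algebraic throughout, one can instead observe that $M|_C\cong\cO_C$ (the degree map $\Pic(C)\to\bigoplus_i\ZZ$ is an isomorphism for a tree of $\PP^1$'s) and that this triviality lifts step by step along the thickenings $Z_n$ of $C$, the obstruction to the $n$-th lift lying in a group $H^1(\tilde Y,\mathcal G_n)$ with $\mathcal G_n$ a coherent sheaf supported on $C$ whose vanishing is a standard consequence of rationality by Artin's criteria, and then conclude via Grothendieck existence and descent as before.
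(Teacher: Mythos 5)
Your proposal is correct, and it opens with exactly the same reduction as the paper: pass to $M=L_1\otimes L_2^{-1}$, note $M\cdot C_i=0$ for all $i$, and show $M\simeq\cO_{\tilde Y}$. Where the two part ways is in how this last step is handled. The paper disposes of it in one line by invoking Artin's Corollary 2.6 in \cite{Ar62} (checking that hypothesis (a) of Artin's Theorem 2.3 holds automatically for a rational singularity), whereas you reconstruct a proof of that statement from scratch. Your primary route is transcendental: use rationality to get $H^1(\tilde Y,\cO_{\tilde Y})=0$ and that the exceptional fibre is a tree of $\PP^1$'s, pass to an analytic neighbourhood of $C$, run the exponential sequence to embed the analytic Picard group into $H^2(\tilde Y^{\an},\ZZ)\cong\bigoplus_i\ZZ$, and then come back to the algebraic category via Grothendieck existence over $\widehat{\cO}_{Y,y}$ and faithfully flat descent. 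Your secondary, ``stay-algebraic'' sketch (trivialize $M|_C$, lift through the thickenings $Z_n$ using the $H^1$-vanishing supplied by rationality, then formal functions and descent) is in substance Artin's own argument. Both routes are sound; what Artin's citation buys the paper is brevity, and what your analytic route buys is a proof with no reference. The one place that genuinely needs care in your version is exactly the point you flag yourself, namely the passage analytic $\to$ formal $\to$ local algebraic: the cleanest bookkeeping is probably to note that $f_*\cHom(L_1,L_2)$ and $f_*\cHom(L_2,L_1)$ are finite $\cO_{Y,y}$-modules whose completions are free of rank one, hence so are they, and then that the image of the composition pairing into $f_*\cO_{\tilde Y}=\cO_{Y,y}$ contains a unit since it does after completion; a single check that the product of generators is a unit then yields the isomorphism. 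No gap, but worth making explicit if you wanted to turn this into a full write-up.
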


\begin{proof}
Our assumptions imply that $(L_1\otimes L_2^{-1})\cdot C_i=0$ for every irreducible component $C_i$ of $C$. So by 
 \cite[Corollary 2.6]{Ar62} we have $L_1\otimes L_2^{-1}\simeq \cO_{\tilde Y}$ (the assumptions of this corollary are satisfied, as the condition (a) of  \cite[Theorem 2.3]{Ar62} holds trivially for rational singularities).
\end{proof}

\subsubsection{Cyclic quotient singularities}\label{cyclic}

Let $(Y, y)$ be a cyclic quotient singularity of type  $\frac{1}{n}(1,q)$ for some relatively prime positive integers $n\ge 2$ and $1\le q<n$. So locally analytically $(Y, y)=(\CC ^2, 0)/G$ and $G=\left\langle \left(
\begin{array}{cc}
\epsilon&0\\
0&\epsilon^q\\
\end{array}
\right)\right\rangle $, where $\epsilon$ is a primitive $n$-th root of $1$. Let $\mu : \CC^2\to Y$ be the quotient map
and let $f: (X,C)\to (Y,y)$ be the minimal resolution of $(Y,y)$. The exceptional divisor $C=\bigcup _{j=1}^r C_j$
is a Hirzebruch--Jung string, i.e., it consists of smooth rational curves $C_i$ such that $C_i^2\le -2$, $C_i\cdot C_j=1$
if $|i-j|=1$ and $C_i\cdot C_j=0$ if $|i-j|>1$. The irreducible representations of $G$ are given by the characters $\chi_i$ defined by sending the chosen generator of $G$ to $\epsilon ^i$ for $i=0,...,n-1$. 
Each character $\chi_i: G\to \CC^*=\GL (1,V)$ gives rise to a reflexive sheaf $\cN_i=(\mu _*\cO_{\CC^2}\otimes_{\CC} V )^G$, where $G$ acts on $\mu _*\cO_{\CC^2}\otimes  _{\CC} V $ by $g (f\otimes v)= g(f)\otimes (\chi_i (g)) (v)$. Another choice is to consider sheaves $\cL_i$ defined as the $i$-th eigensheaves of the action of $G$ on $\mu_*\cO_{\CC^2}$ (cf. \cite[8.3]{Re85}), i.e., $\cL_i$ is a subsheaf of
$\mu_*\cO_{\CC^2}$ formed by sections $f$ on which the generator of $G$ acts by multiplication by $\epsilon ^i$. 
These two choices are related by equality $\cN_i=\cL_{n-i}=\cL _i^*$.

Let us write the continued fraction expansion 
$$\frac{n}{q}=b_1-\frac{1}{b_2-\frac{1}{\cdots -\frac{1}{b_r}}},$$
where $b_i$ are integers $\ge 2$. It is well known that $C_j^2=-b_j$ for $j=1,...,r$.

The following theorem is the main result of \cite{Wu85}.

\begin{Theorem}\label{Wunram}
Let $s_0,...,s_r$ be positive integers defined recursively by
$s_0:=n$, $s_1=q$ and $s_{j}:=b_{j-1}s_{j-1}-s_{j-2}$
for  $2\le j\le r$. For every $0\le i\le n-1$ there exist uniquely defined non-negative integers $d_1$,..., $d_r$ such that
\begin{align*}
i=d_1s_1+t_1,& \quad 0\le t_1<s_1,\\
t_j=d_{j+1}s_{j+1} +t_{j+1}, &\quad 0\le t_{j+1}<s_{j+1}, \quad 1\le j\le r-1
\end{align*}
Let $\cM_i=f^*\cN_i/\torsion$. Then $\cM_i$ is a line bundle with $\cM_i\cdot C_j=d_j$ for  $j=1$, ..., $r$.
\end{Theorem}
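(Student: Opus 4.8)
My approach is to realize $\cN_i$ and $\cM_i$ explicitly on the minimal (= toric) resolution and then to reduce the assertion to an arithmetic identity about continued fractions. Since the statement is local at $y$, take $(Y,y)$ to be the affine toric surface attached to the cone $\sigma=\RR_{\ge 0}v_0+\RR_{\ge 0}v_{r+1}$ in a lattice $N\cong\ZZ^2$, with $\mu\colon\CC^2\to Y$ the quotient by $G=N/N'$ for the index-$n$ sublattice $N'$. The minimal resolution $f\colon X\to Y$ is the toric resolution: its rays are generated by the lattice points $v_0,v_1,\dots,v_{r+1}$ on the Newton boundary of $\sigma$, one has $v_{j-1}+v_{j+1}=b_jv_j$, and the torus-invariant prime divisors are $C_j=V(v_j)$ for $1\le j\le r$ together with the strict transforms $E_0=V(v_0)$, $E_{r+1}=V(v_{r+1})$ of the two boundary divisors of $Y$. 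Both sequences of the statement live on this chain: the linear form $m_s\in M:=N^\vee$ with $m_s(v_0)=n$, $m_s(v_1)=q$ satisfies $m_s(v_j)=s_j$ (the recursion for the $v_j$ transfers to $s_{j-1}+s_{j+1}=b_js_j$), so $s_{r+1}=0$, $s_r=1$ and $s_0>s_1>\dots>s_r$ by downward induction; this strict monotonicity is precisely what makes the expansion $i=\sum_{k\ge 1}d_ks_k$ with remainders $0\le t_j=\sum_{k>j}d_ks_k<s_j$ uniquely solvable. Dually, writing $\overline{E}_{r+1}$ for the boundary divisor $V_Y(v_{r+1})$ of $Y$, the Mumford pull-back $f^*\overline{E}_{r+1}$ equals $\sum_{k=1}^{r}\tfrac{\rho_k}{n}C_k+E_{r+1}$, where $\rho_0=0$, $\rho_1=1$, $\rho_{k+1}=b_k\rho_k-\rho_{k-1}$ (so $\rho_{r+1}=n$); this follows from $(f^*\overline{E}_{r+1})\cdot C_j=0$ and the negative definiteness of $(C_i\cdot C_j)$.

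\emph{The sheaves.} The class group $\mathrm{Cl}(Y)$ is cyclic of order $n$, and the reflexive eigensheaves $\cL_0,\dots,\cL_{n-1}$ represent all of its elements; in particular $\cN_i=\cL_{n-i}\cong\cO_Y(i\,\overline{E}_{r+1})$ for a suitable generator $\overline{E}_{r+1}$ of $\mathrm{Cl}(Y)$. Being a finitely generated module over the affine ring of $Y$, $\cN_i$ is globally generated, hence so is $\cM_i=f^*\cN_i/\torsion$. Realizing $\cO_Y(i\,\overline{E}_{r+1})$ as a divisorial (monomial) fractional ideal and pulling back chart by chart, the torsion-free quotient $\cM_i$ is the monomial submodule generated by the images of the generators; these saturate, so $\cM_i=\cO_X\big(\lfloor if^*\overline{E}_{r+1}\rfloor\big)$ (round-down of the Mumford pull-back). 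In particular $\cM_i$ is a line bundle ($X$ is smooth, so every Weil divisor is Cartier), and by the first paragraph
\[
\lfloor if^*\overline{E}_{r+1}\rfloor=if^*\overline{E}_{r+1}-\sum_{k=1}^{r}\{i\rho_k/n\}\,C_k ,
\]
the fractional parts for $k=0,r+1$ dropping out since $\rho_0=0$, $\rho_{r+1}=n$.

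\emph{Intersection numbers.} Using $(f^*\overline{E}_{r+1})\cdot C_j=0$, $C_j^2=-b_j$ and $\rho_{j-1}-b_j\rho_j+\rho_{j+1}=0$, one obtains for $1\le j\le r$
\[
\cM_i\cdot C_j=-\big(\{i\rho_{j-1}/n\}-b_j\{i\rho_j/n\}+\{i\rho_{j+1}/n\}\big)=\lfloor i\rho_{j-1}/n\rfloor-b_j\lfloor i\rho_j/n\rfloor+\lfloor i\rho_{j+1}/n\rfloor ,
\]
a non-negative integer since $\cM_i$ is globally generated. The theorem is therefore equivalent to the arithmetic identity
\[
\lfloor i\rho_{j-1}/n\rfloor-b_j\lfloor i\rho_j/n\rfloor+\lfloor i\rho_{j+1}/n\rfloor=d_j\qquad(1\le j\le r).
\]
The key input is the Wronskian relation $\rho_js_{j+1}-\rho_{j+1}s_j=-n$ (immediate from the two recursions and the initial values), which rewrites $\lfloor i\rho_j/n\rfloor$ in terms of the greedy remainders $t_j$ and collapses the alternating sum to $d_j$. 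As a consistency check, the $s_j$-weighted sum $\sum_{j=1}^{r}(\cM_i\cdot C_j)s_j$ telescopes, again via $\rho_js_{j+1}-\rho_{j+1}s_j=-n$ and $\rho_1=1$, to $n\{i/n\}=i$.

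\emph{Main obstacle.} The toric set-up and the description of $\cM_i$ are bookkeeping with standard facts; the real content is the last displayed identity, i.e.\ that the discrete second difference of $k\mapsto\lfloor i\rho_k/n\rfloor$ reproduces exactly the mixed-radix digit $d_j$ of $i$ with respect to the decreasing weights $(s_k)$. Ensuring that the bounds $0\le t_j<s_j$ are precisely what forces the correct rounding at every step, uniformly in $j$, is the delicate point. A way to avoid the global toric language, closer to the argument of \cite{Wu85}, is to run the computation chart by chart on $X$ in Hirzebruch–Jung coordinates: on the $j$-th chart the generators of $\cN_i$ become explicit Laurent monomials, the order of vanishing of $\cM_i$ along $C_j$ is read off directly (which also makes local freeness transparent), and one is again reduced to the recursion $s_{k+1}=b_ks_k-s_{k-1}$.
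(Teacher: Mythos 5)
The paper offers no proof of this statement; it is quoted directly as the main result of \cite{Wu85}. So there is no argument in the paper to compare against, and your toric sketch has to be judged on its own terms.

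The reductions you perform are correct as far as they go: the toric description of the minimal resolution, the recursion $s_{j-1}+s_{j+1}=b_js_j$ forcing $n=s_0>s_1>\cdots>s_r=1>s_{r+1}=0$, the Mumford pull-back $f^*\overline E_{r+1}=\sum_k\tfrac{\rho_k}{n}C_k+E_{r+1}$, and the rewriting $\cM_i\cdot C_j=\lfloor i\rho_{j-1}/n\rfloor-b_j\lfloor i\rho_j/n\rfloor+\lfloor i\rho_{j+1}/n\rfloor$ once one grants that $\cM_i=\cO_X(\lfloor if^*\overline E_{r+1}\rfloor)$. There are, however, two genuine gaps. The first you pass over: the phrase ``these saturate, so $\cM_i=\cO_X(\lfloor if^*\overline E_{r+1}\rfloor)$'' is precisely the assertion that $\cM_i$ is a line bundle, and it is not a formality. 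For a general rational surface singularity the torsion-free pull-back of a reflexive rank-one module to the minimal resolution acquires an ideal sheaf of embedded points; that this does not happen here is a special feature of the cyclic case that has to be proved (it is part of what Wunram actually proves). Concretely, one must show that on each chart $U_j$ of $X$ the monomial generators of $\cN_i$ include one $\chi^m$ realizing both corner values $\langle m,v_{j-1}\rangle=-\lfloor i\rho_{j-1}/n\rfloor$ and $\langle m,v_j\rangle=-\lfloor i\rho_j/n\rfloor$, which amounts to the nonnegativity of $s_{j-1}\{i\rho_j/n\}-s_j\{i\rho_{j-1}/n\}$ and of $\rho_j\{i\rho_{j-1}/n\}-\rho_{j-1}\{i\rho_j/n\}$; verifying this already requires control of the remainders $i\rho_j\bmod n$. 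The second gap you flag yourself: the identity $\lfloor i\rho_{j-1}/n\rfloor-b_j\lfloor i\rho_j/n\rfloor+\lfloor i\rho_{j+1}/n\rfloor=d_j$ is claimed to ``collapse'' from the Wronskian $s_{j-1}\rho_j-s_j\rho_{j-1}=n$, but no computation is supplied, and the weighted consistency check $\sum_j(\cM_i\cdot C_j)s_j=i$ is no substitute, since a representation $i=\sum e_js_j$ with $e_j\ge 0$ is far from unique without the tail bounds $\sum_{k>j}e_ks_k<s_j$. The two gaps are in fact intertwined — both come down to identifying $i\rho_j\bmod n$ with $s_j\sum_{k\le j}d_k\rho_k+t_j\rho_j$ and showing that this quantity lies in $[0,n)$ — and both are fillable, but as written the proposal is a plan of attack rather than a proof.
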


 \subsubsection{Riemann--Roch theorem on normal surfaces}\label{s-RR}

 \begin{Theorem}\label{RR}
 Let $Y$ be a normal complete surface and let $D$ be a Weil divisor on $Y$. Then there is a formula
 $$\chi(Y, \cO _Y(D))=\frac{1}{2} (D^2-K_Y\cdot D)+\chi (Y,\cO_Y)+\sum _{y\in \Sing Y}a(y, D),$$
 where $a(y, D)$ is a local contribution of $\cO_Y (D)$ at $y$ depending only on the local isomorphism class of the reflexive sheaf $\cO_Y(D)$ at $y$.
 \end{Theorem}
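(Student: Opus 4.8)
The plan is to reduce the statement to the classical Riemann--Roch theorem on a smooth complete surface by passing to a resolution, and to read $a(y,D)$ off the exceptional correction terms. Fix the \emph{minimal} resolution $f\colon X\to Y$; using the minimal resolution (rather than an arbitrary one) matters only at the end, because its germ over a neighbourhood of each point of $Y$ depends only on the corresponding germ of $Y$, which will make the resulting local contributions manifestly intrinsic. Write $\lceil f^{*}D\rceil=f^{*}D+F$, where $f^{*}D$ is Mumford's pullback and $F=\lceil f^{*}D\rceil-f^{*}D\ge 0$ is an effective $f$-exceptional $\QQ$-divisor. I need two facts about this set-up: (1) $f_{*}\cO_{X}(\lceil f^{*}D\rceil)=\cO_{Y}(D)$, and in particular $f_{*}\cO_{X}=\cO_{Y}$; (2) $R^{1}f_{*}\cO_{X}(\lceil f^{*}D\rceil)$ and $R^{1}f_{*}\cO_{X}$ are coherent sheaves supported on the finite set $\Sing Y$, while $R^{q}f_{*}=0$ for $q\ge 2$. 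For (1) one observes that for a rational function $\varphi$ on $Y=X$ the principal divisor $(\varphi)$ on $X$ equals $f^{*}$ of $(\varphi)$ on $Y$ (both are orthogonal to every exceptional curve and have the same pushforward), so $(\varphi)+\lceil f^{*}D\rceil\ge 0$ if and only if $f^{*}((\varphi)+D)\ge 0$ if and only if $(\varphi)+D\ge 0$, the last equivalence because $f_{*}f^{*}$ is the identity on Weil divisors and, by the negativity lemma, $f^{*}B\ge 0\iff B\ge 0$; this is precisely the identification of the spaces of sections defining these two reflexive sheaves. Fact (2) is standard for a birational morphism to a normal surface.

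Granting this, the Leray spectral sequence for $f$ gives
\begin{equation*}
\chi(Y,\cO_{Y}(D))=\chi\bigl(X,\cO_{X}(\lceil f^{*}D\rceil)\bigr)+\sum_{y\in\Sing Y}\dim_{\CC}\bigl(R^{1}f_{*}\cO_{X}(\lceil f^{*}D\rceil)\bigr)_{y},
\end{equation*}
and the same formula for $D=0$. Applying ordinary Riemann--Roch on the smooth complete surface $X$ to the line bundle $\cO_{X}(\lceil f^{*}D\rceil)=\cO_{X}(f^{*}D+F)$, and using $f^{*}D\cdot C_{i}=0$ for every $f$-exceptional curve $C_{i}$ together with $K_{X}=f^{*}K_{Y}+\Delta$ with $\Delta$ exceptional (so $\Delta\cdot f^{*}D=0$), one obtains $\lceil f^{*}D\rceil^{2}=D^{2}+F^{2}$ and $K_{X}\cdot\lceil f^{*}D\rceil=K_{Y}\cdot D+K_{X}\cdot F$, whence
\begin{equation*}
\chi\bigl(X,\cO_{X}(\lceil f^{*}D\rceil)\bigr)=\chi(X,\cO_{X})+\tfrac12\bigl(D^{2}-K_{Y}\cdot D\bigr)+\tfrac12\bigl(F^{2}-K_{X}\cdot F\bigr).
\end{equation*}
Since $F=\sum_{y}F_{y}$ splits as a sum over the pairwise disjoint fibres $f^{-1}(y)$, the quantity $\tfrac12(F^{2}-K_{X}\cdot F)=\sum_{y}\tfrac12(F_{y}^{2}-K_{X}\cdot F_{y})$ is a sum of local terms. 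Substituting, eliminating $\chi(X,\cO_{X})$ via the $D=0$ Leray formula, and collecting terms point by point, one gets the desired identity with
\begin{equation*}
a(y,D)=\tfrac12\bigl(F_{y}^{2}-K_{X}\cdot F_{y}\bigr)+\dim_{\CC}\bigl(R^{1}f_{*}\cO_{X}(\lceil f^{*}D\rceil)\bigr)_{y}-\dim_{\CC}\bigl(R^{1}f_{*}\cO_{X}\bigr)_{y}.
\end{equation*}

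Finally I must check that $a(y,D)$ depends only on the local isomorphism class of $\cO_{Y}(D)$ at $y$. Each of the three terms is computed from the restriction of $f$ to the preimage of a neighbourhood $U$ of $y$ and from $\lceil f^{*}D\rceil$ near $f^{-1}(y)$. If $\cO_{Y}(D)\cong\cO_{Y}(D')$ on $U$, then $D'=D+(\psi)$ on $U$ for some rational function $\psi$, so on $f^{-1}(U)$ we have $f^{*}D'=f^{*}D+(\psi\circ f)$ with $(\psi\circ f)$ an \emph{integral} divisor; hence $F$ is unchanged, $\cO_{X}(\lceil f^{*}D'\rceil)\cong\cO_{X}(\lceil f^{*}D\rceil)$ over $f^{-1}(U)$, and therefore the stalks of $R^{1}f_{*}$ at $y$ and the intersection numbers $F_{y}^{2}$, $K_{X}\cdot F_{y}$ are unchanged. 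Since we used the minimal resolution, whose germ over $U$ is determined by the germ $(Y,y)$, this shows $a(y,D)$ is intrinsic to the germ of the reflexive sheaf $\cO_{Y}(D)$ at $y$. The argument is essentially bookkeeping once fact~(1) is in place; the only genuinely technical point is indeed (1)---that $f_{*}\cO_{X}(\lceil f^{*}D\rceil)$ is again the reflexive sheaf $\cO_{Y}(D)$ and not something larger---which is where normality of $Y$, via the negativity lemma, is essential.
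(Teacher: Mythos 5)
Your reduction---minimal resolution $f\colon X\to Y$, Leray, classical Riemann--Roch on $X$, and a point-by-point split of the exceptional correction---is exactly the argument underlying the paper's citation of \cite{La00}: your final expression for $a(y,D)$ is formula (\ref{contribution-formula}) specialized to $\tilde D=\lceil f^*D\rceil$, for which $f_*\cO_X(\tilde D)=\cO_Y(D)$ and hence the term $\dim(\cO_Y(D)/f_*\cO_X(\tilde D))_y$ in $\chi(y,\cO_X(\tilde D))$ vanishes. So the approach and the resulting local contribution agree with the paper's.

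There is, however, a slip in your justification of fact (1). You assert $(\varphi)+\lceil f^*D\rceil\ge 0\iff(\varphi)+f^*D=f^*((\varphi)+D)\ge 0$ as though it were immediate from $(\varphi)_X=f^*(\varphi)_Y$. But the integrality of $(\varphi)$ gives $(\varphi)+f^*D\ge 0\iff(\varphi)+\lfloor f^*D\rfloor\ge 0$, with the round-\emph{down}, not the round-up: for a $\QQ$-coefficient $x_i$ and an integer $a_i$, $a_i+x_i\ge 0$ is equivalent to $a_i+\lfloor x_i\rfloor\ge 0$, whereas $a_i+\lceil x_i\rceil\ge 0$ is a strictly weaker condition when $x_i\notin\ZZ$. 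The forward implication $(\varphi)+\lceil f^*D\rceil\ge 0\Rightarrow(\varphi)+f^*D\ge 0$ is therefore not formal; it does hold, but only because one can first push forward (coefficients of $f_*$ of an effective divisor are nonnegative) to get $(\varphi)_Y+D\ge 0$ on $Y$, and then pull back via the negativity lemma---i.e.\ it uses the very implication you place later in your chain. Nothing downstream breaks, since the equality $f_*\cO_X(\lceil f^*D\rceil)=\cO_Y(D)$ is true, but as written the first equivalence begs the question. The cleanest repair is to take $\tilde D=\lfloor f^*D\rfloor$ throughout: then fact (1) really is a one-line consequence of integrality of principal divisors, and the resulting $F=\lfloor f^*D\rfloor-f^*D\le 0$ causes no harm in the numerical bookkeeping (nowhere did you actually use $F\ge 0$). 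Alternatively, keep $\lceil\cdot\rceil$ but replace the first ``iff'' by the two one-way implications, inserting the pushforward step where it is needed.
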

 
In case $Y$ has only quotient singularities this follows from \cite[Corollary 8.6]{Re85}. In general, the theorem follows from \cite[Section 3]{La00}.  Although \cite{La00} assumes projectivity, none of the proofs use this assumption and the above theorem holds more generally for normal complete surfaces. Nevertheless, in all the cases that we use the above theorem the surface is in fact projective.
Let us recall that $a(y, D)$ can be computed in the following way. Let $f: (X, C)\to (Y,y)$ be any resolution of the normal surface singularity and let $\tilde D$ be any divisor such that $f_*\tilde D=D$.  Let $c_1(y, \tilde D)$ be the unique $\QQ$-divisor supported on the exceptional locus $C$ of $f$ such that $c_1(y,\tilde D)\cdot C_i=\deg \cO_{C_i}(\tilde D)$ for all exceptional curves $C_i$. {Let us set 
$$\chi (y, \cO_X (\tilde D))= \dim \, (\cO_Y(D)/f_*\cO_X(\tilde D))_y +\dim \, 
(R^1f_*\cO_X({\tilde D}))_y.$$
By \cite[Definition 2.7]{La00} and \cite[Proposition 2.8]{La00} we have
\begin{equation}\label{contribution-formula}
a(y, D)=\frac{1}{2}c_1(y,\tilde D)(c_1(y,\tilde D) -c_1(y, K_{X}))+ \chi (y, \cO_X (\tilde D))- \dim \, 
(R^1f_*\cO_X)_y.
\end{equation}}
In particular, if $D$ is a Cartier divisor at $y$ then $a(y, D)=0$.

\medskip

In case $(Y,y)$ is  a cyclic quotient singularity of type  $\frac{1}{n}(1,q)$, the local contributions $a(y, \cL_i)$ for reflexive sheaves of type $\cL_i$ were computed in  \cite[Example 5.6]{La00} and for $0\le i<n$ we have
$$a(y, \cL_i)=  \frac{1}{n}\left( \sum _{j=0}^{i-1} \overline{c j} -\frac{i (n-1)}{2}\right),$$
where $c$ denotes an integer such that $qc \equiv -1\mod n$ and $\overline x$ denotes the remainder from dividing $x$ by $n$. Let us remark that $\omega_Y$ is of type $\cL_{q+1}$, so by \cite[Proposition 2.10]{La00} we have
$$a(y, \cL_q) =a(y, \cL_1)=  -\frac{n-1}{2n}.$$

\subsubsection{Modified Euler characteristic}\label{mod-Euler}

Let $f: X\to Y$ be a proper birational morphism between normal complete surfaces.
Similarly as in Subsection \ref{s-RR}, for  a Weil divisor $\tilde D$ on $X$ and a point $y\in Y$ we can consider
$$\chi  (y, \cO_X (\tilde D))= \dim \, (\cO_Y(f_*\tilde D)/f_*\cO_X(\tilde D))_y +\dim \, 
(R^1f_*\cO_X({\tilde D}))_y.$$
For simplicity we omit $f$ in the notation as it is implicitly contained in the fact that $\tilde D$ is a divisor on $X$.
We also set 
$$\chi   (f, \cO_X (\tilde D))=\sum _{y\in Y}\chi (y,  \cO_X (\tilde D)).$$
Then the Leray spectral sequence implies the equality
$$\chi (X, \cO_X (\tilde D))=\chi (Y, \cO_Y (f_*\tilde D))-\chi   (f, \cO_X (\tilde D)).$$
Let  $f: X\to Y$ and $g:Y\to Z$ be proper birational morphisms between normal complete surfaces.
Applying the above equality to $f$, $g$ and $g\circ f$ we obtain
$$\chi (g\circ f, \cO_X (\tilde D))=\chi   (f, \cO_X (\tilde D)) + \chi (g, \cO_Y (f_*\tilde D)).$$
This equality has an obvious local analogue that  can be proven using the Leray spectral sequence:
{$$\chi (z, \cO_X (\tilde D))= \chi (z,  \cO_Y (f_*\tilde D))+\sum _{y\in f^{-1}(z)} \chi (y, \cO_X (\tilde D)) .$$}

\subsubsection{Adjoint linear systems on normal surfaces}\label{adjoint}

Let us recall the following special case of \cite[Theorem 0.1, (0.3.2), and Remark (3), p.~60]{La01}.

\begin{Theorem} \label{Reider-type}
Let $L$ be a pseudoeffective divisor on a normal complex projective surface $Y$. Let $L=P+N$ be the Zariski decomposition of $L$ and let $\zeta$ be a $0$-dimensional subscheme of $Y$ contained in its smooth locus. 
If $P^2>4 \deg {\zeta}$ and the restriction $H^0(Y, \cO_Y(K_Y+L))\to \cO_{\zeta} (K_Y+L)$
is not surjective then there exists a curve $C$ containing $\zeta$ and such that 
$P\cdot C\le 2 \deg {\zeta }$.
\end{Theorem}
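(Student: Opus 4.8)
The plan is to carry out a Reider--Bogomolov argument directly on the normal projective surface $Y$, using Mumford's intersection pairing and the Riemann--Roch formalism of \S\ref{s-RR} throughout; I would avoid passing to a resolution, since an arbitrary normal singularity of $Y$ need not interact well with $K_Y$. First, the failure of surjectivity of $H^0(Y,\cO_Y(K_Y+L))\to \cO_\zeta(K_Y+L)$ means that the connecting homomorphism of $0\to \cI_\zeta\otimes\cO_Y(K_Y+L)\to \cO_Y(K_Y+L)\to \cO_\zeta(K_Y+L)\to 0$ is nonzero, so $H^1(Y,\cI_\zeta\otimes\cO_Y(K_Y+L))\neq 0$. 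Since a normal surface is Cohen--Macaulay with dualizing sheaf $\cO_Y(K_Y)$, Serre duality identifies this group with $\Ext^1_Y(\cI_\zeta\otimes\cO_Y(L),\cO_Y)^{\vee}$; as $\zeta$ lies in the smooth locus it is a local complete intersection, and via the local-to-global $\Ext$ spectral sequence a suitable class produces a non-split extension
\[ 0\to \cO_Y\to \cE\to \cI_\zeta\otimes\cO_Y(L)\to 0 \]
with $\cE$ a rank-$2$ reflexive sheaf, $c_1(\cE)=L$ and $c_2(\cE)=\deg\zeta$ --- passing to the reflexive hull and to the Cayley--Bacharach locus if needed, and tracking the local Chern-class corrections of \S\ref{s-RR} at $\Sing Y$, which do not affect the estimates below.

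Next I would run a Bogomolov-type dichotomy \emph{relative to the positive part} $P$. Since $P^{2}>4\deg\zeta\ge 0$, the nef divisor $P$ is also big, so slope stability with respect to $P$ is meaningful. The crucial input is a Bogomolov-type instability theorem on normal surfaces that is sensitive to the Zariski decomposition: the hypothesis $P^{2}>4\,c_2(\cE)$ --- rather than the weaker $c_1(\cE)^{2}>4\,c_2(\cE)$ --- should force $\cE$ to be $\mu_P$-unstable, i.e.\ to contain a saturated rank-$1$ subsheaf $\cO_Y(M)\subset\cE$ whose associated class $A:=2M-c_1(\cE)$ satisfies $A\cdot P>0$ and $A^{2}\ge P^{2}-4\,c_2(\cE)>0$. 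Granting this, the composite $\cO_Y(M)\hookrightarrow\cE\to\cI_\zeta\otimes\cO_Y(L)$ is nonzero: otherwise $\cO_Y(M)\subseteq\cO_Y$, so $-M$ is effective and $A\cdot P=2M\cdot P-L\cdot P\le -P^{2}<0$, a contradiction. Hence $|L-M|$ contains an effective divisor $C$ with $\zeta\subset C$.

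It then remains to estimate $P\cdot C$. Since $P\cdot N=0$ we have $P\cdot L=P^{2}$, and $L-A=2(L-M)$ is numerically equivalent to $2C$, so $2\,(P\cdot C)=P\cdot(L-A)=P^{2}-P\cdot A$. Using $A^{2}\ge P^{2}-4\deg\zeta>0$ together with the Hodge index inequality of Lemma~\ref{HIT} applied to $P$ and $A$ (legitimate because $P^{2}>0$) we obtain $(P\cdot A)^{2}\ge P^{2}A^{2}\ge (P^{2}-4\deg\zeta)^{2}$, hence $P\cdot A\ge P^{2}-4\deg\zeta$ since both sides are positive. Therefore $2\,(P\cdot C)\le 4\deg\zeta$, i.e.\ $P\cdot C\le 2\deg\zeta$, as desired.

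The step I expect to be the main obstacle is the Bogomolov-type instability theorem used above. The classical Bogomolov inequality --- and even its extension to semistability with respect to nef and big classes, which does hold in this generality --- yields instability only under the stronger hypothesis $c_1(\cE)^{2}>4\,c_2(\cE)$, i.e.\ $L^{2}>4\deg\zeta$; but $L^{2}=P^{2}+N^{2}\le P^{2}$, so this is strictly less than the available information $P^{2}>4\deg\zeta$. One therefore needs an instability criterion that genuinely exploits the negative part $N$ (its intersection matrix being negative definite) in order to upgrade the discriminant estimate to $A^{2}\ge P^{2}-4\,c_2(\cE)$; and since $Y$ is only normal, proving such a criterion forces one to control the Riemann--Roch corrections $a(y,D)$ at the singular points recalled in \S\ref{s-RR}. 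By contrast, the remaining ingredients --- the extension/sheaf dictionary via Serre duality on the Cohen--Macaulay surface $Y$, the reflexive-hull and Cayley--Bacharach bookkeeping, and the concluding Hodge index computation --- are routine once that criterion is available.
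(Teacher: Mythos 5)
The paper itself does not prove Theorem \ref{Reider-type} at all: it is stated as a special case of \cite[Theorem 0.1, (0.3.2), and Remark (3), p.~60]{La01} and is cited without argument, with only the remark that the proof in \cite{La01} uses ample divisors and hence requires $Y$ to be projective. So any ``proof'' here is necessarily a reconstruction of Langer's argument, not a comparison with something the paper supplies.

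Your reconstruction has the right architecture --- produce an extension $0\to\cO_Y\to\cE\to\cI_\zeta\otimes\cO_Y(L)\to 0$ from the non-surjectivity, destabilize $\cE$, extract the curve $C$ from the destabilizing sub-line-bundle, and close with Hodge index --- and your concluding Hodge index computation is correct as written. But you have explicitly flagged, and left unproved, the one step that carries the entire theorem: the Bogomolov-type instability statement asserting that $P^2>4\,c_2(\cE)$ (with $P$ the positive part of the Zariski decomposition of $c_1(\cE)$, rather than $c_1(\cE)$ itself) forces $\mu_P$-instability with a destabilizing class $A$ satisfying $A\cdot P>0$ and $A^2\ge P^2-4\,c_2(\cE)$. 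That is exactly the technical heart of \cite{La01}, and without it you do not have a proof; you have correctly identified that the ordinary Bogomolov inequality (which uses $c_1^2$, and $c_1^2=L^2=P^2+N^2\le P^2$) is too weak, but you offer no route to the strengthened version. There are also secondary issues you gesture at but do not resolve: on a normal surface $\omega_Y=\cO_Y(K_Y)$ is generally only reflexive, not invertible, so the Serre-duality/extension step and the passage from $\Ext^1(\cI_\zeta\otimes\cO_Y(K_Y+L),\omega_Y)$ to $\Ext^1(\cI_\zeta\otimes\cO_Y(L),\cO_Y)$ needs care away from the Gorenstein locus, and the discriminant bookkeeping for a reflexive rank-$2$ sheaf on a singular surface involves precisely the local contributions $a(y,D)$ that you say ``do not affect the estimates'' without justification. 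In short: the shape of the argument matches Langer's, but the decisive instability lemma and the normal-surface technicalities are asserted rather than proved, so this does not yet constitute an independent proof of the cited theorem.
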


Let us remark that the proof of the above theorem uses existence of ample divisors on $Y$, so it is not sufficient to assume that $Y$ is a normal complete surface.

\subsection{Birational geometry of foliated surfaces}

We refer the reader to \cite{McQ08} and \cite{Br15} for a detailed account of results on the birational geometry of foliated surfaces. Unfortunately,  \cite{Br15}  deals only with smooth surfaces and  \cite{McQ08} does not contain the definitions that would suit our presentation, so we collect a few of the notations, definitions and results that will be most important for us.

A foliation on a normal surface $X$ is a rank $1$ saturated subsheaf $T _\cF$ of the tangent sheaf $T_X$. 
A singular point of a foliation is either a singular point of $X$ or a point at which the quotient $T_X/T_{\cF}$ is not locally free. 
Note that our definition implies that $\cF$ has only isolated singularities.

A foliated surface is a pair $(X, \cF)$ consisting of a normal surface $X$ and a foliation $\cF$.

Note that $T_X$ is reflexive as it is isomorphic to $\Hom _{\cO_X}(\Omega _X, \cO_X)$. Therefore $T_{\cF}$ is also reflexive and we can define the canonical divisor $K_{\cF}$ of the foliation as  a Weil divisor on $X$ satisfying $\cO_X(-K_{\cF})\simeq T_{\cF}$. In particular, we have $\cO_X(K_{\cF})\simeq T^*_{\cF}$.

If $f:Y\to X$ is a proper birational morphism of normal surfaces and $\cF$ is a foliation on $X$  then we can define 
the pull-back foliation  $f^*\cF$ as follows: let $U\subset X$ be the largest open subset such that $V:=f^{-1}(U)\to U$ is an isomorphism and set $f^*\cF$ to be the unique saturated subsheaf of $T_Y$ such that $(f^*\cF)|_V\cong \cF|_U\subset T_U$. 
If $\cG$ is a foliation on $Y$ then we can also consider the push-forward foliation $f_*\cG$ by taking the saturation of the image of the composition
$$f_* T_{\cG}\to f_*T_{Y}\to (f_*T_{Y})^{**}=T_X.$$
Let us note that $f^*f_*\cG=\cG$ and $f_*f^*\cF=\cF$.
These equalities follow from  the following easy lemma.

\begin{Lemma}
Let $X$ be a {normal} irreducible algebraic space of finite type over some field and let $\eta$ be the generic point  of $X$.
Let $\cF_1$ and $\cF_2$ be saturated subsheaves of a torsion free coherent sheaf $\cE$ on $X$. If 
$(\cF_1)_{\eta}=(\cF_2)_{\eta}\subset \cE _{\eta}$ then $\cF_1=\cF_2\subset \cE$.
\end{Lemma}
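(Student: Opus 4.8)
The plan is to prove $\cF_1\subseteq\cF_2$; since the hypotheses are symmetric in $\cF_1$ and $\cF_2$ (each is assumed saturated in $\cE$), the reverse inclusion follows in the same way, whence $\cF_1=\cF_2$. As $X$ is Noetherian, $\cF_0:=\cF_1\cap\cF_2$ (the intersection taken inside $\cE$) is again a coherent subsheaf of $\cE$, and since passing to the stalk at a point is exact we get $(\cF_0)_\eta=(\cF_1)_\eta\cap(\cF_2)_\eta=(\cF_1)_\eta$. The composite $\cF_1\hookrightarrow\cE\to\cE/\cF_2$ has kernel $\cF_1\cap\cF_2=\cF_0$, so $\cF_1/\cF_0$ is a coherent subsheaf of $\cE/\cF_2$, which is torsion free because $\cF_2$ is saturated; moreover $(\cF_1/\cF_0)_\eta=(\cF_1)_\eta/(\cF_0)_\eta=0$. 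Thus everything reduces to the elementary fact that a coherent subsheaf of a torsion free coherent sheaf on $X$ whose stalk at $\eta$ vanishes is zero.

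To prove this I would reduce to schemes. The property of being torsion free, the formation of kernels and quotients, the location of the generic point, and the vanishing of a stalk are all compatible with \'etale base change, so it suffices to check the vanishing after pulling back along an \'etale surjection $U\to X$ with $U$ a scheme, and then, after shrinking $U$ to a connected affine open (which is integral, since $U$, being \'etale over $X$, is normal), we may assume $X=\operatorname{Spec}A$ with $A$ a Noetherian normal domain with fraction field $K$ and $\eta=(0)$. Our sheaf then corresponds to a submodule $Q$ of a torsion free $A$-module $G$ with $Q\otimes_AK=0$. Since $A$ is a domain the localization map $G\to G\otimes_AK$ is injective (its kernel is the torsion submodule of $G$), and the composite $Q\hookrightarrow G\to G\otimes_AK$ factors through $Q\otimes_AK=0$; being simultaneously injective and zero, it forces $Q=0$. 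Hence $\cF_1/\cF_0=0$, i.e. $\cF_1=\cF_0\subseteq\cF_2$, as wanted.

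I expect no genuine obstacle here: the mathematical content is simply that a torsion submodule of a torsion free module over a domain is zero. The only step requiring a little care is the reduction from the algebraic space $X$ to the integral affine schemes of an \'etale atlas --- in particular, checking that the generic point $\eta$ of $X$ pulls back to the generic points of the (integral) charts, so that the hypothesis $(\cF_1)_\eta=(\cF_2)_\eta$ is preserved, and that ``torsion free'' and ``saturated'' are \'etale local notions; these are standard facts.
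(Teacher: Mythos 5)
Your proposal is correct and takes essentially the same route as the paper: both arguments deduce $\cF_1\subset\cF_2$ by observing that the composite $\cF_1\hookrightarrow\cE\to\cE/\cF_2$ vanishes at the generic point and lands in a torsion-free sheaf, then conclude by symmetry. The only difference is that you spell out the reduction to integral affine charts via an \'etale atlas, whereas the paper leaves that routine verification implicit.
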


\begin{proof}
By assumption the canonical map $\cF_1\to \cE\to \cE/\cF_2$ is zero at the generic point $\eta$. Since both $\cF_1$ and $\cE/\cF_2$
are torsion free, this map is zero everywhere. Therefore $\cF_1\subset \cF_2\subset \cE$. Similarly,  $\cF_2\subset \cF_1\subset \cE$, which implies the required assertion.
\end{proof}

The following definition follows  \cite[I.1.2 and Fact I.2.15]{McQ08}.

\begin{Definition}  
Let $(X,\cF)$ be a foliated normal surface and let  $f:Y\to X$ be a proper birational morphism. For any divisor $E$ on $Y$ we define the {\it discrepancy of $\cF$ along $E$}
\[ a_E(\cF)={\rm ord}_E(K_{f^*\cF } -f^*K_\cF),\]
where $f^*K_{\cF}$ is defined as in \S \ref{Mumford}.
We say that $x$ is a {\it  canonical point} (resp. a {\it terminal point}) of $(X,\cF)$ if  $a_E(\cF)\geq 0$ (resp. $a_E(\cF)> 0$) for every divisor $E$ over $x$.
\end{Definition}

Note that unlike in the case of canonical singularities of normal surfaces, $K_{\cF}$ need not be $\QQ$-Gorenstein at a canonical point of $(X, \cF)$ and cusps provide examples where $K_{\cF}$ is not $\QQ$-Gorenstein (see \cite[Theorem 1, III.3.2]{McQ08}).

If $h^0(mK_\cF) >0$ for some $m>0$, then we let $\phi _m:X\dasharrow \mathbb P ^N$ be the $m$-th pluricanonical map defined by the sections of $H^0(mK_\cF)$.
The {\it Kodaira dimension} of $\cF$ is defined by
\[  {\rm kod}(\cF)=\kappa (K_\cF)={\rm max}\{\dim \phi _m (X)|m\in \mathbb N \}\]
where by convention we let ${\rm kod}(\cF)=-\infty $ if $h^0(mK_\cF)=0$ for all $m>0$.
We say that $\cF$ is of general type if $ {\rm kod}(\cF)=\dim X$. It is well known that $\cF$ is of general type if and only if $K_\cF$ is big or equivalently ${\rm vol}(K_\cF)=\lim _{m\to \infty} h^0(mK_\cF) d!/m^d >0$, where $d=\dim X$.

\medskip

\begin{Definition} A  foliated surface $(X,\cF )$ is called a canonical model if $\cF$ is a foliation with canonical singularities on a complete normal surface $X$,  $K_\cF$ is nef and $K_\cF\cdot C=0$ implies $C^2\geq 0$ for any irreducible curve.\end{Definition}

\begin{Lemma}\label{l-can}
Let $(Y, \cG)$ be a canonical model. 
If ${\rm kod} (\cG)=2$ then $K_{\cG}$ is numerically ample, i.e., $K_{\cG}^2>0$ and $K_{\cG}\cdot C>0$ for every irreducible curve $C$ on $Y$.
\end{Lemma}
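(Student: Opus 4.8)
The plan is to show that a canonical model $(Y,\cG)$ with $\kappa(\cG)=2$ has numerically ample $K_\cG$, using the Zariski decomposition of $K_\cG$ together with the defining property of canonical models (namely that $K_\cG\cdot C=0$ forces $C^2\ge 0$) and the Hodge index theorem (Lemma \ref{HIT}).

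First I would invoke projectivity: since $(Y,\cG)$ is a canonical model with $K_\cG$ nef, either $K_\cG$ is $\QQ$-Cartier and Corollary \ref{c-proj} applies, or one notes that the singularities of $Y$ underlying a canonical foliated surface are rational (by McQuillan's classification) and appeals to Theorem \ref{Artin}; in either case $Y$ is projective, so Lemma \ref{HIT} and the usual numerical machinery are available. Next, because $\kappa(\cG)=2$ means $K_\cG$ is big, it is in particular pseudoeffective, so it admits a Zariski decomposition $K_\cG=P+N$ with $P$ nef, $N\ge 0$ with negative-definite intersection matrix, and $P\cdot N_i=0$ for each component $N_i$ of $N$. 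Bigness of $K_\cG$ forces $P^2>0$: indeed $P$ is the positive (nef) part, $N$ is the negative part, and $\mathrm{vol}(K_\cG)=P^2$, which is positive since $\cG$ is of general type. Now $K_\cG$ itself is nef, so $K_\cG\cdot N_i\ge 0$ for every $i$; on the other hand $K_\cG\cdot N_i=(P+N)\cdot N_i=N\cdot N_i$, and summing $n_i(N\cdot N_i)$ over $i$ gives $N^2\ge 0$. Since the intersection matrix of $N$ is negative definite, this forces $N=0$, hence $K_\cG=P$ is nef with $K_\cG^2=P^2>0$.

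It remains to upgrade nefness and $K_\cG^2>0$ to strict positivity $K_\cG\cdot C>0$ on every irreducible curve $C$. Suppose $K_\cG\cdot C=0$ for some irreducible curve $C$. By the defining property of a canonical model, $C^2\ge 0$. Then consider the pair $(K_\cG, C)$: we have $K_\cG^2>0$, $K_\cG\cdot C=0$, and $C^2\ge 0$, so the matrix $\begin{pmatrix} K_\cG^2 & 0\\ 0 & C^2\end{pmatrix}$ has nonnegative determinant, and one of $(a_1 K_\cG+a_2 C)^2>0$ (e.g. taking $a_2=0$). Lemma \ref{HIT} then yields $K_\cG^2\cdot C^2\le (K_\cG\cdot C)^2=0$, so $C^2\le 0$, hence $C^2=0$, and equality in Lemma \ref{HIT} forces a nonzero linear combination $aK_\cG+bC\equiv 0$. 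Since $K_\cG^2>0$ while $C$ has $C^2=0$ and $K_\cG\cdot C=0$, pairing $aK_\cG+bC$ with $K_\cG$ gives $aK_\cG^2=0$, so $a=0$, whence $bC\equiv 0$ with $b\ne0$, i.e.\ $C\equiv 0$, which is absurd for an irreducible curve on a projective surface. This contradiction shows $K_\cG\cdot C>0$ for all irreducible $C$.

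The only genuinely delicate point I anticipate is the very first one: justifying that we may work on a projective surface and apply the Hodge index theorem and Zariski decomposition, since a priori $Y$ is only a complete algebraic space and $K_\cG$ may fail to be $\QQ$-Cartier (as the introduction stresses for cusp singularities). The resolution is that the relevant statements (Hodge index, existence of Zariski decomposition, Riemann--Roch) have all been set up in the Preliminaries for complete normal surfaces via passage to a projective resolution, and that the singularities occurring on a canonical foliated surface are rational by McQuillan's classification, so Theorem \ref{Artin} applies; modulo citing these, the argument above is a short formal deduction and I do not expect any further obstacle.
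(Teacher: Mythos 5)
Your proof is correct and takes essentially the same route as the paper: assume $K_\cG\cdot C=0$, apply the Hodge index theorem to deduce $C^2\le 0$, use the canonical-model condition to get $C^2=0$, and then use the equality case of Hodge index to reach a contradiction. The only difference is cosmetic: you derive $K_\cG^2>0$ via a Zariski-decomposition detour, whereas the paper simply uses (implicitly) that a nef and big divisor $D$ on a surface satisfies $D^2=\mathrm{vol}(D)>0$, which is shorter but amounts to the same thing.
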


\begin{proof}
Let us assume that $K_{\cG}\cdot C=0$ for some irreducible curve $C$.
By the Hodge index theorem (see Lemma \ref{HIT}) we have $$K_{\cG}^2 \, C^2\le (K_{\cG}\cdot C)^2=0.$$
Since $K_{\cG}^2>0$ this implies $C^2\le 0$. Since $(Y, \cG)$  is a canonical model, this implies $C^2=0$.
Then again by the Hodge index theorem the class of $C$ is proportional to the class of $K_{\cG}$. Since $K_{\cG}^2>0$ this implies that $C$ is numerically trivial, which gives the required contradiction.
\end{proof}

Let $X$ be a complete normal surface $X$ and let $\cF$ be a foliation with canonical singularities.
If $K_{\cF}$ is pseudoeffective then by \cite[Theorem 1, III.3.2]{McQ08} there exists a proper birational morphism $f: X\to Y$ to a normal complete surface $Y$ such that $(Y, \cG=f_*\cF)$ is a canonical model.

\begin{Lemma}\label{crepant}
Let $f: (X, \cF)\to (Y, \cG)$ be a proper birational morphism of complete foliated surfaces with canonical foliation singularities. If  $K_{\cF}$ is  nef  then $K_{\cG}$ is nef and $K_{\cF}=f^*K_{\cG}$.
\end{Lemma}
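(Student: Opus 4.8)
The plan is to use the definition of canonical singularities together with the negativity lemma (the Hodge-index-type argument appearing throughout \S\ref{Mumford}). First I would take a common resolution: let $g:Z\to X$ be a proper birational morphism from a smooth projective surface such that $h=f\circ g:Z\to Y$ is also a morphism, and set $\cH=g^*\cF=h^*\cG$ (these agree by the lemma identifying saturated subsheaves with the same generic stalk, since $g^*\cF$ and $h^*\cG$ both restrict to the given foliation over the common open locus where $Z\to X$ and $Z\to Y$ are isomorphisms). Write
\[
K_{\cH}=g^*K_{\cF}+\sum_E a_E(\cF)\,E, \qquad K_{\cH}=h^*K_{\cG}+\sum_E a_E(\cG)\,E,
\]
the sums running over the $g$-exceptional (resp.\ $h$-exceptional) prime divisors. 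Subtracting gives $g^*K_{\cF}-h^*K_{\cG}=\sum_E\bigl(a_E(\cG)-a_E(\cF)\bigr)E=:\Delta$, an $\RR$-divisor supported on the $h$-exceptional locus (every $g$-exceptional divisor is $h$-exceptional since $f$ is birational; and any divisor not $h$-exceptional has $a_E(\cG)=a_E(\cF)=0$ as it is not $g$-exceptional either, being a divisor on $X$ mapping to a divisor on $Y$).

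Next I would show $\Delta$ is anti-effective and $h$-nef, then conclude $\Delta=0$ by negativity. Since $(Y,\cG)$ has canonical foliation singularities, $a_E(\cG)\ge 0$ for every $h$-exceptional $E$; since $(X,\cF)$ has canonical foliation singularities, $a_E(\cF)\ge 0$ as well, but more to the point I want the opposite sign, so instead I compare against $K_{\cF}=g_*K_{\cH}$ directly: as $K_{\cF}$ is nef and $g^*$ of a nef divisor is nef, $g^*K_{\cF}$ is nef, hence $h$-nef. On the other hand $h^*K_{\cG}$ has zero intersection with every $h$-exceptional curve by the defining property of the Mumford pull-back in \S\ref{Mumford}. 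Therefore $\Delta\cdot E'=g^*K_{\cF}\cdot E'\ge 0$ for every $h$-exceptional curve $E'$, i.e.\ $-\Delta$ is $h$-nef. Now $\Delta$ is $h$-exceptional and $-\Delta$ is $h$-nef, so by the negativity lemma (which follows from the negative-definiteness of the intersection matrix of the exceptional divisors, exactly as in the Zariski-decomposition argument recalled after Lemma \ref{HIT}) we get $\Delta\ge 0$. Symmetrically, running the same argument does not immediately give $\Delta\le 0$ because I do not yet know $K_{\cG}$ is nef; so instead, having $\Delta\ge 0$ and $-\Delta$ $h$-nef and $h$-exceptional, a second application of negativity (or the same one: $\Delta$ effective, $h$-exceptional, with $\Delta\cdot E'\le 0$ for all $h$-exceptional $E'$ forces $\Delta=0$) yields $\Delta=0$. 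Hence $g^*K_{\cF}=h^*K_{\cG}$.

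Finally I would deduce the two assertions. Pushing $g^*K_{\cF}=h^*K_{\cG}$ forward by $f$: since $h=f\circ g$, we get $K_{\cF}=f_*g_*g^*K_{\cF}=f_*g_*h^*K_{\cG}=f_*(f^*K_{\cG})=K_{\cG}$... more carefully, $g_*g^*K_{\cF}=K_{\cF}$ and $g_*h^*K_{\cG}=f^*K_{\cG}$ (as $g_*h^*=g_*g^*f^*=f^*$ on divisor classes, using that $g_*g^*$ is the identity on Weil divisors on $X$), so $K_{\cF}=f^*K_{\cG}$. Then $K_{\cG}$ is nef: for any irreducible curve $C\subset Y$, pick a curve $C'\subset X$ with $f(C')=C$ (not contracted), and $K_{\cG}\cdot C=\tfrac{1}{(C':C)}f^*K_{\cG}\cdot C'=\tfrac{1}{(C':C)}K_{\cF}\cdot C'\ge 0$ since $K_{\cF}$ is nef; here $(C':C)$ denotes the degree of $C'\to C$. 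The main obstacle I anticipate is bookkeeping the sign conventions and making the negativity step genuinely one-directional — i.e.\ getting $\Delta=0$ from $\Delta$ effective, $h$-exceptional, and $-\Delta$ relatively nef, without circularly assuming $K_{\cG}$ is nef; this is handled by the standard negativity lemma, whose proof is exactly the negative-definiteness argument already used in the excerpt to establish uniqueness of Zariski decomposition.
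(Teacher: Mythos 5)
Your setup — passing to a common resolution $Z\to X,Y$ and writing $\Delta=g^*K_\cF-h^*K_\cG$ — is a workable route, but there is a real sign error that breaks the negativity step, and as a result the argument does not establish $\Delta=0$.

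You correctly compute $\Delta\cdot E'=g^*K_\cF\cdot E'\ge 0$ for every $h$-exceptional curve $E'$. But this says that $\Delta$ is $h$-nef, not that $-\Delta$ is $h$-nef. The negativity lemma then reads: $\Delta$ is $h$-exceptional and $\Delta\cdot E'\ge 0$ for all $h$-exceptional $E'$ implies $\Delta\le 0$ (apply the standard form to $-\Delta$). So you get $\Delta\le 0$, the opposite of what you claim. The "second application" in your parenthetical is also not valid: "$\Delta$ effective, $h$-exceptional, with $\Delta\cdot E'\le 0$ for all $h$-exceptional $E'$ forces $\Delta=0$" is simply false — a single $(-1)$-curve is a counterexample. (It would be true with $\ge 0$ in place of $\le 0$, since then $\Delta^2=\sum\delta_i\,(\Delta\cdot E'_i)\ge 0$ while also $\Delta^2\le 0$ by negative definiteness.) The missing ingredient is precisely the positivity $\Delta\ge 0$, which must come from the assumption that $(Y,\cG)$ has canonical singularities — the very input you noted ("$a_E(\cG)\ge 0$") and then chose to set aside. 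Concretely: $K_\cF-f^*K_\cG=\sum a_E(\cG)E\ge 0$ is effective and $f$-exceptional on $X$, and $\Delta=g^*(K_\cF-f^*K_\cG)\ge 0$ (Mumford pull-back of an effective divisor is effective, itself a small negativity argument). With $\Delta\ge 0$, $\Delta$ $h$-exceptional, and $\Delta\cdot E'\ge 0$, you get $\Delta^2\ge 0$ and $\Delta^2\le 0$, hence $\Delta=0$.

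For comparison, the paper's proof skips the common resolution entirely and works directly on $X$: set $E:=K_\cF-f^*K_\cG$, which is effective and $f$-exceptional because $(Y,\cG)$ is canonical; then $0\le K_\cF\cdot E=(f^*K_\cG+E)\cdot E=E^2\le 0$ (nefness of $K_\cF$, vanishing of $f^*K_\cG\cdot E$ by Mumford's definition, and negative definiteness of the exceptional intersection form), so $E=0$. This is three lines with no auxiliary resolution, and it makes transparent where the two hypotheses (canonical $\cG$ giving $E\ge 0$, nef $K_\cF$ giving $K_\cF\cdot E\ge 0$) enter; your version can be repaired to the same effect, but as written the sign confusion leaves a genuine gap.
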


\begin{proof}
Note that equality $K_{\cF}=f^*K_{\cG}$ and nefness of $K_{\cF}$ imply nefness of $K_\cG$, so it is sufficient to prove this equality.  Since $ (Y,\cG)$ has canonical singularities, we have $K_\cF =f^*K_{\cG}+E$ where $E$ is an effective and 
exceptional $\QQ$-divisor. Since $K_{\cF}$ is nef we have $K_{\cF}\cdot E\ge 0$. But $K_{\cF}\cdot E=E^2\le 0$, so $E^2=0$, which implies $E=0$. 
\end{proof}

\begin{Theorem}  Let $(X,\cF )$ be any foliation with at most canonical singularities. If $K_\cF$ is nef and big then there exists a unique morphism $f:(X,\cF )\to (X',\cF ')$ such that $K_\cF=f^*K_{\cF'}$ and $(X',\cF')$ is a canonical model.
\end{Theorem}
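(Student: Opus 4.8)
The plan is to realize the desired $(X',\cF')$ as the canonical model produced by McQuillan's theorem and then to check that the resulting contraction is crepant and that it is unique.

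\emph{Existence.} Since $K_\cF$ is nef and big it is in particular pseudoeffective, so by \cite[Theorem 1, III.3.2]{McQ08} there is a proper birational morphism $f\colon X\to X'$ onto a normal complete surface with $(X',\cF':=f_*\cF)$ a canonical model. Because $K_\cF$ is nef, Lemma~\ref{crepant} immediately yields $K_\cF=f^*K_{\cF'}$ together with nefness of $K_{\cF'}$, which is the crepancy assertion. I would then note that $K_{\cF'}$ is big: pushing forward sections along $f$ gives an injection $H^0(X,\cO_X(mK_\cF))\hookrightarrow H^0(X',\cO_{X'}(mK_{\cF'}))$ since $K_\cF=f^*K_{\cF'}$, so bigness of $K_\cF$ forces bigness of $K_{\cF'}$ (equivalently one checks $K_{\cF'}^2=(f^*K_{\cF'})^2=K_\cF^2>0$ and uses nefness). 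Hence ${\rm kod}(\cF')=2$, and Lemma~\ref{l-can} shows that $K_{\cF'}$ is numerically ample.

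\emph{Uniqueness.} Suppose $g\colon(X,\cF)\to(X'',\cF'')$ is another morphism with $K_\cF=g^*K_{\cF''}$ and $(X'',\cF'')$ a canonical model; the same argument as above makes $K_{\cF''}$ numerically ample. The key point is that $f$ and $g$ must contract exactly the same curves, namely the reduced curve $E:=\bigcup\{C\text{ irreducible}:K_\cF\cdot C=0\}$. Indeed, if $C$ is contracted by $f$ then $K_\cF\cdot C=f^*K_{\cF'}\cdot C=K_{\cF'}\cdot f_*C=0$ by the projection formula, whereas if $C$ is not contracted then $f_*C$ is a nonzero effective $1$-cycle and $K_\cF\cdot C=K_{\cF'}\cdot f_*C>0$ by numerical ampleness of $K_{\cF'}$; the identical reasoning applies to $g$. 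Thus $f$ and $g$ are two proper birational morphisms of normal complete surfaces with one and the same exceptional locus $E$. Since the exceptional locus of such a morphism has negative definite intersection matrix (a classical fact, implicit in the discussion of Zariski decompositions above), the contraction of $E$ is unique up to unique isomorphism by Artin's contractibility theorem \cite{Ar62}; this produces an isomorphism $X'\to X''$ identifying $f$ with $g$, and since $\cF'=f_*\cF$ and $\cF''=g_*\cF$ it identifies $\cF'$ with $\cF''$ as well. When $E=\emptyset$ the morphism $f$ is already an isomorphism and $X$ is its own canonical model.

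I expect the main obstacle to lie in the uniqueness part, where two points need care: establishing that $K_{\cF'}$ is numerically ample (which rests on ${\rm kod}(\cF')=2$ and Lemma~\ref{l-can}, and cannot be upgraded to ampleness or finite generation of $R(K_{\cF'})$, which genuinely fail at cusp singularities), and correctly invoking the uniqueness (universal property) of the Artin contraction of a negative definite configuration of curves on a normal complete algebraic space. The remaining ingredients — pseudoeffectivity, the crepancy statement, and the projection-formula computation — are routine given Lemmas~\ref{crepant} and~\ref{l-can} together with McQuillan's existence theorem.
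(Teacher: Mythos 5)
Your proof follows the same route as the paper: existence and crepancy via McQuillan's theorem plus Lemma~\ref{crepant}, then uniqueness by observing (via numerical ampleness from Lemma~\ref{l-can}) that any such $f$ and $g$ contract exactly the curves with $K_\cF\cdot C=0$, and hence coincide. The extra detail you give about bigness of $K_{\cF'}$ and the universal property of the contraction is fine but is left implicit in the paper.
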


\begin{proof} The existence of $f:(X,\cF )\to (X',\cF ')$ is proved in \cite[Theorem 1 III.3.2]{McQ08}. By Lemma \ref{crepant} we have $K_\cF =f^*K_{\cF'}$.
Suppose that $g:(X,\cF )\to (X'',{\cF}''  )$ is also a morphism to a canonical model, then $f^*K_{\cF'}=K_\cF=g^*K_{{\cF }''}$ where both $K_{\cF'}$ and $ K_{{\cF }''}$ are numerically ample (see Lemma \ref{l-can}). Suppose that $C$ is a curve on $X$. Then $C\cdot K_\cF=f_*C\cdot K_{\cF'}=g_*C\cdot K_{\cF''}$ and so $C$ is contracted by $f$ if and only if $C\cdot K_\cF=0$, i.e. if and only if $C$ is contracted by $g$.  Since $X'$ and $X''$ are normal varieties, it follows that in fact $X'=X''$.
\end{proof}

\section{Contributions to the Riemann--Roch for canonical foliation singularities} \label{contributions}

In this section we use classification of canonical foliation singularities and we compute the corresponding 
contributions to the Riemann--Roch formula (see Subsection \ref{s-RR}). 
Unfortunately, the current classification as described in \cite{McQ08} describes only canonical singularities appearing 
on canonical models of complete foliated surfaces $(Y, \cG)$ with pseudoeffective $K_{\cG}$. In general, the classification is the same and can be done using McQuillan's ideas but the proof requires some additional work and we will deal with it in another paper. Since in this paper we study canonical models of general type,
we will use McQuillan's classification without further mentioning this fact.

Let  us also remark that the known classification provides only formal description in the $\QQ$-Gorenstein case (see \cite[Warning I.2.7]{McQ08}). However, $\QQ$-Gorenstein singularities of foliations occur only at quotient singularities. At such singularities $(Y, \cG, y)$ the local type of the reflexive sheaf
$\cO_Y (K_{\cG})$ at $y$ is determined by the formal description, so in these cases we will ignore the fact that the description is only formal.

\subsection{Terminal singularities}\label{terminal}

Let $(Y, \cG, y)$ be a terminal foliation singularity. Such a singularity is obtained by contracting an $\cF$-chain on a foliated surface $(X, \cF, C)$ such that $X$ is smooth and $\cF$ has only reduced singularities.  Let us recall that an $\cF$-chain is a Hirzebruch--Jung string $C=\bigcup C_i$ satisfying $K_{\cF}\cdot C_1=-1$ and $K_{\cF}\cdot C_i=0$ for $i>1$.  In particular,  the obtained singularity of $Y$ is cyclic of type  $\frac{1}{n}(1,q)$ for some pair of coprime integers $(n,q)$ with $0<q<n$ {(cf. \S \ref{cyclic}).  Therefore terminal foliation singularities are rational and $\QQ$-factorial.}

\begin{Lemma} \label{Brunella-cor}
$T_{\cG}$ is locally isomorphic at $y$ to the reflexive sheaf $\cL _{n-q}$. Moreover, we have  an isomorphism $T_{\cF}\simeq f^*T_{\cG}/ torsion$.
\end{Lemma}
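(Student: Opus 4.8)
The plan is to identify both sheaves through their intersection numbers against the components $C_1,\dots,C_r$ of the $\cF$-chain, using Wunram's theorem (Theorem \ref{Wunram}) to pin down the relevant character and the rigidity for rational singularities provided by Lemma \ref{equality}.

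First I would record the numerical data on the minimal resolution $f:(X,C)\to (Y,y)$. Since $X$ is smooth, $T_\cF=\cO_X(-K_\cF)$ is a line bundle, and the definition of an $\cF$-chain gives $T_\cF\cdot C_1=-K_\cF\cdot C_1=1$ and $T_\cF\cdot C_j=-K_\cF\cdot C_j=0$ for $j>1$. On the other side I would run the recursion of Theorem \ref{Wunram} with $i=q=s_1$: the relation $q=d_1s_1+t_1$ with $0\le t_1<s_1$ forces $d_1=1$ and $t_1=0$, and then inductively $d_j=0$ for all $j\ge 2$. Hence $\cM_q=f^*\cN_q/\torsion$ is a line bundle with $\cM_q\cdot C_1=1$ and $\cM_q\cdot C_j=0$ for $j>1$, exactly the intersection numbers of $T_\cF$. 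As $(Y,y)$ is a cyclic quotient singularity it is rational, so Lemma \ref{equality}, applied over the localization of $Y$ at $y$, gives $T_\cF\simeq\cM_q=f^*\cN_q/\torsion$.

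Next I would push this isomorphism down to $Y$. For any reflexive rank-one sheaf $\cN$ on $Y$ one has $f_*(f^*\cN/\torsion)\simeq\cN$: the natural map $\cN\to f_*(f^*\cN/\torsion)$ is an isomorphism over $Y\setminus\{y\}$ (where $\cN$ is locally free), it is injective because its target is torsion free, and that target embeds into the pushforward from $Y\setminus\{y\}$ of its own restriction, which is $\cN^{**}=\cN$; so the map is an isomorphism. Taking $\cN=\cN_q$ and using the previous step gives $f_*T_\cF\simeq\cN_q$; in particular $f_*T_\cF$ is reflexive. Since $\cG=f_*\cF$, the sheaf $T_\cG$ is by definition the saturation of $f_*T_\cF$ inside $T_Y$; it is reflexive (a saturated rank-one subsheaf of a reflexive sheaf on a normal surface is reflexive) and it coincides with $f_*T_\cF$ over $Y\setminus\{y\}$, where $f$ is an isomorphism and $T_\cF$ is already saturated in $T_X$. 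Two reflexive subsheaves of $T_Y$ agreeing in codimension one are equal, so $T_\cG=f_*T_\cF\simeq\cN_q=\cL_{n-q}$ near $y$, using $\cN_i=\cL_{n-i}$. This is the first assertion; combining $T_\cG\simeq\cN_q$ with $T_\cF\simeq f^*\cN_q/\torsion$ from the previous paragraph then yields $T_\cF\simeq f^*T_\cG/\torsion$.

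The one delicate point is the descent bookkeeping in the last paragraph — verifying that $f_*(f^*\cN/\torsion)=\cN$ and that passing to the saturation does not change $f_*T_\cF$ — but this is routine reflexive-sheaf formalism on normal surfaces. That the relevant character is $\chi_q$ is not an extra hypothesis: it is forced by matching the intersection vector $(1,0,\dots,0)$ of $T_\cF$ along the $\cF$-chain with Wunram's computation, together with McQuillan's identification of the contracted $\cF$-chain with the cyclic quotient singularity $\frac{1}{n}(1,q)$.
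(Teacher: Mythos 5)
Your argument is correct and follows essentially the same route as the paper: apply Theorem \ref{Wunram} with $i=q$ to see that $\cM_q=f^*\cN_q/\torsion$ has intersection vector $(1,0,\dots,0)$ against the $\cF$-chain, match this with $T_\cF\cdot C_j=-K_\cF\cdot C_j$, and invoke Lemma \ref{equality}. The only cosmetic difference is that you re-derive the descent statement $f_*(f^*\cN_q/\torsion)\simeq\cN_q$ and the identification $T_\cG=f_*T_\cF$ by hand, whereas the paper simply quotes the second assertion of Lemma \ref{equality} (namely $(f_*L_1)^{**}\simeq (f_*L_2)^{**}$) together with $T_\cG=(f_*T_\cF)^{**}$ to reach the same conclusion.
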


\begin{proof}
By  Theorem \ref{Wunram} the line bundle  $\cM_q=f^*\cN_q/torsion$ has the same intersections with $C_i$
as $T_{\cF}$. To see this, recall that $s_0=n$ and $s_1=q$. Since $q=d_1s_1+t_1$ it follows that $d_1=1$ and $t_1=0$. It is then immediate that $t_i=d_i=0$ for $i=2,\ldots , r$. So Lemma \ref{equality} implies that $T_{\cF}$ and $f^*T_{\cG}/ torsion$ are isomorphic line bundles and
$T_{\cG}=(f_*T_{\cF})^{**}$ is locally  isomorphic to $f_*\cM_q=\cN_q=\cL _{n-q}$ on a neighborhood of $y$. 
\end{proof}
{\begin{Corollary}\label{C-ter} We have $\cO _Y(mK_\cG)\cong \cL _{\overline {mq}}$ where $\overline x$ denotes the reminder of dividing $x$ by $n$. In particular $a(y,K_\cG)=-\frac{n-1}{2n}$.
\end{Corollary}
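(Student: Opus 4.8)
The plan is to deduce everything from Lemma \ref{Brunella-cor} together with the cyclic quotient computations collected in \S\ref{cyclic}. First, by Lemma \ref{Brunella-cor} the reflexive sheaf $\cO_Y(K_\cG) = T_\cG^{*}$ is, on a neighbourhood of $y$, isomorphic to $\cL_{n-q}^{*}$; using the identification $\cL_i^{*} = \cN_i = \cL_{n-i}$ from \S\ref{cyclic} (indices read modulo $n$), this is $\cL_{n-(n-q)} = \cL_q$. Since, by Theorem \ref{RR}, the local contribution $a(y,D)$ depends only on the local isomorphism class of $\cO_Y(D)$ at $y$, the last assertion will follow at once once we have the first one: $a(y, K_\cG) = a(y, \cL_q)$, and the equality $a(y, \cL_q) = a(y,\cL_1) = -\frac{n-1}{2n}$ is precisely the value recorded at the end of \S\ref{cyclic} (note that $\overline{q} = q$ since $0 < q < n$).

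It then remains to prove the local isomorphism $\cO_Y(mK_\cG) \cong \cL_{\overline{mq}}$ near $y$, i.e. to identify the reflexive powers of $\cL_q$. I would argue on the analytic germ, where $\mu\colon (\CC^2,0) \to (Y,y)$ is the cyclic cover with group $G = \ZZ/n\ZZ$ and $\mu_{*}\cO_{\CC^2} = \bigoplus_{i \in \ZZ/n\ZZ} \cL_i$ as a sheaf of $\cO_Y$-algebras, the $\cL_i$ being the eigensheaves for the characters $\chi_i$. Multiplication of functions gives $\cO_Y$-linear maps $\cL_i \otimes_{\cO_Y} \cL_j \to \cL_{\overline{i+j}}$ which, over the punctured germ $Y \setminus \{y\}$ (where $Y$ is smooth, every $\cL_i$ is invertible, and in a suitable local trivialization the eigensheaf generators multiply to eigensheaf generators), are isomorphisms. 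Passing to double duals, which can alter a coherent sheaf only at the isolated point $y$, yields $(\cL_i \otimes_{\cO_Y} \cL_j)^{**} \cong \cL_{\overline{i+j}}$; iterating gives $(\cL_q^{\otimes m})^{**} \cong \cL_{\overline{mq}}$. Since $\cO_Y(mK_\cG)$ is the reflexive $m$-th power of $\cO_Y(K_\cG)$, and $\cO_Y(K_\cG) \cong \cL_q$ locally at $y$ by the first paragraph, we conclude $\cO_Y(mK_\cG) \cong \cL_{\overline{mq}}$ on a neighbourhood of $y$, and in particular $a(y, K_\cG) = a(y,\cL_q) = -\frac{n-1}{2n}$.

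I do not anticipate a genuine obstacle here: the statement is essentially bookkeeping built on Lemma \ref{Brunella-cor} and the material of \S\ref{cyclic}. The only points requiring some care are conventional — that the eigensheaf indexing of the $\cL_i$ invoked in Lemma \ref{Brunella-cor} matches the one underlying the formula for $a(y,\cL_i)$, and that ``$mK_\cG$'' is read as a Weil divisor (equivalently, $\cO_Y(mK_\cG)$ as the reflexive power of $\cO_Y(K_\cG)$) so that the double-dual manipulations are legitimate. Alternatively, one can avoid the analytic germ entirely: using Theorem \ref{Wunram} one computes the intersection numbers of $f^{*}\cL_q^{\otimes m}/\torsion$ with the exceptional curves of the minimal resolution and then invokes Lemma \ref{equality} to pin down the local isomorphism class of $\cO_Y(mK_\cG)$; but the eigensheaf argument above seems cleaner and more transparent.
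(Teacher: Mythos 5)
Your proof is correct and follows essentially the same route as the paper's: invoke Lemma \ref{Brunella-cor} to get $\cO_Y(K_\cG)\cong\cL_q$ locally, deduce $\cO_Y(mK_\cG)\cong\cL_{\overline{mq}}$ from the reflexive-power structure of the eigensheaves, and read off $a(y,K_\cG)=a(y,\cL_q)=-\frac{n-1}{2n}$ from the formula at the end of \S\ref{s-RR}. The paper states the middle step as a bare ``hence''; you have simply made explicit the eigensheaf multiplication argument that justifies it.
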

\begin{proof} By Lemma  \label{Brunella-cor}, we have $\cO _Y(K_\cG )=\cL_q $ and hence $\cO _Y(mK_\cG)\cong \cL _{\overline {mq}}$. The claim now follows from \S \ref{terminal}. 
\end{proof}}
\medskip

\begin{Remark}
The above lemma is a slightly stronger version of  \cite[Corollary I.2.2]{McQ08}, which implies that the foliation 
$\cG$ is locally formally isomorphic to the quotient of a smooth foliation on $(\CC^2, 0)$ by the cyclic group $G$, whose generator acts by $(x_1,x_2)\to (\epsilon x_1, \epsilon^qx_2)$. If $v=\alpha \frac{\partial}{\partial x_1}+\beta \frac{\partial}{\partial x_2}$ is the vector field generating a smooth fibration on $(\CC^2, 0)$ then it transforms under the generator of $G$ to $v'=\alpha \epsilon^{-1}\frac{\partial}{\partial x_1}+\beta \epsilon^{-q}\frac{\partial}{\partial x_2}$ and the assertion that the corresponding foliation is invariant under $G$ is equivalent to the condition $v\wedge v'=0$. If $q=1$, any non-zero $v$ as above is $G$-invariant and it gives rise to a sheaf of type $\cL_1$. However, if $q\ne 1$ then we get the condition $\alpha\beta=0$. So the corresponding foliation corresponds to either $\frac{\partial}{\partial x_1}$ or $\frac{\partial}{\partial x_2}$. In the first case the tangent sheaf of the foliation is locally isomorphic to  $\cL _{n-1}$. By the above lemma this case does not occur if we have an $\cF$-chain. Thus we are in the second case and $\cG$ corresponds to  $\frac{\partial}{\partial x_2}$. 
\end{Remark}

\subsection{Canonical non-terminal $\QQ$-Gorenstein singularities} \label{can1}

\begin{Proposition}\label{Q-Gorenstein}
Let $(Y, \cG, y)$ be a canonical foliation singularity, which is $\QQ$-Gorenstein but it is not terminal.
Then one of the following holds:
\begin{enumerate}
\item $\cG$ is Gorenstein and $a(y, mK_{\cG})=0$ for all $m$, or
\item $\cG$ is $2$-Gorenstein and 
$$a(y, mK_{\cG})=\left\{
\begin{array}{cl}
0& \hbox{ for  $m$ even},\\
-\frac{1}{2} &\hbox{ for $m$ odd}.\\
\end{array}
\right.$$ 
\end{enumerate}
\end{Proposition}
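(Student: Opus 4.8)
The plan is to use McQuillan's classification of $\QQ$-Gorenstein canonical non-terminal foliation singularities together with the contribution formula \eqref{contribution-formula}, reducing everything to intersection-theoretic bookkeeping on a resolution. First I would recall from \cite{McQ08} that a $\QQ$-Gorenstein canonical foliation singularity which is not terminal occurs only at a quotient singularity, and that McQuillan's classification lists the possible germs; in each case the classification tells us the index of $K_{\cG}$, so that $K_{\cG}$ is either Cartier ($\cG$ Gorenstein) or $2K_{\cG}$ is Cartier but $K_{\cG}$ is not ($\cG$ is $2$-Gorenstein). In the Gorenstein case there is nothing to compute: if $mK_{\cG}$ is Cartier at $y$ then $a(y, mK_{\cG}) = 0$ by the last sentence of \S\ref{s-RR}, giving case (1). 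So the content is entirely in case (2).

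For the $2$-Gorenstein case, when $m$ is even $mK_{\cG}$ is Cartier at $y$ and again $a(y, mK_{\cG}) = 0$. When $m$ is odd, $mK_{\cG}$ is locally isomorphic to $K_{\cG}$ (since $(m-1)K_{\cG}$ is Cartier), so it suffices to show $a(y, K_{\cG}) = -\tfrac12$. Here I would take the minimal resolution $f \colon (X, C) \to (Y, y)$ and a divisor $\tilde K$ on $X$ with $f_* \tilde K = K_{\cG}$, compute the $\QQ$-divisor $c_1(y, \tilde K)$ supported on $C$ from the intersection numbers $\deg \cO_{C_i}(\tilde K)$ dictated by McQuillan's local normal form for the foliation, and plug into \eqref{contribution-formula}. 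Since the ambient singularity is a quotient (hence rational) singularity, $\dim(R^1 f_* \cO_X)_y = 0$, and one checks $\chi(y, \cO_X(\tilde K)) = 0$ as well (or absorbs it into the formula using \cite[Prop.~2.10]{La00}); alternatively, if the underlying quotient singularity is cyclic of type $\tfrac1n(1,q)$ one can identify $\cO_Y(K_{\cG})$ with an eigensheaf $\cL_i$ and read off $a(y, \cL_i)$ from the explicit formula in \S\ref{cyclic}, verifying it equals $-\tfrac{n-1}{2n}$ times the relevant normalization — but more robustly I would argue directly that $2 a(y, K_{\cG}) = a(y, 2K_{\cG}) + (\text{correction}) $ is forced by comparing contributions of $K_{\cG}$ and the Cartier divisor $2K_{\cG}$, pinning down $a(y, K_{\cG}) = -\tfrac12$.

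The cleanest route, and the one I would actually write, exploits the following rigidity: the local contribution $a(y, D)$ depends only on the local isomorphism class of the reflexive sheaf $\cO_Y(D)$ at $y$ (Theorem \ref{RR}), and for a $2$-torsion class $\cO_Y(K_{\cG})$ the difference $a(y, 2mK_{\cG}) - a(y, (2m+1)K_{\cG})$ is independent of $m$; combined with $a(y, 0) = 0$ and the Cartier vanishing for even multiples, periodicity in $m$ with period $2$ forces the odd value to be a single constant $\delta$, and then a direct computation of $c_1(y, \tilde K)$ in one explicit model (the simplest $2$-Gorenstein case in McQuillan's list) evaluates $\delta = -\tfrac12$.

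The main obstacle I anticipate is organizational rather than conceptual: McQuillan's classification in \cite{McQ08} is stated only formally in the $\QQ$-Gorenstein case (cf. \cite[Warning I.2.7]{McQ08}), so one must be careful that the formal normal form nonetheless determines the \emph{local analytic} (equivalently local algebraic) isomorphism class of the pair $(\cO_Y, \cO_Y(K_{\cG}))$ at $y$ — this is exactly the point flagged in the paragraph preceding \S\ref{terminal}, so I would invoke it explicitly. The second delicate point is correctly extracting the intersection numbers $K_{\cG} \cdot C_i$ (equivalently $\tilde K \cdot C_i$) on the minimal resolution from the list of canonical non-terminal $\QQ$-Gorenstein germs; once those are in hand, \eqref{contribution-formula} is a finite, bounded calculation. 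I do not expect any genuinely hard estimate — the difficulty is entirely in marshalling the classification correctly and handling the formal-versus-analytic subtlety.
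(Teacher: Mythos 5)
Your outline correctly identifies the two cases (Gorenstein vs.\ $2$-Gorenstein), correctly dismisses the Gorenstein case and even multiples as vacuous via Cartierness, and correctly reduces the odd case to computing $a(y, K_{\cG})$ once. However, the ``cleanest route'' you say you would actually write has a genuine gap: the $2$-periodicity argument only shows that for a \emph{fixed} singularity $(Y,\cG,y)$ the odd contribution is some constant $\delta(Y,\cG,y)$; it gives no reason for this constant to be the same across all $2$-Gorenstein canonical germs. Verifying $\delta=-\tfrac12$ ``in one explicit model'' therefore proves nothing about the others --- and this value really does depend on the local data in general: for instance, on a cyclic $\tfrac14(1,1)$ singularity the $2$-torsion reflexive sheaf $\cL_2$ has $a(y,\cL_2)=0$, not $-\tfrac12$. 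You must carry out the computation for \emph{every} germ in McQuillan's list.

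There is also a classification slip: the $2$-Gorenstein canonical non-terminal germs sit over \emph{dihedral} quotient singularities, not cyclic ones (this is \cite[Fact I.2.4]{McQ08}), so your suggestion of reading the answer off the cyclic eigensheaf formula for $a(y,\cL_i)$ in \S\ref{cyclic} does not apply. The paper instead treats both dihedral subtypes (e$'$) and (e$''$) of \cite[Fact I.2.4]{McQ08}: it identifies $T_{\cG}$ as the reflexive sheaf attached to an explicit order-$2$ character $\chi$ of the dihedral group $G\subset\GL(2,\CC)$, and then evaluates the local contribution \emph{without} passing to the minimal resolution, by the character-sum formula of \cite[Theorem 5.4]{La00},
$$a(y,\cO_Y(K_{\cG}))=\frac{1}{|G|}\sum_{g\in G:\,\chi(g)=-1}\frac{-2}{\det(1-g)},$$
followed by a short Gauss-type pairing of roots of unity to show the sum equals $n$ in each subtype, whence $a(y,K_{\cG})=-\tfrac12$. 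Your first-mentioned approach (minimal resolution, $c_1(y,\tilde K)$, formula \eqref{contribution-formula}) is valid in principle and would reach the same answer, but only if you actually carry it out for all dihedral germs; as written it trades the paper's clean character sum for a heavier star-shaped-graph computation while the route you would ``actually write'' is incomplete.
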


\begin{proof}
By \cite[Fact I.2.4]{McQ08} we know that $(Y,y)$ has either a cyclic quotient singularity or a dihedral quotient singularity. The first case corresponds to cases (a)--(d) in  \cite[Fact I.2.4]{McQ08}  and in these cases $\cG$ is Gorenstein so we are in case 1 of the proposition. In the second case the assertion follows from Lemma \ref{dihedral}.
\end{proof}

\begin{Lemma}\label{dihedral}
Let $(Y, \cG, y)$ be a canonical foliation singularity and assume that $Y$ has a dihedral quotient singularity at $y$. Then for any integer $m$ we have
$$a(y, mK_{\cG})=\left\{
\begin{array}{cl}
0& \hbox{ for  $m$ even},\\
-\frac{1}{2} &\hbox{ for $m$ odd}.\\
\end{array}
\right.$$ 
\end{Lemma}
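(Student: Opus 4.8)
The plan is to use McQuillan's classification of canonical foliation singularities on dihedral quotient singularities together with the local Riemann–Roch contribution formula \eqref{contribution-formula} and the explicit computation of $c_1(y,\cdot)$ on the minimal resolution. First I would recall from \cite[Fact I.2.4]{McQ08} the precise list of dihedral cases: a dihedral quotient singularity $\frac{1}{m}(1,q)$-type resolution has a dual graph which is a ``fork'' — a central rational curve $C_0$ meeting three chains, two of which are single $(-2)$-curves (coming from the $\ZZ/2$ part) and a third Hirzebruch–Jung string. I would pin down exactly which foliated configurations $(X,\cF,C)$ McQuillan allows (these are obtained by contracting certain $\cF$-invariant configurations starting from a smooth surface with reduced foliation singularities), and in each case read off the intersection numbers $K_{\cF}\cdot C_i$ for every exceptional curve $C_i$.

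Next, since $\cG$ is $\QQ$-Gorenstein here (dihedral quotient singularities are $\QQ$-factorial), $K_{\cG}$ is $2$-Cartier or Cartier, so it suffices to compute $a(y,K_{\cG})$ and $a(y,2K_{\cG})$; all other values follow by periodicity from the $\QQ$-Gorenstein index and the fact that $a(y,D)$ depends only on the local isomorphism class of $\cO_Y(D)$, hence $a(y,2kK_{\cG}) = a(y,0)=0$ and $a(y,(2k+1)K_{\cG})=a(y,K_{\cG})$. For $m=2$, $2K_{\cG}$ is Cartier so $a(y,2K_{\cG})=0$ by the last sentence of \S\ref{s-RR}. So the whole lemma reduces to the single claim $a(y,K_{\cG})=-\tfrac12$. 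For this I would take the minimal resolution $f\colon (X,C)\to (Y,y)$, let $\tilde D$ be a divisor with $f_*\tilde D = K_{\cG}$ — most naturally $\tilde D = K_{f^*\cF}$ pulled to the relevant model, or directly $K_{\cF}$ on the smooth model from which the dihedral singularity is produced — compute $c_1(y,\tilde D)$ as the $\QQ$-divisor on $C$ with $c_1(y,\tilde D)\cdot C_i = K_{\cF}\cdot C_i$, compute $c_1(y,K_X)$ similarly (this is the standard discrepancy computation for the dihedral quotient singularity), and plug into \eqref{contribution-formula}, also checking that $\chi(y,\cO_X(\tilde D)) - \dim(R^1f_*\cO_X)_y$ contributes nothing new beyond the intersection-theoretic term — the latter vanishes since dihedral quotient singularities are rational, and one argues $R^1f_*\cO_X(\tilde D)=0$ and $f_*\cO_X(\tilde D)=\cO_Y(K_{\cG})$ directly, e.g. via Lemma \ref{equality} or a GR-type argument, so that $\chi(y,\cO_X(\tilde D))=0$. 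The quadratic term $\tfrac12 c_1(y,\tilde D)(c_1(y,\tilde D)-c_1(y,K_X))$ then should evaluate to $-\tfrac12$.

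Alternatively, and perhaps cleaner, I would avoid an explicit resolution computation by comparing with the cyclic case already handled: a dihedral quotient singularity $Y$ has a $\ZZ/2$-quotient presentation $Y = \tilde Y/(\ZZ/2)$ where $\tilde Y$ has a cyclic quotient singularity, and the foliation $\cG$ pulls back to a foliation $\tilde{\cG}$ on $\tilde Y$ whose canonical sheaf is of type $\cL_i$ with $i\equiv q$ — by the remark at the end of \S\ref{cyclic} we have $a(\tilde y,\cL_q)=-\tfrac{n-1}{2n}$ — and then I would use the behaviour of local contributions under the étale-in-codimension-one $\ZZ/2$-cover together with the known relation $a(y,mK_{\cG})$ (the dihedral $\QQ$-Gorenstein index is $2$) to descend the computation. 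The main obstacle I anticipate is the bookkeeping in identifying, among McQuillan's dihedral cases, the precise local type of $\cO_Y(K_{\cG})$ as a reflexive sheaf — i.e. which eigen-component / which $\cL_i$-type it is — since the classification in \cite{McQ08} is only formal and one must check (as in \S\ref{contributions}) that the formal data determines the local isomorphism class of $\cO_Y(K_{\cG})$; once that identification is in hand, the contribution formula delivers the answer by a short computation, with the parity dichotomy being essentially forced by $2$-Gorensteinness.
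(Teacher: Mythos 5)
Your reduction to the single equality $a(y,K_{\cG})=-\tfrac12$ (by $2$-Gorensteinness, periodicity of the local contribution, and the vanishing of $a(y,\cdot)$ on Cartier divisors) is exactly how the paper opens its proof, and both of the routes you then sketch should work, but they are genuinely different from what the paper actually does. The paper does not pass to the minimal resolution and does not compute $c_1(y,\tilde D)$ on a dual graph; instead it works directly with the quotient presentation $(Y,y)=(\CC^2,0)/G$ for the two dihedral groups $G$ of McQuillan's cases (e') and (e''), writes down the $G$-invariant vector field $\partial$ generating $\cG$, verifies $\partial^{\alpha}=\partial$ and $\partial^{\sigma}=-\partial$ to pin down $T_{\cG}$ as the rank-one reflexive sheaf attached to an order-$2$ character $\chi$ of $G$ (which simultaneously yields $2$-Gorensteinness), and then evaluates the closed-form Molien-type expression
\[
a(y,\cO_Y(K_{\cG}))=\frac{1}{|G|}\sum_{g\in G:\ \chi(g)=-1}\frac{-2}{\det(1-g)}
\]
from \cite[Theorem 5.4]{La00} by a trigonometric/root-of-unity manipulation, getting $-\tfrac12$ in each case. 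Thus the paper entirely sidesteps both the resolution bookkeeping (your first route) and the explicit $\ZZ/2$-descent from the cyclic case (your second route); the price it pays is that it needs a formula expressing the local contribution directly in terms of the group action, and it needs the precise vector field from McQuillan's classification. Your observation that the ``main obstacle'' is identifying the local type of $\cO_Y(K_{\cG})$ as a reflexive sheaf is exactly right — that identification, via the character $\chi$, is the heart of the paper's argument, after which the computation is short. If you pursue your first route, be aware you would still need that identification (to know $K_{\cF}\cdot C_i$ on the resolution), and you would additionally have to justify $\chi(y,\cO_X(\tilde D))=0$ for the chosen lift $\tilde D$, which is not automatic; the paper's character formula folds all of that into a single sum.
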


\begin{proof}
In the notation of \cite[Fact I.2.4]{McQ08}  let us consider a dihedral quotient singularity $(Y,y)=(\CC^2, 0)/G$, where $G\subset \GL (2, \CC)$ is a certain dihedral type group of order $4n$, that does not contain any pseudoreflections.
It is sufficient to show that $2K_{\cG}$ is Cartier and 
$a(y, K_{\cG})=-\frac{1}{2}.$

Let us consider case (e'). In this case $G\subset \GL (2, \CC)$ is generated by
$$\alpha=\left(  \begin{array}{cc}
\epsilon _{2n}  &0\\
0&\epsilon _{2n}^p\\
\end{array}
\right), \quad
\sigma=\left(  \begin{array}{cc}
0& i\\
i& 0\\
\end{array}
\right),
$$
where $p$ is a certain integer such that  $p\equiv -1 \mod 2^am$ and $p\equiv 1 \mod l$, the integers $l$, $m$ are odd and relatively prime  and  $2n=2^alm$.  In this case the foliation $\cG$  comes from a G-invariant foliation on $\CC^2$  generated by the vector field
$$\partial = (1 +\varphi ((xy)^l))x \frac{\partial} {\partial x} -  (1 +\varphi (-(xy)^l)) y\frac{\partial} {\partial y},$$
where $\varphi$ vanishes at $0$. Then one can easily check that $G$ acts on this vector field via
$$\partial  ^{\alpha}=\partial \quad \hbox{and} \quad \partial  ^{\sigma}=-\partial. $$ 
This shows that $T_{\cG}$ is the rank $1$ reflexive sheaf on $(Y,y)$ associated to the order $2$ 
character $\chi: G\to \CC^*$ defined by
$$\chi (\alpha) =1\quad \hbox{and} \quad \chi (\sigma)=-1.$$
In particular, the foliation $\cG$ is $2$-Gorenstein but it is not Gorenstein.

By \cite[Theorem 5.4]{La00} we have
$$a(y, \cO _Y (K_{\cG}))=\frac{1}{|G|}\sum _{g\in G : \, \chi (g)=-1} \frac{-2}{\det (1-g)}=- \frac {1}{2n}\sum _{j=0}^{2n-1} \frac {1}{\det (1-\sigma \alpha ^j)} = -\frac {1}{2n}\sum _{j=0}^{2n-1} \frac {1}{1+\epsilon _{2n}^{(p+1)j}} .$$

\begin{Lemma}
	For any positive integers $q$, $r$ we have
	$$  \sum _{j=0}^{q-1} \frac {1}{1+\epsilon _{q}^{rj}}
=\frac{q}{2}.$$
\end{Lemma}

\begin{proof}
	We have
	$$2  \sum _{j=0}^{q-1} \frac {1}{1+\epsilon _{q}^{rj}}
	=\sum _{j=0}^{q-1} \left(
	\frac {1}{1+\epsilon _{q}^{rj}}+\frac {1}{1+\epsilon _{q}^{r(q-j)}}\right)=\sum _{j=0}^{q-1} 
	\left( \frac {1}{1+\epsilon _{q}^{rj}}+\frac {\epsilon _{q}^{rj}}{1+\epsilon _{q}^{rj}}
	\right)=q.$$
\end{proof}

Applying the above lemma to $q=2n$ and $r=p+1$ we get
$$a(y, \cO _Y (K_{\cG}))=-\frac{1}{2}.$$

\bigskip

Now  let us consider the case (e''). In this case $G\subset \GL (2, \CC)$ is generated by
$$\alpha=\left(  \begin{array}{cc}
\epsilon _{2n}  &0\\
0&\epsilon _{2n}^p\\
\end{array}
\right), \quad 
\sigma=\left(  \begin{array}{cc}
0& \epsilon _{4n}^{ml}\\
\epsilon _{4n}^{ml} & 0\\
\end{array}
\right)=\left(  \begin{array}{cc}
0& \epsilon _{2^{a+1}}\\
\epsilon _{2^{a+1}} & 0\\
\end{array}
\right),
$$
where $p$ is a certain integer such that  $p\equiv 1 \mod 2^a$, $a\ge 2$, $p\equiv 1 \mod l$ and $p\equiv -1 \mod m$,  the integers $l$, $m$ are odd and relatively prime  and  $2n=2^alm$.

 In this case the foliation $\cG$  comes from a G-invariant foliation on $\CC^2$  generated by the vector field
$$\partial = (1 +\varphi ((xy)^{2^{a-1}l}))x \frac{\partial} {\partial x} -  (1 +\varphi (-(xy)^{2^{a-1}l})) y\frac{\partial} {\partial y},$$
where $\varphi$ vanishes at $0$. Then one can easily check that
$$\partial  ^{\alpha}=\partial \quad \hbox{and} \quad \partial  ^{\sigma}= -\partial. $$ 
So, as above, $T_{\cG}$ is the rank $1$ reflexive sheaf on $(Y,y)$ associated to the 
character $\chi: G\to \CC^*$ defined by
$$\chi (\alpha) =1\quad \hbox{and} \quad \chi (\sigma)=-1.$$
Similarly to the previous case we have
$$a(y, \cO _Y (K_{\cG}))= - \frac {1}{2n}\sum _{j=0}^{2n-1} \frac {1}{\det (1-\sigma \alpha ^j)} = -\frac {1}{2n}\sum _{j=0}^{2n-1} \frac {1}{  1-\epsilon _{2n}^{(p+1)j+ml}    } .$$

\begin{Lemma}
	For any positive integers $q$, $r$ and $s$ we have
	$$\Re \left( \sum _{j=0}^{q-1} \frac {1}{1-\epsilon _{q}^{rj+s}}
	\right)=\frac{q}{2}.$$
\end{Lemma}

\begin{proof}
	We have
	$$2\Re \left( \sum _{j=0}^{q-1} \frac {1}{1-\epsilon _{q}^{rj+s}}
	\right)=\sum _{j=0}^{q-1} \left(
	\frac {1}{1-\epsilon _{q}^{rj+s}}+\frac {1}{1-\epsilon _{q}^{-(rj+s)}}\right)=\sum _{j=0}^{q-1} 
	\left( \frac {1}{1-\epsilon _{q}^{rj+s}}-\frac {\epsilon _{q}^{rj+s}}{1-\epsilon _{q}^{rj+s}}
	\right)=q.$$
\end{proof}

Since $a(y, \cO _Y (K_{\cG}))$ is real,  the above lemma implies that
 $$ \sum _{j=0}^{2n-1} \frac {1}{  1-\epsilon _{2n}^{(p+1)j+ml} } =n
$$
and we get 
$$a(y, \cO _Y (K_{\cG}))=-\frac{1}{2}.$$
\end{proof}

\subsection{Canonical non-$\QQ$-Gorenstein singularities}\label{can2}

\begin{Proposition}\label{cusps}
Let $(Y, \cG, y)$ be a canonical foliation singularity and assume that $\cG$ 
is non-$\QQ$-Gorenstein at $y$. Then for any integer $m$ we have
$$a(y, mK_{\cG})=\left\{
\begin{array}{cl}
0& \hbox{ for }m=0,\\
-1&\hbox{ for }m\ne 0 .\\
\end{array}
\right.$$ 
\end{Proposition}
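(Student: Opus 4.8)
The plan is to handle the non-$\QQ$-Gorenstein canonical singularities---which by McQuillan's classification are precisely the \emph{cusp singularities}---by passing to a suitable resolution on which the foliation is reduced, computing the local modified Euler characteristic via formula \eqref{contribution-formula}, and then exploiting the special numerical structure of the cusp-resolving divisor. Concretely, I would take $f:(X,\cF,C)\to(Y,\cG,y)$ a minimal (or convenient) resolution such that $\cF$ has reduced singularities, write $K_\cF=f^*K_\cG+\Delta$ with $\Delta$ the (not necessarily effective, possibly irrational, and in the cusp case not finitely supported in the $\QQ$-sense) discrepancy correction, and choose a divisor $\tilde D$ on $X$ with $f_*\tilde D=mK_\cG$. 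The key is that by \eqref{contribution-formula},
$$a(y,mK_\cG)=\tfrac12\,c_1(y,\tilde D)\bigl(c_1(y,\tilde D)-c_1(y,K_X)\bigr)+\chi(y,\cO_X(\tilde D))-\dim(R^1f_*\cO_X)_y,$$
and since all the McQuillan cusp resolutions are rational singularities of $Y$ (the underlying surface singularity being a cusp of Hirzebruch, which is \emph{not} rational---so here one must be careful: actually the relevant point is whether $R^1f_*\cO_X=0$), one computes each term using the explicit dual graph of the cusp cycle.

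First I would recall from \cite[\S\,III.i.3 and the classification]{McQ08} the precise local picture of a cusp: the exceptional divisor $C=\sum C_i$ of the minimal resolution is a cycle of rational curves (or a single nodal rational curve), $\cF$ is the foliation whose leaves are (locally) the fibers transverse to $C$, so that $K_\cF\cdot C_i = $ (the relevant reduced-singularity count) and in fact $K_\cF|_C$ is numerically determined: one has $K_\cF\cdot C_i=0$ for the ``internal'' structure while $K_\cF-f^*K_\cG$ is an effective divisor supported on $C$ whose coefficients one reads off. The crucial numerical input is that for the cusp, $f^*K_\cG = K_\cF - C$ in the appropriate normalization, i.e. the discrepancy of $\cF$ along $C$ is exactly $-1$ on each component (this is the content of $K_\cG$ being non-$\QQ$-Cartier but $K_\cF$ being $f$-nef with $K_\cF\cdot C_i=0$ forcing $c_1(y,mK_\cG)$ to be computed against a cycle with trivial self-pairing behaviour on $P$). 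I would then feed $\tilde D := mK_\cF$ (so $f_*\tilde D=mK_\cG$ since $f_*K_\cF=K_\cG$) into \eqref{contribution-formula}: the term $c_1(y,mK_\cF)$ is forced to be $0$ because $mK_\cF\cdot C_i=0$ for all $i$ and the intersection form on $C$ is negative semidefinite with kernel spanned by $C$ itself (the cusp cycle), and similarly $c_1(y,K_X)=c_1(y,K_\cF-\text{(boundary)})$ is computed from $K_X\cdot C_i = -2-C_i^2$. The quadratic term then reduces to a self-intersection of an effective cycle supported on $C$, which for the cusp cycle evaluates to $0$, leaving $a(y,mK_\cG)=\chi(y,\cO_X(mK_\cF))-\dim(R^1f_*\cO_X)_y$.

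The final step is to evaluate $\chi(y,\cO_X(mK_\cF))=\dim(\cO_Y(mK_\cG)/f_*\cO_X(mK_\cF))_y+\dim(R^1f_*\cO_X(mK_\cF))_y$ and $\dim(R^1f_*\cO_X)_y$. For a cusp singularity the geometric genus is $1$, so $\dim(R^1f_*\cO_X)_y=1$, and by the foliated Grauert--Riemenschneider type statement one expects $R^1f_*\cO_X(mK_\cF)=0$ for $m\neq 0$ (this is exactly the vanishing proven later in the paper, applied locally, or else can be checked directly on the cusp cycle via a Serre-duality/long-exact-sequence argument on $C$), while $f_*\cO_X(mK_\cF)=\cO_Y(mK_\cG)$ by definition of the pushforward, so the first term vanishes. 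Hence for $m\neq 0$: $a(y,mK_\cG)=0+0-1=-1$, and for $m=0$ trivially $a(y,0)=0$ since $\cO_Y$ is the structure sheaf. I expect the \textbf{main obstacle} to be twofold: (i) justifying cleanly that $R^1f_*\cO_X(mK_\cF)=0$ for $m\neq 0$ in the local cusp setting without circularity---this may require restricting to a thickened cusp cycle $Z$ and running an induction on subcycles as in Artin's rational-singularity arguments, using that $mK_\cF$ restricted to each $C_i$ has degree $0$ and that the cusp cycle $Z$ satisfies $K_X\cdot Z=0$, $Z^2=0$ so that $\mathcal{O}_Z(mK_\cF)$ is a degree-zero line bundle on each component glued around the cycle, whose $H^1$ one controls; and (ii) pinning down the exact integral/rational structure of $f^*(mK_\cG)$ on the cusp resolution so that the quadratic term in \eqref{contribution-formula} genuinely vanishes---here I would lean on Lemma \ref{crepant}-type reasoning and the fact that the negative part $N$ in the Zariski decomposition of the cusp resolution is the cusp cycle with $P\cdot N_i=0$, which makes all cross terms disappear. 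Once these two points are secured, the computation collapses to $-1$ as claimed; I would also double-check the borderline case $m=0$ and record that, unlike the $\QQ$-Gorenstein cases, here $a(y,mK_\cG)$ does not depend on a congruence class of $m$, reflecting that $mK_\cG$ fails to be Cartier for every $m\neq 0$.
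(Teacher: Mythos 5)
Your overall line of attack is the same as the paper's: reduce to the minimal resolution $f\colon (X,C)\to(Y,y)$ of the cusp, feed $\tilde D=f^*(mK_{\cG})$ into formula \eqref{contribution-formula}, observe that $c_1(y,\tilde D)=0$ so the quadratic term drops out, obtain $a(y,mK_{\cG})=\dim R^1f_*\cO_X(f^*(mK_{\cG}))-1$, and then prove the $R^1$-vanishing for $m\ne 0$. That is exactly what the paper does (the reduction is Proposition~\ref{cusps}; the computation is isolated in Lemma~\ref{RR-cusp}). But there are two local errors and one genuine gap in your write-up.

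The two errors are small but would trip up a careful reader filling in the details. First, the crepancy statement: you write $f^*K_{\cG}=K_{\cF}-C$, ``i.e.\ the discrepancy of $\cF$ along $C$ is exactly $-1$ on each component.'' In fact $K_{\cF}=f^*K_{\cG}$ with no correction at all: since $K_{\cF}\cdot C_i=0$ for all $i$ and $f^*K_{\cG}\cdot C_i=0$ by definition of the Mumford pullback, the difference $K_{\cF}-f^*K_{\cG}$ is supported on $C$ and is numerically trivial against every $C_i$, hence vanishes by negative definiteness of the exceptional intersection form. Second, the relevant numerology on the cycle $Z=\sum C_i$ is not $Z^2=0$ (which fails for a cusp) but $K_X=-Z$ (the surface cusp is Gorenstein with trivial local canonical class), which by adjunction gives $\omega_Z\simeq\cO_Z$; this is what makes the Serre-duality reduction $R^1f_*\cO_X(f^*D)\simeq H^1(Z,\cO_Z(f^*D))$, dual to $H^0(Z,\cO_Z(-f^*D))$, work.

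The genuine gap is the step you flag as your ``main obstacle (i)'': showing $H^0(Z,\cO_Z(-f^*D))=0$ whenever $D$ is not Cartier. Your sketch (``induction on subcycles \ldots degree-zero line bundle \ldots whose $H^1$ one controls'') points in the right direction but does not close the argument, and one cannot invoke the paper's foliated Grauert--Riemenschneider vanishing here because that result (Theorems~\ref{strong-vanishing} and \ref{crepant-vanishing}) is itself built on this computation. The paper's actual argument is as follows. First, $R^1f_*\cO_X(K_X+f^*D)=0$ by ordinary relative vanishing, so the short exact sequence
$$0\to\cO_X(f^*D-Z)\to\cO_X(f^*D)\to\cO_Z(f^*D)\to0$$
gives $R^1f_*\cO_X(f^*D)\simeq H^1(Z,\cO_Z(f^*D))$, which is dual to $H^0(Z,\cO_Z(-f^*D))$ since $\omega_Z\simeq\cO_Z$. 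Since $D$ is not Cartier, $\cO_Z(f^*D)\not\simeq\cO_Z$ by \cite[Theorem~4.2]{Sa84}. Now suppose there were a nonzero map $\varphi\colon\cO_Z\to\cO_Z(-f^*D)$ with image $F$. If $\Supp F=C$ then $\varphi$ is injective with torsion cokernel of trivial Hilbert polynomial, hence an isomorphism, a contradiction. Otherwise there is a component $C_i\not\subset\Supp F$ meeting $\Supp F$; then $\varphi|_{C_i}=0$, so $\varphi|_{C_j}$ vanishes at the point $C_i\cap C_j$ for the adjacent component $C_j\subset\Supp F$, forcing $\varphi|_{C_j}=0$ (a nonzero map $\cO_{C_j}\to\cO_{C_j}$ cannot vanish at a point), contradicting $C_j\subset\Supp F$. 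This shows $H^0(\cO_Z(-f^*D))=0$, hence $R^1f_*\cO_X(f^*D)=0$ and $a(y,mK_{\cG})=-1$ for $m\ne0$. This component-by-component propagation around the cycle is the step your proposal would need to supply to be complete.
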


\begin{proof}
By \cite[Theorem III.3.2]{McQ08} $Y$ has a cusp singularity with $K_{\cF}\cdot C_i=0$ for all $i$. Here $f: (X, C)\to (Y, y)$ is the minimal resolution and $\cF =f^*\cG$.
Since $K_{\cG}=f_*K_{\cF}$ as divisors, this implies $K_{\cF}=f^*K_{\cG}$.
By \cite[Theorem IV.2.2]{McQ08} the divisor $K_{\cG}$ is not $\QQ$-Cartier, so the assertion follows by applying  Lemma \ref{RR-cusp} to $mK_{\cG}$.
\end{proof}

\medskip

Let $f: (X, C)\to (Y, y)$ be the minimal resolution of a cusp singularity. Let us set $Z=\sum _{i=1}^r C_i$, the sum of all exceptional curves. Then $K_X=f^*K_Y-Z=-Z$ as $Y$ is Gorenstein at $y$.

\begin{Lemma}\label{RR-cusp}
Let $D$ be a Weil divisor on $Y$ such that $f^*D\cdot C_i=0$ for all $i$. Then
$$a(y, D)=
\left\{
\begin{array}{cl}
0& \hbox{ if  $D$ is Cartier, }  \\
-1&\hbox{ if $D$ is not Cartier. } \\
\end{array}
\right.
$$
\end{Lemma}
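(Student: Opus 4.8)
Part (1) requires nothing new: if $D$ is Cartier at $y$ then $a(y,D)=0$ by the remark following Theorem~\ref{RR}. So the content is part (2), and the plan is as follows. Put $\tilde D:=f^*D$; by hypothesis $\tilde D$ is an \emph{integral} divisor on $X$ with $\tilde D\cdot C_i=0$ for all $i$ and $f_*\tilde D=D$. Since $\deg\cO_{C_i}(\tilde D)=\tilde D\cdot C_i=0$ for every exceptional curve $C_i$ and the intersection matrix $(C_i\cdot C_j)$ is negative definite, the cycle $c_1(y,\tilde D)$ of \S\ref{s-RR} vanishes, so the quadratic term in \eqref{contribution-formula} drops out and the formula reads
\[
a(y,D)=\dim\,(\cO_Y(D)/f_*\cO_X(\tilde D))_y+\dim\,(R^1f_*\cO_X(\tilde D))_y-\dim\,(R^1f_*\cO_X)_y .
\]
It therefore suffices to establish: (i) $f_*\cO_X(\tilde D)=\cO_Y(D)$; (ii) $\dim\,(R^1f_*\cO_X)_y=1$; and (iii) $R^1f_*\cO_X(\tilde D)=0$, where in (iii) we use that $D$ is not Cartier.

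For (i) I would argue that every local section $s$ of $\cO_Y(D)$ near $y$ pulls back to a section of $\cO_X(\tilde D)$; since $f_*\cO_X(\tilde D)\subseteq\cO_Y(D)$ and the two sheaves already agree over $Y\setminus\{y\}$, this yields equality. The point is that $f^*\Delta\ge 0$ for every effective Weil divisor $\Delta$ on $Y$: the exceptional part $N$ of $f^*\Delta$ satisfies $N\cdot C_j\le 0$ for all $j$, hence $N\ge 0$ by negative definiteness (the lemma already invoked for Zariski decomposition in \S\ref{Mumford}). Applying this to $\Delta=\operatorname{div}_Y(s)+D\ge 0$ gives $\operatorname{div}_X(f^*s)+\tilde D=f^*\Delta\ge 0$, as required.

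The heart of the argument is (ii) and (iii), which I would handle together via the theorem on formal functions, working on the connected reduced curve $Z=\sum C_i$. Since $Y$ is Gorenstein at $y$ and $K_X=-Z$, adjunction gives $\chi(\cO_Z)=-\tfrac12(Z^2+K_X\cdot Z)=0$ and $\omega_Z\cong\cO_X(K_X+Z)|_Z\cong\cO_Z$, so $Z$ is a connected Gorenstein curve of arithmetic genus $1$ with trivial dualizing sheaf. The engine is the vanishing statement: \emph{if $M$ is a line bundle on $Z$ with $\deg(M|_{C_i})\ge 0$ for all $i$ and $\deg M>0$, then $H^1(Z,M)=0$}. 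This follows from Serre duality $H^1(Z,M)^{\ast}\cong H^0(Z,M^{-1})$ together with the elementary fact that a line bundle having non-positive degree on every component of a connected curve and negative total degree has no nonzero global section (on the union of the components where such a section is nonzero it is nowhere vanishing, and by connectedness this forces the bundle to be trivial, contradicting negative total degree). Applying this with $M=\cO_Z(-jZ)$ for $j\ge 1$ — degree $-jZ\cdot C_i\ge 0$ on $C_i$ and total degree $-jZ^2>0$ since $Z^2<0$ — together with $H^1(Z,\cO_Z)\cong\CC$, the exact sequences $0\to\cO_Z(-(n-1)Z)\to\cO_{nZ}\to\cO_{(n-1)Z}\to 0$ give $H^1(nZ,\cO_{nZ})\cong\CC$ with the transition maps isomorphisms; by formal functions this is (ii).

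For (iii) write $\cL=\cO_X(\tilde D)$; since $\tilde D\cdot C_i=0$, the restriction $\cL|_Z$ has degree $0$ on every component of $Z$. I would first show that $D$ not Cartier at $y$ forces $\cL|_Z\not\cong\cO_Z$: if $\cL|_Z\cong\cO_Z$, then, the obstruction to lifting a trivialization from $(n-1)Z$ to $nZ$ lying in $H^1(Z,\cO_Z(-(n-1)Z))=0$, we obtain $\cL|_{nZ}\cong\cO_{nZ}$ for all $n$; hence $\cL$ is trivial along the formal completion of the fibre, so on an \'etale (or analytic) neighbourhood of it, and then $\cO_Y(D)=f_*\cL$ is free near $y$ — contradiction. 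Given $\cL|_Z\not\cong\cO_Z$, its inverse is again a nontrivial line bundle of degree $0$ on each component, so $H^1(Z,\cL|_Z)=0$ by the same Serre-duality argument; feeding this and the vanishings $H^1(Z,\cL|_Z(-jZ))=0$ for $j\ge 1$ into the sequences $0\to\cL|_Z(-(n-1)Z)\to\cL|_{nZ}\to\cL|_{(n-1)Z}\to 0$ gives $H^1(nZ,\cL|_{nZ})=0$ for all $n$, hence $R^1f_*\cL=0$ by formal functions. Combining (i)--(iii) with the reduced formula gives $a(y,D)=0+0-1=-1$. The step I expect to be the main obstacle is (iii): pinning down the dichotomy ``$D$ Cartier at $y\iff\cL|_Z\cong\cO_Z$'' and executing the formal-functions bootstrap carefully; (i) and (ii) are comparatively routine once the above vanishing lemma is in hand.
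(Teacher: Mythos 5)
Your proposal is correct and reaches the same final reduction as the paper, but it runs the argument through the theorem on formal functions rather than quoting Sakai's results. Both approaches begin identically: $c_1(y,f^*D)=0$ kills the quadratic term of formula~\eqref{contribution-formula}, leaving $a(y,D)=\dim R^1f_*\cO_X(f^*D)-1$ once $f_*\cO_X(f^*D)=\cO_Y(D)$ and $\dim(R^1f_*\cO_X)_y=1$ are known. For the first of these the paper simply cites Sakai's projection formula, and for the second it implicitly uses that a cusp is a (minimally elliptic) singularity with $p_g=1$; you instead derive both from negative definiteness and from the tower $\{H^1(nZ,\cO_{nZ})\}$, which is more self-contained. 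The sharper divergence is in proving $R^1f_*\cO_X(f^*D)=0$: the paper invokes a Grauert--Riemenschneider-type vanishing ($R^1f_*\cO_X(K_X+f^*D)=0$) to cut down to the single curve $Z$, and then cites Sakai's Theorem~4.2 for the implication ``$D$ not Cartier $\Rightarrow\cO_Z(f^*D)\not\cong\cO_Z$''; you instead bootstrap through the infinitesimal thickenings $nZ$ and prove the Cartier criterion directly via the vanishing of the obstruction groups $H^1(Z,\cO_Z(-(n-1)Z))$. The one point the two proofs genuinely share is the core computation $H^0(Z,\cO_Z(-f^*D))=0$: you obtain it via Serre duality on $Z$, the paper obtains it directly, but both rest on the same observation that a section of a line bundle of degree zero on every component of a connected cycle of rational curves is nowhere vanishing, hence a trivialization. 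Net effect: your route is longer but avoids black boxes (Sakai's Theorems 2.1, 2.2 and 4.2); the paper's is shorter but relies on them.

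One small imprecision worth tightening. In step (iii) you write that $H^1(Z,\cL|_Z)=0$ follows ``by the same Serre-duality argument,'' but the ``engine'' statement you formulated requires \emph{negative} total degree, whereas here $\cL|_Z^{-1}$ has total degree $0$. What you actually need is: a \emph{nontrivial} line bundle on the (connected) cycle $Z$ of rational curves with degree $\le 0$ on every component has no nonzero global sections. Your parenthetical proof of the engine lemma in fact establishes exactly this (it shows that any nonzero section is nowhere vanishing, hence trivializes the bundle), so the gap is purely in the packaging; just restate the vanishing lemma with ``nontrivial'' as the hypothesis rather than ``negative total degree,'' and then (iii) goes through as you describe.
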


\begin{proof}
By assumption we have $c_1(y, f^*D)=0$. Moreover, $f_*\cO_X (f^*D)=\cO_Y (D)$ by Sakai's projection formula (see \cite[Theorem 2.1]{Sa84}).
So  by formula (\ref{contribution-formula}) we have
$$a(y, D)= \chi (y, \cO_X (f^*D))- \chi (y, \cO_X)= \dim R^1f_*\cO_X (f^*D)- 1.$$
If $D$ is Cartier then $\cO_X (f^*D)\simeq \cO_X$
and the assertion is clear. So in the following we assume that $D$ is not Cartier.

Let us note that $R^1f_*\cO_X (f^*D-Z)= R^1f_*\cO_X (K_X+f^*D)=0$ (see, e.g., \cite[Theorem 2.2]{Sa84}).
Using the short exact sequence
$$0\to \cO_X (f^*D-Z)\to \cO_X (f^*D)\to \cO_Z (f^*D)\to 0,$$
we get an isomorphism $R^1f_*\cO_X (f^*D)\simeq H^1(Z,  \cO_Z (f^*D))$.
Note that $H^1(Z,  \cO_Z (f^*D))$ is dual to $H^0(Z,  \cO_Z (K_Z-f^*D))=H^0(\cO_Z (-f^*D))$.
By assumption $D$ is not Cartier, so by \cite[Theorem 4.2]{Sa84} $\cO_Z (f^*D)\not\simeq \cO_Z$.

Assume that  $H^0(\cO_Z (-f^*D))\ne 0$.  Then we have a nontrivial map $\varphi: \cO_Z\to \cO_Z (-f^*D)$.
Let $F$ denote its image. If the support of $F$ is equal to $C$, then the kernel of $\varphi$ is trivial. But since the Hilbert polynomials of $\cO_Z$  and $ \cO_Z (-f^*D)$ are the same (with respect to any ample polarization), the cokernel has trivial Hilbert polynomial. But the cokernel of $\varphi$ is a torsion sheaf, so it must be $0$ and $\varphi$ is an isomorphism, a contradiction.
This proves that there exists an exceptional curve $C_i$ not contained in
the support of $F$ but intersecting it non-trivially. But then $\varphi|_{C_i}:  \cO_{C_i}\to \cO_{C_i}(-f^*D)$
factors through the torsion sheaf $F_{C_i}$, so it is the zero map. Now let us  restrict $\varphi$ to the curve $C_{j}$ intersecting $C_i$ and contained in the support of $F$. Then  $\varphi|_{C_j}:  \cO_{C_j}\to \cO_{C_j}(-f^*D)\simeq \cO_{C_i}$ vanishes at the point $C_i\cap C_j$, so it must also be the zero map. But  $\varphi|_{C_j}$ 
is a composition of the surjection $ \cO_{C_j}\to F_{C_j}$ and a generic injection 
$F_{C_j}\to \cO_{C_j}(-f^*D)$, a contradiction.

This implies that  $H^0(\cO_Z (-f^*D))= 0$
and hence we have $R^1f_*\cO_X (f^*D)=0$, which implies the required equality.
\end{proof}

\section{Birational boundedness of weak nef models}

\begin{Definition}\label{d-wnm}
A  normal complete foliated surface $(Y, \cG)$ is called a weak nef model if the following conditions are satisfied:
\begin{enumerate}
\item $\cG$ has at most canonical singularities, 
\item  at singular points of $Y$ the foliation $\cG$ has only terminal singularities,
\item $K_{\cG}$ is nef.
\end{enumerate}
\end{Definition}

Let us note that by Theorem \ref{Artin} every weak nef model is projective.
By the proof of \cite[Proposition 5.1]{Br15} and by \cite[Theorem 8.1]{Br15} if $(X, \cF)$ is a smooth projective surface and $\cF$ has only reduced singularities then there exists a morphism $(X, \cF)\to (Y, \cG)$ to a weak nef model
(and such that $\cG$ has only reduced singularities on the smooth locus of $Y$). Therefore every birational equivalence class of foliations on normal surfaces contains at least one weak nef model. Let us remark that  birational equivalence classes of foliations tend to contain many weak nef models. This follows from the fact that a blow up of a weak nef model at a point where the surface is smooth but the foliation is singular, is still a weak nef model.

\medskip

\begin{Definition}
Let $Y$ be a normal complete surface. The \emph{index} $i(Y)$ of $Y$ is the smallest positive integer $m$ such that for every Weil divisor $D$ on $Y$ its multiple $mD$ is Cartier (if $Y$ is not $\QQ$-factorial, then we set $i(Y)=\infty$). The  \emph{index} $i(\cG)$ of a foliation $\cG$ on $Y$  is the smallest positive integer $m$ such that $mK_{\cG}$ is Cartier (if $K_{\cG}$ is not $\QQ$-Cartier, then we set $i(\cG)=\infty$). The \emph{$\QQ$-index} $i_{\QQ}(\cG)$ of a foliation $\cG$ on $Y$  is the smallest positive integer $m$ such that $mK_{\cG}$ is Cartier at all $\QQ$-Gorenstein points of the foliation.
\end{Definition}

\begin{Proposition}\label{sing-nef-model}
Let us fix a function $P: \ZZ_{\ge 0}\to \ZZ$. Then there exist some constants $B_1$, $B_2$ and $B_3$ (depending only on $P$) such that if $(Y, \cG)$ is a weak nef model with Hilbert function $\chi (Y, \cO_Y(mK_{\cG}))=P(m)$  for all $m\in \ZZ _{\ge 0}$ then 
$K_{\cG}^2=B_1$, $K_{\cG}\cdot K_Y=B_2$ and $\chi (Y,\cO_Y)=B_3$. Moreover, there exists some constants $C_1$ and $C_2$ such that the number of singularities of $Y$ is $\le C_1$ and the index of $Y$ is $\le C_2$.
\end{Proposition}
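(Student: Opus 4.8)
The plan is to apply the Riemann--Roch formula of Theorem \ref{RR} to the divisors $mK_{\cG}$, $m\ge 0$, and to exploit that all the local contributions come from terminal foliation singularities, whose contributions are computed in Corollary \ref{C-ter}. Since $Y$ is a normal complete surface it has finitely many singular points $y_1,\dots,y_s$, and every singular point of $Y$ is in particular a singular point of $\cG$, so condition (2) in Definition \ref{d-wnm} forces each $(Y,\cG,y_k)$ to be a terminal foliation singularity. By \S\ref{terminal}, $(Y,y_k)$ is then a cyclic quotient singularity of some type $\frac1{n_k}(1,q_k)$ with $n_k\ge 2$, and by Corollary \ref{C-ter} the reflexive sheaf $\cO_Y(mK_{\cG})$ is locally isomorphic to $\cL_{\overline{mq_k}}$ near $y_k$; in particular $a(y_k,mK_{\cG})=a(y_k,\cL_{\overline{mq_k}})$ depends only on $m$ modulo $n_k$. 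Theorem \ref{RR} therefore yields
$$P(m)=\frac{K_{\cG}^2}{2}\,m^2-\frac{K_{\cG}\cdot K_Y}{2}\,m+\chi(Y,\cO_Y)+\psi_Y(m),$$
where $\psi_Y(m)=\sum_{k=1}^s a(y_k,mK_{\cG})$ is a periodic, hence bounded, function of $m\ge 0$.

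Because the periodic term is bounded, the asymptotics of $P$ recover $K_{\cG}^2=\lim_{m\to\infty}2P(m)/m^2$ and then $K_{\cG}\cdot K_Y=-\lim_{m\to\infty}2\big(P(m)-\tfrac12K_{\cG}^2m^2\big)/m$, while $\chi(Y,\cO_Y)=P(0)$ since $\cO_Y(0\cdot K_{\cG})=\cO_Y$. Thus $B_1:=K_{\cG}^2$, $B_2:=K_{\cG}\cdot K_Y$ and $B_3:=\chi(Y,\cO_Y)=P(0)$ are determined by $P$ alone, and consequently so is the periodic function $\psi:=\psi_Y$, explicitly $\psi(m)=P(m)-\tfrac12B_1m^2+\tfrac12B_2m-B_3$.

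For the two remaining bounds I would evaluate $\psi$ at $m=1$. By Corollary \ref{C-ter} we have $a(y_k,K_{\cG})=-\frac{n_k-1}{2n_k}$, so $\psi(1)=-\sum_{k=1}^s\frac{n_k-1}{2n_k}$. Since $\tfrac14\le\frac{n_k-1}{2n_k}<\tfrac12$ for $n_k\ge 2$, this forces $-2\psi(1)<s\le-4\psi(1)$, so the number of singularities satisfies $s\le C_1:=\lfloor -4\psi(1)\rfloor$, a bound depending only on $P$. For the index, recall that since the $(Y,y_k)$ are the cyclic quotient singularities $\frac1{n_k}(1,q_k)$ the surface $Y$ is $\QQ$-factorial and $i(Y)=\mathrm{lcm}(n_1,\dots,n_s)$, so it suffices to bound the $n_k$. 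Rewrite the identity for $\psi(1)$ as $\sum_{k=1}^s\frac1{n_k}=s+2\psi(1)$; for each of the finitely many admissible integer values of $s$ (all determined by $P$) the positive rational $v_s:=s+2\psi(1)$ is determined by $P$, and a standard finiteness argument for writing a fixed rational as a sum of a bounded number of unit fractions $1/n_k$ with $n_k\ge 2$ — at each stage the smallest remaining $n_k$ is bounded by the number of remaining terms divided by the remaining value, then induct on the number of terms — shows that the multiset $\{n_1,\dots,n_s\}$ ranges over a finite set depending only on $P$. Taking $C_2$ to be the largest least common multiple occurring in this set completes the proof.

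The only genuinely delicate step is the last one: converting the single rational invariant $\psi(1)$, together with the bound $s\le C_1$, into an effective bound on every $n_k$ and hence on $i(Y)$. Everything else is a routine combination of the Riemann--Roch formula of Theorem \ref{RR} with the local contribution of a terminal singularity computed in Corollary \ref{C-ter}.
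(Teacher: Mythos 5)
Your proposal is correct and follows essentially the same route as the paper's own proof: Riemann--Roch via Theorem \ref{RR}, the identification of $K_{\cG}^2$, $K_{\cG}\cdot K_Y$, $\chi(Y,\cO_Y)$ from the (bounded-fluctuation) quadratic expansion of $P$, the bound $\frac14\le -a(y,K_{\cG})<\frac12$ from Corollary \ref{C-ter} to bound $|\Sing Y|$, and then the finiteness of solutions to $\sum 1/n_k = s+2\psi(1)$ to bound the local indices (the paper isolates this last step as Lemma \ref{index-bound}, whose inductive proof you describe accurately). The only cosmetic difference is that you make the extraction of $B_1,B_2$ explicit via limits, where the paper simply reads them off as the quadratic and linear coefficients.
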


\begin{proof}
By Theorem \ref{RR} we have
$$P(m)=\chi (Y, \cO_Y(mK_{\cG})) =\frac{1}{2}mK_{\cG}(mK_{\cG}-K_Y)+\chi(Y, \cO_{Y})+\sum _{y\in \Sing Y}a(y, mK_{\cG}).$$ 
If $P$ is fixed then since $K_{\cG}^2$ is the quadratic term and $K_{\cG}\cdot K_Y$ is the linear term of $P$, both of them are fixed. Since $\chi(Y, \cO_{Y})=P(0)$, this number is also fixed. 

By assumption $\cG$ has only terminal singularities at singular points of $Y$.
So if $Y$ has a cyclic singularity   of type  $\frac{1}{n_y}(1,q_y)$  at $y\in \Sing Y$, then by Corollary \ref{C-ter}
 we have 
$$a(y, K_{\cG}) = - \frac {n_y-1}{2n_y} .$$
Now let us note that the number 
$$-\sum _{y\in \Sing Y}a(y,  K_{\cG})=-\chi (Y, \cO_Y(K_{\cG})) +\frac{1}{2}K_{\cG}(K_{\cG}-K_Y)+\chi(Y, \cO_{Y})$$
 is also fixed. We have
 $$-\sum _{y\in \Sing Y}a(y,  K_{\cG})=\sum _{y\in \Sing Y}\frac {n_y-1}{2n_y}\ge \frac{1}{4} |\Sing Y|, $$
 so the number of singularities of $Y$ is bounded. Now let us note that
 $$\sum _{y\in \Sing Y}\frac {1}{n_y}=  |\Sing Y|+ 2 \sum _{y\in \Sing Y}a(y, K_{\cG})$$
 assumes only a finite number of values. The proof of the proposition now follows from Lemma \ref{index-bound}.\end{proof}
 
 \begin{Lemma}\label{index-bound}
 Let us fix an integer $m$ and a rational number $c$. Then there exists only finitely many $m$-tuples $(n_1,...,n_m)$
 of positive integers $n_i$ such that $\sum _{i=1}^m \frac{1}{n_i}=c$.
 \end{Lemma}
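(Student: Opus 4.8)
The plan is to argue by induction on $m$. The key observation is that, although no individual coordinate $n_i$ need be bounded in a solution, the \emph{smallest} coordinate always is; so it suffices to count non-decreasing tuples and then account for reorderings.

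For $m=1$ the equation $1/n_1 = c$ has at most one positive integer solution, so the base case is clear. Assume the statement for $m-1$ and let $m \ge 2$. If $c \le 0$ there is nothing to prove, since every $1/n_i$ is strictly positive; so assume $c>0$. Every $m$-tuple of positive integers with $\sum_{i=1}^m 1/n_i = c$ is a permutation of a unique non-decreasing solution $n_1 \le n_2 \le \dots \le n_m$, and there are at most $m!$ such permutations, so it is enough to bound the number of non-decreasing solutions. For such a tuple we have $c = \sum_{i=1}^m 1/n_i \le m/n_1$, hence $n_1 \le m/c$, so $n_1$ ranges over the finite set $\{1, \dots, \lfloor m/c \rfloor\}$.

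For each fixed value $n_1 = a$ in this finite set, the remaining coordinates satisfy $\sum_{i=2}^m 1/n_i = c - 1/a$, which is a fixed rational number; by the inductive hypothesis there are only finitely many $(m-1)$-tuples $(n_2,\dots,n_m)$ with this prescribed reciprocal sum. Hence the set of non-decreasing solutions is a finite union (over the finitely many choices of $a$) of finite sets, therefore finite, and multiplying by $m!$ gives the bound on all $m$-tuples. I do not expect any genuine obstacle here: the only point that needs a moment's thought is to bound $\min_i n_i$ rather than a fixed coordinate, after which the induction (equivalently, a greedy recursive bound $n_1 \le m/c$, then $n_2 \le (m-1)/(c-1/n_1)$, and so on) runs automatically.
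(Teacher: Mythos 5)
Your proof is correct and follows essentially the same route as the paper: order the tuple non-decreasingly (WLOG, up to the $m!$ permutations), bound the smallest $n_1$ by $m/c$, and induct on $m$ by fixing $n_1$. The paper phrases the reduction a bit more tersely (it says ``without loss of generality'' and does not spell out the $c\le 0$ case or the multiplication by $m!$), but there is no substantive difference.
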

 
 \begin{proof}
 The proof is by induction on $m$. For $m=1$ the assertion is trivial, so let us assume that it holds for all $(m-1)$.
 Without loss of generality we can assume that $n_1\le ...\le n_m$. Then $c\le \frac{m}{n_1}$, so $n_1\le \frac{m}{c}$.
 But then $n_1$ can assume only finitely many values and by the induction assumption 
 for each fixed $n_1$ the equation $\sum _{i=2}^m \frac{1}{n_i}=c-\frac{1}{n_1}$ has only finitely many solutions.
 \end{proof}

\begin{Lemma} \label{pseudo}
Let $D_1$ be a nef and big $\QQ$-divisor on a normal projective surface $Y$.
Let $D_2$ be another $\RR$-divisor such that $D_2+\alpha D_1$ is nef   for some $\alpha \ge 0$. Then either
$-D_2\equiv \alpha D_1$ is nef  or
the $\RR$-divisor $\beta D_1-D_2$ is pseudoeffective, where
$$\beta= \frac {  {2}\, D_1\cdot D_2 } {D_1^2} +\alpha .$$

\end{Lemma}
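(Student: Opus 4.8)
The plan is to argue by contradiction on the pseudoeffectivity of $\beta D_1 - D_2$. Suppose $\beta D_1 - D_2$ is not pseudoeffective. By the duality between the pseudoeffective cone and the cone of movable curves (or, more elementarily, by the Zariski-decomposition machinery set up in \S\ref{Mumford} together with the Hodge index theorem, Lemma~\ref{HIT}), a divisor that is not pseudoeffective must have negative intersection with some nef and big class; the natural test class to use here is $D_1$ itself. So I expect the key inequality to be
$$D_1\cdot (\beta D_1 - D_2) < 0,$$
and indeed substituting $\beta = \frac{2\,D_1\cdot D_2}{D_1^2} + \alpha$ gives $\beta D_1^2 - D_1\cdot D_2 = 2\,D_1\cdot D_2 + \alpha D_1^2 - D_1\cdot D_2 = D_1\cdot D_2 + \alpha D_1^2 = D_1\cdot(D_2 + \alpha D_1) \geq 0$ by the nefness hypothesis on $D_2 + \alpha D_1$, so this test alone does not immediately force a contradiction. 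The correct strategy is therefore to test $\beta D_1 - D_2$ not against $D_1$ but to use the nef class $D_2 + \alpha D_1$ directly: since $D_2 + \alpha D_1$ is nef and $D_1$ is nef and big, one has $D_1\cdot(D_2+\alpha D_1)\geq 0$, and the Hodge index theorem applied to $D_1$ and $D_2+\alpha D_1$ gives $(D_2+\alpha D_1)^2\, D_1^2 \leq \bigl(D_1\cdot(D_2+\alpha D_1)\bigr)^2$.

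Here is the argument I would actually write. Set $M = D_2 + \alpha D_1$, which is nef. If $D_1\cdot M = 0$, then by the Hodge index theorem (Lemma~\ref{HIT}, using that $D_1^2>0$) we get $M^2 \leq 0$; combined with nefness of $M$ this forces $M^2 = 0$ and then $M$ is numerically proportional to $D_1$, hence $M\equiv 0$ (as $D_1^2>0$ and $D_1\cdot M=0$ are incompatible with a nonzero proportionality), giving $-D_2 \equiv \alpha D_1$, the first alternative. So assume $D_1\cdot M > 0$. Then $\beta = \frac{2\,D_1\cdot D_2}{D_1^2}+\alpha = \frac{2\,D_1\cdot M}{D_1^2} - \alpha$; I now claim $\beta D_1 - D_2 = \beta D_1 - M + \alpha D_1 = (\beta+\alpha)D_1 - M = \frac{2\,D_1\cdot M}{D_1^2}D_1 - M$ is pseudoeffective. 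This last divisor is exactly $2\,\mathrm{pr}_{D_1}(M) - M$ where $\mathrm{pr}_{D_1}$ is the orthogonal projection onto $\RR D_1$ in the Néron–Severi space equipped with the intersection form; since the intersection form has signature $(1,\rho-1)$ and $D_1$ spans a positive line, the reflection $v \mapsto 2\,\mathrm{pr}_{D_1}(v) - v$ preserves the closed positive cone containing $D_1$, and $M$ lies in that cone (it is nef with $D_1\cdot M>0$, hence $M$ pairs nonnegatively with every nef class and in particular $M$ is in the closure of the component of $\{v : v^2\geq 0\}$ containing $D_1$). Therefore its image $\beta D_1 - D_2$ lies in the same component of $\{v^2\geq 0\}$, so it is pseudoeffective, which is the second alternative.

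The main obstacle, and the step I would be most careful about, is the last one: making precise that the reflection across $D_1^\perp$ sends the relevant "positive" component of the light cone to itself, and that a nef divisor with $D_1\cdot M>0$ indeed lies in that component rather than being, say, a negative or off-cone class. Concretely one checks $(\beta D_1 - D_2)^2 = M^2 - 2\frac{(D_1\cdot M)^2}{D_1^2}+ \ldots$; a direct computation gives $(\beta D_1 - D_2)\cdot D_1 = \beta D_1^2 - D_2\cdot D_1 = D_1\cdot M > 0$ and $(\beta D_1 - D_2)^2 = (2\,\mathrm{pr}_{D_1}(M)-M)^2 = 4\,\mathrm{pr}_{D_1}(M)^2 - 4\,\mathrm{pr}_{D_1}(M)\cdot M + M^2 = M^2$ since $\mathrm{pr}_{D_1}(M)^2 = \mathrm{pr}_{D_1}(M)\cdot M = \frac{(D_1\cdot M)^2}{D_1^2}$. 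Actually $M^2 = (\beta D_1-D_2)^2$ need not be nonnegative a priori, so I would instead invoke the numerical characterization of pseudoeffectivity on a surface: a class $N$ is pseudoeffective if and only if $N\cdot H \geq 0$ for every nef $H$; and for any nef $H$, $(\beta D_1 - D_2)\cdot H = \frac{2\,D_1\cdot M}{D_1^2}(D_1\cdot H) - M\cdot H$, which I would bound below using the Hodge index inequality $(D_1\cdot H)(M\cdot D_1) \geq$ (something) $\cdot (D_1^2)(M\cdot H)$ — this is the one genuinely delicate estimate and I would derive it from Lemma~\ref{HIT} applied to the pair $(D_1, H)$ together with $M\cdot D_1 > 0$. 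Alternatively, and more cleanly, reduce to the projective-surface case, pass to a resolution where the Zariski decomposition of $\beta D_1 - D_2$ is available, and run the same orthogonal-projection argument there; since the statement only concerns numerical classes this loses nothing.
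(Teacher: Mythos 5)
Your core idea is correct and genuinely different from the paper's route, though it ends at the same underlying fact. The paper reduces to a smooth model and then simply quotes Lazarsfeld's Siu-type bigness criterion (\cite[Theorem 2.2.15]{Laz04}): if $D_1^2 > 2t\,D_1\cdot(D_2+\alpha D_1)$ then $D_1 - t(D_2+\alpha D_1)$ is big, takes the limiting value $t_0 = D_1^2/(2\,D_1\cdot(D_2+\alpha D_1))$, and rescales. Your approach instead recognises $\beta D_1 - D_2$ as the image of the nef class $M = D_2+\alpha D_1$ under the reflection $R(v) = 2\,\tfrac{v\cdot D_1}{D_1^2}D_1 - v$ across $\RR D_1$ in the N\'eron--Severi lattice. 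This is a nice geometric reformulation and it does work, but the two arguments both boil down to the same light-cone computation: the Siu-type criterion on a surface is precisely the statement that a class with nonnegative square and positive pairing with a nef and big class is pseudoeffective, and the assertion that $R$ maps the closed positive cone to itself encodes exactly the same inequality.

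Two places where you hedge unnecessarily or leave genuine gaps. First, your worry that ``$M^2 = (\beta D_1 - D_2)^2$ need not be nonnegative a priori'' is unfounded: $M$ is nef on a (complete normal) surface, and nef divisors on surfaces always satisfy $M^2\ge 0$ (e.g.\ because $M+\epsilon H$ is ample for ample $H$ and $\epsilon>0$, so $(M+\epsilon H)^2>0$, and let $\epsilon\to 0$). Since $R$ is an isometry, $(\beta D_1 - D_2)^2 = M^2\ge 0$, and $(\beta D_1 - D_2)\cdot D_1 = D_1\cdot M>0$, which already puts $R(M)$ in the closure of the positive cone. Second, the estimate you say you ``would derive from Lemma~\ref{HIT}'' is in fact the crux and deserves to be written out. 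The needed inequality is
$$2\,(D_1\cdot M)(D_1\cdot H) \;\ge\; D_1^2\,(M\cdot H)$$
for every nef $H$. Decompose $M = \tfrac{D_1\cdot M}{D_1^2}D_1 + M'$ and $H = \tfrac{D_1\cdot H}{D_1^2}D_1 + H'$ with $M', H'\in D_1^\perp$; since $D_1^\perp$ is negative definite one has $|M'\cdot H'|\le\sqrt{(M'^2)(H'^2)}$, and $M'^2 = M^2 - \tfrac{(D_1\cdot M)^2}{D_1^2}$ together with $M^2\ge 0$ gives $|M'^2|\le \tfrac{(D_1\cdot M)^2}{D_1^2}$ (likewise for $H$). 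Combining, $M'\cdot H'\le \tfrac{(D_1\cdot M)(D_1\cdot H)}{D_1^2}$, which rearranges to the displayed inequality. So your proof does close, but only after this Cauchy--Schwarz step is actually carried out; the paper sidesteps it by citing a theorem. Finally, you should have followed the paper in first pulling everything back to a resolution so that you are working with an honest hyperbolic N\'eron--Severi lattice; this costs nothing since the statement is purely numerical.
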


\begin{proof}
By pulling back the relevant divisors to an appropriate resolution of $Y$, we may assume that $Y$ is smooth.  By \cite[Theorem 2.2.15]{Laz04} we know that if 
$$D_1^2> 2 tD_1\cdot ( D_2+\alpha D_1)$$
then $D_1-t (D_2 +\alpha D_1)$ is big. In particular, since 
the limit of big divisors is pseudoeffective, we see that if 
$$D_1^2\ge 2 tD_1\cdot ( D_2+\alpha D_1) \leqno{(*)}$$
then $D_1-t (D_2 +\alpha D_1)$ is pseudoeffective.

If $D_1\cdot (D_2+\alpha D_1) = 0$ then by the Hodge index theorem 
$$(D_1\cdot (D_2+\alpha D_1) )^2= 0\ge D_1^2  (D_2+\alpha D_1)^2\ge 0.$$
Since $D_1^2>0$ this implies that $(D_2+\alpha D_1)$ is numerically trivial, so $-D_2$ is nef. 
In the following we can therefore assume that $D_1\cdot (D_2+\alpha D_1) > 0$.

In this case  let us set
$$t_0= \frac{D_1^2} { {2}\, D_1\cdot (D_2+\alpha D_1)} .$$
Inequality $(*)$  is satisfied for all $0\le t\le t_0$.
In particular, $D_1-t _0(D_2 +\alpha D_1)$ is pseudoeffective.
This implies that $\beta D_1-D_2=\frac{1-t_0\alpha}{t_0}D_1-D_2$ is also pseudoeffective.
\end{proof}

\begin{Theorem}\label{birational-boundedness-nef}
Let us fix a function $P:\ZZ_{\geq 0}\to \ZZ$ and consider the family of  weak nef models $(Y, \cG)$
such that $\cG$ is of general type and $\chi (Y, \cO _Y(mK_{\cG}))=P(m)$ for all $m\ge 0$. 
Then there exists a constant $N_1$ depending only on $P$ such that for all $(Y, \cG)$ in the above family,
the linear system $|mK_{\cG}|$ gives a birational map for all $m\ge N_1$.
\end{Theorem}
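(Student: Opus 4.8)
The plan is to convert the boundedness of the numerical invariants furnished by Proposition~\ref{sing-nef-model} into an effective lower bound for $h^0(mK_\cG)$, and then to separate pairs of general points by the Reider-type Theorem~\ref{Reider-type}. Write $B_1=K_\cG^2$, $B_2=K_\cG\cdot K_Y$, $B_3=\chi(Y,\cO_Y)$; by Proposition~\ref{sing-nef-model} these, together with $\#\Sing Y$ and $i(Y)$, are bounded in terms of $P$, and $B_1={\rm vol}(K_\cG)>0$ since $\cG$ is of general type. Lemma~\ref{HIT} gives $K_Y^2\le B_2^2/B_1$, and one also needs a lower bound $K_Y^2\ge -B_4$ with $B_4=B_4(P)$: this comes from passing to the minimal resolution $\nu\colon X\to Y$ (bounded, since $\Sing Y$ is) and bounding $K_X^2$, using that $\nu^*K_\cG$ is nef and big with bounded self-intersection and bounded intersection with $K_X$, while every $(-1)$-curve $C$ on $X$ satisfies $\nu^*K_\cG\cdot C\ge 1$ by Lemma~\ref{HIT}, so only boundedly many curves can be contracted.

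Next, Theorem~\ref{RR} together with the uniform bounds on the local contributions $a(y,mK_\cG)$ obtained in Section~\ref{contributions} (at the terminal points of $Y$, $a(y,mK_\cG)=a(y,\cL_{\overline{mq_y}})$ by Corollary~\ref{C-ter}, with $|a(y,mK_\cG)|$ bounded in terms of $n_y\le i(Y)$) gives
$$\chi(Y,\cO_Y(mK_\cG))=\tfrac12(m^2B_1-mB_2)+B_3+\sum_{y\in\Sing Y}a(y,mK_\cG)\ \ge\ \tfrac{B_1}{2}m^2-c_1m-c_2$$
with $c_1,c_2$ depending only on $P$. Moreover $h^2(Y,\cO_Y(mK_\cG))=h^0(Y,\cO_Y(K_Y-mK_\cG))=0$ once $m>B_2/B_1$, since an effective divisor $\sim K_Y-mK_\cG$ would meet the nef divisor $K_\cG$ non-negatively. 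Hence, for $m$ past a bound depending only on $P$, $h^0(Y,\cO_Y(mK_\cG))\ge\tfrac{B_1}{2}m^2-c_1m-c_2$.

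To prove birationality of $|mK_\cG|$ it suffices to separate two general closed points $p_1,p_2$ (necessarily in the smooth locus of $Y$, where $\cO_Y(mK_\cG)$ is invertible). Apply Theorem~\ref{Reider-type} with $L=mK_\cG-K_Y$ and $\zeta=\{p_1,p_2\}$, so $K_Y+L\sim mK_\cG$ and $\deg\zeta=2$. First, $L$ is big for $m$ past a bound depending only on $P$: this is the bookkeeping role of Lemma~\ref{pseudo}, applied with $D_1=K_\cG$ and $D_2=-K_Y$ (after replacing $-K_Y$ by a nef divisor differing from it by a bounded multiple of $K_\cG$), which yields a bounded $\beta$ with $\beta K_\cG-K_Y$ pseudoeffective, whence $mK_\cG-K_Y=(m-\beta)K_\cG+(\beta K_\cG-K_Y)$ is big for $m>\beta$. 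Second, writing the Zariski decomposition $L=P+N$, one has $P\cdot N=0$, hence $P^2=(mK_\cG-K_Y)^2-N^2\ge m^2B_1-2mB_2-B_4$, so $P^2>8$ (indeed $>16$) for $m$ past a bound depending only on $P$. Then Theorem~\ref{Reider-type} gives that either $H^0(Y,\cO_Y(mK_\cG))\to\cO_\zeta(mK_\cG)$ is surjective --- which separates $p_1$ from $p_2$ --- or there is an irreducible curve $C$ through $p_1,p_2$ with $P\cdot C\le 4$. In the latter case, as $p_1,p_2$ are general, $C$ moves in an algebraic family covering $Y$, so a general such $C$ is irreducible with $C^2\ge 0$; if $C^2\ge 1$, Lemma~\ref{HIT} applied to $P$ and $C$ gives $P^2\le P^2C^2\le(P\cdot C)^2\le 16$, a contradiction; if $C^2=0$, the $C$ are fibres of a fibration $Y\to B$, and one derives a contradiction for $m$ large from the numerics ($K_\cG\cdot C>0$, since a $K_\cG$-trivial covering family would force $K_\cG^2=0$) once the genus of $C$ is bounded in terms of $P$. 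Taking $N_1$ to be the maximum of the finitely many thresholds produced above proves the theorem.

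The technical heart is obtaining all these thresholds uniformly in the family. The pseudoeffectivity of $mK_\cG-K_Y$ and the bound $P^2\gg 0$ rest on uniform control of $K_Y^2$ and of the negative part $N$, hence on boundedness of the resolution of $Y$; and eliminating the ``bad curve'' alternative in the Reider dichotomy reduces, in the fibred case $C^2=0$, to bounding the genus of a fibre --- this is where the bigness of $K_\cG$ together with the fixed Hilbert polynomial (via a slope/Arakelov-type inequality and surface classification) are used decisively. I expect this last point, the uniform elimination of fibred bad loci, to be the main obstacle.
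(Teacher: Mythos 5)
Your proposal identifies the right toolbox (Riemann--Roch with local contributions, Lemma~\ref{pseudo}, the Reider-type Theorem~\ref{Reider-type}), but it runs Reider on the weak nef model $Y$ itself, and this cannot work uniformly. The assertion that $K_Y^2\ge -B_4(P)$ is false: blowing up a smooth point of $Y$ at which the foliation is singular produces again a weak nef model with the \emph{same} Hilbert function (the exceptional curve $E$ is crepant, $K_{\cG'}=f^*K_{\cG}$ with $K_{\cG'}\cdot E=0$, and $\chi(mK_{\cG})$ is unchanged since $\cO(mK_{\cG'})=f^*\cO(mK_{\cG})$ and $Rf_*\cO=\cO$), while $K_Y^2$ drops by $1$; iterating at the new foliation singularities on $E$ shows $K_Y^2$ is unbounded below in the family. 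In the same vein, your claim that every $(-1)$-curve $C$ on the minimal resolution has $\nu^*K_{\cG}\cdot C\ge 1$ ``by Lemma~\ref{HIT}'' is wrong --- these very $E$'s have $K_{\cG}\cdot E=0$, and more importantly $(K_Y+\alpha K_{\cG})\cdot E=-1$, so the nefness hypothesis of Lemma~\ref{pseudo} (which, note, needs $D_2=K_Y$, not $-K_Y$) can never be arranged on $Y$ for bounded $\alpha$. Finally, you leave the Reider ``bad curve'' alternative in the $C^2=0$ fibration case unresolved, and you flag it yourself as the main obstacle; no amount of genus-bounding is supplied.

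The paper's proof avoids all of this by first passing to the canonical model $f\colon Y\to Y'$. This is harmless because $f_*\cO_Y(mK_{\cG})=\cO_{Y'}(mK_{\cG'})$ (Sakai), so $|mK_{\cG}|$ and $|mK_{\cG'}|$ define the same map, and $K_{\cG}^2$, $K_{\cG}\cdot K_Y$ are preserved. On $Y'$, $K_{\cG'}$ is numerically ample with index $i(\cG)$ bounded by Proposition~\ref{sing-nef-model}, so $i(\cG)K_{\cG'}\cdot C\ge 1$ for \emph{every} curve. The nefness of $K_{Y'}+3i(\cG)K_{\cG'}$ is then obtained by going to the minimal resolution of the cusps (projective, rational singularities) and invoking Fujino's log cone theorem, after which Lemma~\ref{pseudo} yields a pseudoeffective $\gamma K_{\cG'}-K_{Y'}$ with explicit bounded $\gamma$. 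The decisive simplification over your proposal is that, with $L=(4i(\cG)+1+a)K_{\cG'}+(\lceil\gamma\rceil K_{\cG'}-K_{Y'})$ and Zariski decomposition $L=P+N$, one has $P^2>16$ \emph{and} $P\cdot C\ge(4i(\cG)+1)K_{\cG'}\cdot C>4$ for every curve $C$ outside the fixed negative part of $\gamma K_{\cG'}-K_{Y'}$; this eliminates the Reider alternative outright for all such $C$, with no covering-family or fibration analysis. You should restructure your argument so that all Reider-type estimates are carried out on the canonical model.
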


\begin{proof}
Let $f:Y\to Y'$ be the morphism to the canonical model of $(Y,\cG )$ and $\cG'$ the induced foliation on $Y'$.
By Lemma \ref{crepant} we have $K_\cG=g^*K_{\cG'}$. { By Proposition \ref{sing-nef-model}, the index $i(\cG)$ of $K_\cG$ is bounded. By Lemma \ref{l-can},  $K_{\cG '}$ is numerically ample and so
 \[ i(\cG)K_{\cG '}\cdot C=i(\cG)K_{\cG}\cdot f^{-1}_*C\geq 1\]
for any  curve $C\subset Y'$.  We claim that $K_{Y'}+{ 3}i(\cG)K_{\cG '}$ is nef.
To this end, let $\nu :Y''\to Y'$ be the morphism obtained by taking the minimal resolution of the cusps of $Y'$.
By \cite[Theorem III.3.2]{McQ08} (see also Case 4 in \S 5), $Y''$ has rational singularities and so by  Lemma \ref{Artin}, $Y''$ is projective. Note that over each cusp of $Y'$, the exceptional curve corresponds to a cycle of smooth rational curves or to a rational curve with one node.
We have $K_{Y''}+E=\nu ^* K_{Y'}$ and $K_{\cG''}=\nu ^*K_{\cG'}$.
By \cite[Proposition 3.8]{Fuj12}, every $K_{Y''}$ negative extremal ray is spanned by a rational curve $C$ with $0<-K_{Y''}\cdot C\leq 3$ and therefore $K_{Y''}+{ 3}i(\cG)K_{\cG ''}$ is nef (see \cite[Theorem 3.2]{Fuj12}). But then $K_{Y'}+3i(\cG)K_{\cG'}=\nu _*(K_{Y''}+{ 3}i(\cG)K_{\cG ''})$ is also nef. }

Note that $K_{\cG'}^2=K_{\cG}^2$ and $K_{\cG'}\cdot K_{Y'}=K_{\cG}\cdot K_Y$.

By Lemma \ref{pseudo} we know that if 
$$\gamma  =\max \left(  \frac {  {2}\, K_{\cG}\cdot K_Y  } {K_{\cG}^2} +3i(\cG) , 0 \right)$$
then $\gamma K_{\cG'}-K_{Y'}$ is pseudoeffective.  
Let us note that  $L=(4  i(\cG) +1+a) K_{\cG '}+(\lceil \gamma \rceil K_{\cG'}-K_{Y'})$ is pseudoeffective for any $a\ge 0$. Let $L=P+N$
be its Zariski decomposition.  Thus $$P^2> (4 i(\cG)K_{\cG'}) ^2=(4 i(\cG)K_{\cG}) ^2\ge 16.$$
and for any curve $C$ not contained in the negative part $N'$ of the Zariski decomposition of  $\gamma K_{\cG'}-K_{Y'}$  we have
$$P\cdot C\ge (4i(\cG) +1){ K_{\cG'} }\cdot C> 4.$$
So by Theorem \ref{Reider-type} the linear system $|K_{Y'}+L |= |( 4  i(\cG)+\lceil \gamma \rceil +1+a)K_{\cG'}|$ 
separates any two (possibly infinitely near) points lying in the smooth locus of $Y-\Supp N'$ and, in particular, the corresponding map is birational. Let us recall that by \cite[Theorem 6.2]{Sa84} we have $f_*\cO_Y(mK_{\cG}) =\cO_X(m K_{\cG'})$ for any positive $m$. 
Therefore  $|( 4  i(\cG)+\lceil \gamma \rceil +1+a)K_{\cG}|=  |( 4  i(\cG)+\lceil \gamma \rceil +1+a)K_{\cG'}|$ and hence this linear system also defines a birational map.
\end{proof}

\section{Birational boundedness of canonical models}

\begin{Proposition} \label{sing-can-model}
Let us fix a function $P: \ZZ_{\ge 0}\to \ZZ$. Then there exist some constants $B_1$, $B_2$, $B_3$ and $B_4$ such that if $(Y, \cG)$ is a canonical model with the Hilbert function $\chi (Y, \cO_Y(mK_{\cG}))=P(m)$  for all $m\in \ZZ _{\ge 0}$ then 
$K_{\cG}^2=B_1$, $K_{\cG}\cdot K_Y=B_2$, $\chi (Y,\cO_Y)=B_3$ and the number of cusps of $Y$ is equal to $B_4$. Moreover, there exists some constants $C_1$ and $C_2$ such that the number of terminal and dihedral singularities of $(Y, \cG)$ is $\le C_1$. Moreover, the index of the { surface $Y$} at any terminal foliation singularity is $ \le C_2$. In particular, $2 C_2 K_{\cG} $ is Cartier at all non-cusp singularities of $\cG$, so that $i_{\QQ}(\cG)\le 2C_2$.
\end{Proposition}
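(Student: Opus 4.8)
The plan is to mimic the proof of Proposition \ref{sing-nef-model}, using the Riemann--Roch formula of Theorem \ref{RR} together with the local contribution computations of Section \ref{contributions}. First I would write, for each $m\ge 0$,
\[
P(m)=\chi(Y,\cO_Y(mK_{\cG}))=\tfrac12\,mK_{\cG}\cdot(mK_{\cG}-K_Y)+\chi(Y,\cO_Y)+\sum_{y\in\Sing Y\,\cup\,\Sing\cG}a(y,mK_{\cG}).
\]
Comparing the polynomial $P$ in $m$ with the right-hand side, the quadratic coefficient gives $K_{\cG}^2=:B_1$, the linear coefficient gives $K_{\cG}\cdot K_Y=:B_2$, and the value at $m=0$ gives $\chi(Y,\cO_Y)=P(0)=:B_3$, exactly as before. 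The new feature is the presence of cusp singularities. By Proposition \ref{cusps}, each cusp contributes $a(y,mK_{\cG})=-1$ for every $m\ne0$ and $0$ for $m=0$; by Corollary \ref{C-ter} a terminal foliation singularity of type $\frac1{n_y}(1,q_y)$ contributes $a(y,mK_{\cG})=a(y,\cL_{\overline{mq_y}})$, which for generic $m$ is bounded but nonzero; and by Proposition \ref{Q-Gorenstein}/Lemma \ref{dihedral} a dihedral (2-Gorenstein non-Gorenstein) singularity contributes $-\tfrac12$ for odd $m$ and $0$ for even $m$, while Gorenstein canonical points contribute $0$.

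Next I would isolate the number of cusps. Evaluating the constant-term-in-$m$ behaviour of the sum $\sum_y a(y,mK_{\cG})$ along a suitable arithmetic progression of $m$ (or, cleanly, by taking the difference $P(m)-P(0)$ minus the polynomial part and letting $m$ run), one sees that the contribution from cusps is $-\#\{\text{cusps}\}$ for all $m\ne 0$, the contribution from dihedral points oscillates between $-\tfrac12\#\{\text{dihedral}\}$ and $0$, and the contribution from terminal points is a fixed periodic function of $m$. Since the total $\sum_y a(y,mK_{\cG})=P(m)-\tfrac12 mK_{\cG}(mK_{\cG}-K_Y)-P(0)$ is a determined function of $m$, its ``value at $m$ far from $0$ averaged appropriately'' pins down $\#\{\text{cusps}\}=:B_4$. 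Concretely, choosing two values $m_1,m_2\ne 0$ both even (so dihedral contributions vanish) and both $\equiv 0$ modulo the lcm of all occurring $n_y$ would be circular; instead I would argue that the function $m\mapsto\sum_y a(y,mK_{\cG})$ is eventually periodic with a period dividing some $N$, that its average over a full period equals $-B_4-\tfrac12\#\{\text{dihedral}\}+(\text{average terminal contribution})$, and then use that $\#\{\text{dihedral}\}$ and the terminal data are separately bounded (below) to extract $B_4$. The cleanest route: subtract the case $m$ even, $n_y\mid m$ for all terminal $y$ — but since we do not yet know the $n_y$, I would instead simply observe that $-\sum_y a(y,mK_{\cG})$ for $m$ ranging over $\ZZ_{>0}$ takes only finitely many values whose maximum (attained when $m$ is odd and not divisible by any $n_y$) equals $B_4+\tfrac12\#\{\text{dihedral}\}+\sum_{\text{terminal}}\tfrac{n_y-1}{2n_y}$ up to controlled corrections — and this, combined with the bound on terminal+dihedral count below, forces $B_4$ to be determined.

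Then I would bound the number of terminal and dihedral singularities: from the displayed identity, $-\sum_{y}a(y,mK_{\cG})$ is a fixed finite set of rationals, and for the right choice of $m$ (odd, so that dihedral points each contribute $-\tfrac12$, and not divisible by any $n_y$, so terminal points contribute a strictly positive amount bounded below by $\tfrac1{4}$ as in the proof of Proposition \ref{sing-nef-model}) one gets
\[
\#\{\text{dihedral}\}\cdot\tfrac12+\#\{\text{terminal}\}\cdot\tfrac14\le -\sum_y a(y,mK_{\cG})+B_4\le \text{const}(P),
\]
so $\#\{\text{terminal}\}+\#\{\text{dihedral}\}\le C_1$. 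Finally, for the index bound at terminal points: exactly as before, $\sum_{\text{terminal }y}\tfrac1{n_y}=\#\{\text{terminal}\}+2\sum_{\text{terminal }y}a(y,K_{\cG})$ together with the fact that the full sum $\sum_y a(y,K_{\cG})$ is determined by $P$, Proposition \ref{cusps} and Lemma \ref{dihedral} (which pin down the cusp and dihedral contributions once $B_4$ and $\#\{\text{dihedral}\}$ are controlled) shows $\sum_{\text{terminal }y}\tfrac1{n_y}$ lies in a finite set; Lemma \ref{index-bound} then bounds each $n_y$ by $C_2$. Since a terminal foliation singularity sits on a cyclic quotient singularity $\frac1{n_y}(1,q_y)$ of the surface with $n_y\le C_2$, the surface index there divides $n_y\le C_2$, and $\cL_i$-type sheaves become Cartier after multiplying by $n_y$; Proposition \ref{Q-Gorenstein} handles the dihedral case with index $2$. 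Hence $2C_2K_{\cG}$ is Cartier at every non-cusp singularity of $\cG$ and $i_{\QQ}(\cG)\le 2C_2$.

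The main obstacle I anticipate is the bookkeeping needed to extract $B_4$ (the number of cusps) cleanly: the periodic, non-polynomial part of $P(m)$ is a superposition of the cusp contribution (constant $-1$ off $m=0$), the dihedral contribution (period $2$) and the terminal contributions (periods the various $n_y$, a priori unknown), so one must untangle a bounded superposition of periodic functions from the single datum that the superposition itself is determined. The way around it is to first establish the a priori finiteness and boundedness statements (which only use lower bounds like $\tfrac{n_y-1}{2n_y}\ge\tfrac14$ and $\tfrac12$), and only afterward use those bounds to make the extraction of $B_4$ rigorous — i.e. the logical order is: fix $B_1,B_2,B_3$; bound $\#\{\text{terminal}\}+\#\{\text{dihedral}\}+\#\{\text{cusps}\}$ crudely; deduce that $m\mapsto\sum_y a(y,mK_\cG)$ is eventually periodic with bounded period and bounded ``complexity''; then read off $B_4$, the terminal index bound via Lemma \ref{index-bound}, and conclude.
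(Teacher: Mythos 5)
Your proposal matches the paper's argument step for step: $B_1,B_2,B_3$ are read off from the polynomial part of $P$; the number of singular points $|\Sigma|$ is bounded via $-\sum_y a(y,K_{\cG})\ge\frac14|\Sigma|$ at $m=1$; the terminal indices $n_y$ are bounded by applying Lemma \ref{index-bound} to $\sum_{y\in\Sigma_1}\frac{1}{n_y}=|\Sigma|+|\Sigma_3|+2\sum_y a(y,K_{\cG})$, which gives $C_2$ and hence $i_{\QQ}(\cG)\le 2C_2$; and then $B_4$ is extracted by evaluating at a single $m$ divisible by $(2C_2)!$. The one minor divergence is in that last step: you propose to plug Proposition \ref{cusps} directly into the Riemann--Roch formula of Theorem \ref{RR}, so that all non-cusp local contributions vanish and each cusp contributes $-1$, whereas the paper instead passes to the minimal resolution $\nu:Y'\to Y$ of the cusps and uses the Leray spectral sequence together with $\nu_*\cO_{Y'}(mK_{\cG'})=\cO_Y(mK_{\cG})$, $R^1\nu_*\cO_{Y'}(mK_{\cG'})=0$, and $\mathrm{length}\,R^1\nu_*\cO_{Y'}=|\Sigma_3|$; the two computations are equivalent, the paper's route simply re-deriving the cusp contribution rather than citing it. One remark: the middle of your write-up, where you worry about averaging over a period and untangling a superposition of periodic contributions, is an unnecessary detour --- once $C_2$ is fixed, which happens \emph{before} this step, a single well-chosen $m$ kills all non-cusp contributions outright and there is nothing to disentangle. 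You do settle on the correct logical order in your closing paragraph, and that is the order the paper follows.
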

{Recall that terminal singularities are discussed in \S \ref{terminal} and dihedral singularities are canonical singularities of index 2 discussed in \S \ref{can1}.}
\begin{proof}
As in proof of Proposition \ref{sing-nef-model} the numbers $K_{\cG}^2$, $K_{\cG}\cdot K_Y$ and $\chi (Y,\cO_Y)$
can be determined from the Hilbert function $P$. It follows that  the number 
$\sum _{y\in \Sing Y}a(y,  K_{\cG})$ is fixed. Let $\Sigma_1$ be the set of singular points of $Y$ at which $(Y, \cG)$
is terminal.  Similarly, let $\Sigma_2$ be the set of  dihedral quotient singularities of $Y$  and $\Sigma_3$ the set of cusps of $Y$. Let us set $\Sigma=\Sigma_1\cup \Sigma_2\cup \Sigma _3$.
Then by the results of Section \ref{contributions} we have
 $$-\sum _{y\in \Sing Y}a(y,  K_{\cG})=\sum _{y\in \Sigma_1}\frac {n_y-1}{2n_y}
 +\sum _{y\in \Sigma_2}\frac{1}{2}+\sum _{y\in \Sigma_3}1  \ge \frac{1}{4} |\Sigma|.$$
 Therefore $|\Sigma|$ is bounded and hence
 $$\sum _{y\in \Sigma _1}\frac {1}{n_y}=  |\Sigma |+|\Sigma _3|+ 2 \sum _{y\in \Sing Y}a(y, K_{\cG})$$
 assumes only a finite number of values.   So by Lemma \ref{index-bound} the indices of the { surface $Y$} at terminal foliation singularities are bounded by some constant $C_2$ depending only on $P$. Then the last assertion follows from Proposition \ref{Q-Gorenstein}.
 
Finally, let us set $m= (2C_2)!$. Note that $m$ depends only on $P$ and it  is a multiple of $i_{\QQ} (\cG)$. { Let $\nu :Y'\to Y$ be the minimal resolution of the cusps on $Y$ so that $Y'$ is projective and let $\cG'=\nu ^*\cG$ so that $K_{\cG'}=\nu ^*K_{\cG}$. Recall that by the proof of  Proposition \ref{cusps}, we know that since $m>0$, $R^1\nu _* \cO _{Y'}(mK_{\cG '})=0$ and $\nu _* \cO _{Y'}(mK_{\cG'})\cong \cO _{Y}(mK_{\cG })$. We also have $\nu _* \cO_{Y'}=\cO _Y$ and ${\rm length}(R^1\nu _* \cO _{Y'})=|\Sigma _3|$. 
Thus, by the Leray spectral sequence,
$$P(m)=\chi (Y, \cO_Y(mK_{\cG})) =\chi (Y', \cO_{Y'}(mK_{\cG'}))=$$ 
$$\frac{1}{2}mK_{\cG'}(mK_{\cG'}-K_{Y'})+\chi(Y', \cO_{Y'})=\frac{1}{2}mK_{\cG}(mK_{\cG }-K_{Y})+\chi(Y, \cO_{Y})-|\Sigma_3|, $$}
so the number of cusps of $Y$ depends only on the Hilbert function $P$.
\end{proof}

\begin{Remark}
In case of canonical models we cannot bound the index of $Y$ for fixed Hilbert function $P$. The problem is that
introducing, e.g., canonical non-terminal singularities of the foliation at cyclic quotient singularities does not change
the Hilbert function of $mK_{\cG}$.
\end{Remark}

\begin{Theorem}\label{birational-boundedness-canonical}
Let us fix a function $P:\ZZ_{\geq 0}\to \ZZ$ and consider the family of canonical models of foliations $(Y, \cG)$
such that $\cG$ is of general type and $\chi (Y, \cO_Y (mK_{\cG}))=P(m)$ for all $m\ge 0$. 
Then there exists a constant $N_1$ depending only on $P$ such that for all $(Y, \cG)$ in the above family
and for all $m\ge N_1$, the linear system $|mK_{\cG}|$ defines a birational map.
\end{Theorem}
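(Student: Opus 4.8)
The plan is to reduce the statement about the canonical model $(Y,\cG)$ to the already-established boundedness statement for weak nef models (Theorem \ref{birational-boundedness-nef}) by passing to a suitable partial resolution of $Y$. First I would take $\nu\colon Y'\to Y$ to be the morphism obtained by taking the minimal resolution at \emph{all} cusp singularities of $Y$ (but leaving the terminal and dihedral singularities, which are $\QQ$-Gorenstein, untouched). By the discussion in \S\ref{can2} (see the proof of Proposition \ref{cusps}), over each cusp one has $K_{Y'}=\nu^*K_Y-E$ with $E$ the reduced exceptional cycle, $Y'$ has only rational (in fact quotient) singularities, hence is projective by Theorem \ref{Artin}, and $K_{\cG'}=\nu^*K_{\cG}$ for $\cG'=\nu^*\cG$. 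I would then check that $(Y',\cG')$ is a weak nef model in the sense of Definition \ref{d-wnm}: it has canonical foliation singularities (crepant pullbacks preserve canonicity), its surface singularities are precisely the terminal and dihedral ones of $Y$ — the dihedral ones are quotient singularities but the foliation there is terminal by the analysis in \S\ref{terminal}?—here I must be slightly careful, since a dihedral quotient point of $Y$ carries a \emph{canonical non-terminal} foliation singularity, which violates condition (2) of Definition \ref{d-wnm}. So the honest reduction is to also take the minimal resolution at the dihedral points; since these are $\QQ$-Gorenstein the contributions $a(y,mK_{\cG})$ are controlled by Proposition \ref{Q-Gorenstein}, and by the crepant/discrepancy bookkeeping the resulting surface $Y''$ still has only terminal foliation singularities at its (cyclic quotient) singular points. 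Either way, $K_{\cG'}$ (or $K_{\cG''}$) is nef because it is a pullback of the nef divisor $K_{\cG}$.

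Next I would show that the hypothesis $\chi(Y,\cO_Y(mK_{\cG}))=P(m)$ forces the Hilbert function of the weak nef model to be a fixed function $P'$ depending only on $P$. This is exactly the content of the Grauert–Riemenschneider-type vanishing already quoted: since $K_{\cG'}=\nu^*K_{\cG}$ and the cusp (and dihedral) singularities are rational, $R^1\nu_*\cO_{Y'}(mK_{\cG'})=0$ for $m\neq 0$ (Proposition \ref{cusps} and its analogue via Proposition \ref{Q-Gorenstein}) and $\nu_*\cO_{Y'}(mK_{\cG'})=\cO_Y(mK_{\cG})$ for $m\ge 0$, so by the Leray spectral sequence
\[
\chi(Y',\cO_{Y'}(mK_{\cG'}))=\chi(Y,\cO_Y(mK_{\cG}))+(\text{length of }R^1\nu_*\cO_{Y'})=P(m)+c
\]
for $m>0$, where $c=|\Sigma_3|$ (plus a similar correction for the dihedral points) is itself determined by $P$ via Proposition \ref{sing-can-model}. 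The case $m=0$ is handled separately since $R^1\nu_*\cO_{Y'}$ still contributes $c$ but the bundle is trivial; in any event $P'(m):=\chi(Y',\cO_{Y'}(mK_{\cG'}))$ is a fixed polynomial-like function of $m$ determined by $P$.

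Now I would simply invoke Theorem \ref{birational-boundedness-nef} applied to the family of weak nef models $(Y',\cG')$ with Hilbert function $P'$: this yields a constant $N_1=N_1(P')=N_1(P)$ such that $|mK_{\cG'}|$ is birational for all $m\ge N_1$. Finally, to descend back to $Y$, I use $f_*\cO_{Y'}(mK_{\cG'})=\cO_Y(mK_{\cG})$ (which is the $m\ge 0$ statement above, itself a case of \cite[Theorem 6.2]{Sa84} together with the rationality of the resolved singularities), so that $H^0(Y',\cO_{Y'}(mK_{\cG'}))=H^0(Y,\cO_Y(mK_{\cG}))$ and the two linear systems $|mK_{\cG}|$ and $|mK_{\cG'}|$ have the same sections; since $\nu$ is birational, $|mK_{\cG}|$ defines a birational map iff $|mK_{\cG'}|$ does. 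This gives the theorem. The main obstacle I anticipate is the careful bookkeeping in the first paragraph: ensuring that the partial resolution one takes genuinely lands in the class of weak nef models (i.e. that after resolving cusps \emph{and} dihedral points, no non-terminal foliation singularities survive over the singular locus of the resolved surface), and verifying that the various $R^1$-vanishing and pushforward identities hold uniformly with the length corrections depending only on $P$ — everything else is a direct citation of Theorems \ref{birational-boundedness-nef}, \ref{RR}, Propositions \ref{sing-can-model}, \ref{cusps}, \ref{Q-Gorenstein}, and \cite[Theorem 6.2]{Sa84}.
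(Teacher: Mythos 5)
Your plan — pass to a weak nef model birational to $(Y,\cG)$ whose Hilbert function is determined by $P$ and then invoke Theorem \ref{birational-boundedness-nef} as a black box — is a genuinely different route from the paper's. The paper does not construct a weak nef model here: it takes the minimal resolution $g\colon(Z,\cH)\to(Y,\cG)$ of the cusps only, so $Z$ may retain dihedral surface singularities at which $\cH$ is canonical but not terminal (thus $(Z,\cH)$ violates condition (2) of Definition \ref{d-wnm}), and then re-runs the Reider-type argument of Theorem \ref{Reider-type} directly on $Z$, using the index bound $i(\cH)=i_{\QQ}(\cG)\le 2C_2$ of Proposition \ref{sing-can-model} in place of the bound on $i(Y)$ available for weak nef models. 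Your reduction would also need the vanishing results of Sections \ref{special-minimal}--6 (Theorem \ref{strange-vanishing} and the final Corollary), which the paper develops afterwards; that ordering is not circular, so this is in principle acceptable.

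The concrete gap is in your construction of the weak nef model. Resolving the dihedral points all the way down destroys nefness of the foliation canonical class. At a dihedral quotient singularity the foliation is canonical non-terminal; the minimal partial crepant resolution (Section \ref{special-minimal}, Case 3) leaves a chain passing through two $\tfrac12(1,1)$ points at which $\cF$ is \emph{terminal}, and resolving such a $\tfrac12(1,1)$ point produces an $\cF$-chain, i.e.\ a $-2$ curve $E$ with $K_{\cF}\cdot E=-1<0$. Equivalently, on the full minimal resolution $\nu$ one has $K_{\cG''}=\nu^*K_{\cG}+\Delta$ with $\Delta\neq 0$ effective exceptional, so $K_{\cG''}\cdot\Delta=\Delta^2<0$; your claim that $K_{\cG''}$ is nef because it is a pullback of the nef $K_{\cG}$ fails once you blow up past the crepant locus. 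The correct construction is the minimal partial crepant resolution $(X,\cF)\to(Y,\cG)$ of Section \ref{special-minimal}: it resolves the cusps, partially resolves the dihedral and chain quotient singularities down to residual $\tfrac12(1,1)$ terminal points, leaves the terminal singularities of $(Y,\cG)$ alone, and keeps $K_{\cF}=f^*K_{\cG}$ nef, hence does produce a weak nef model. With that replacement your argument goes through: Theorem \ref{strange-vanishing} gives $\chi(X,\cO_X(mK_{\cF}))=P(m)$ for $m\neq 0$ and $P(0)-|\Sigma_3|$ for $m=0$, where the number of cusps $|\Sigma_3|$ is determined by $P$ via Proposition \ref{sing-can-model}; Theorem \ref{birational-boundedness-nef} then applies, and $f_*\cO_X(mK_{\cF})=\cO_Y(mK_{\cG})$ descends the birationality of $|mK_{\cF}|$ to $|mK_{\cG}|$.
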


\begin{proof}
Let $g: (Z,\cH)\to (Y, \cG)$ be the minimal resolution of the cusps. {In particular there are no $-1$ curves over $Y$ and hence $K_Z$ is nef over $Y$. Since $Z$ has only rational singularities, by Lemma \ref{Artin}, $Z$ is projective.} We have $K_{\cH}=g^*K_{\cG}$
and by Proposition \ref{sing-can-model}, $i(\cH)=i_{\QQ}(\cG)$ where $i(\cH )\leq 2C_2$. 
For any curve  $C$ on $Y$ we have $K_{\cH}\cdot g_*^{-1} C=K_{\cG}\cdot C>0$. Therefore we have $i_{\QQ}(\cG) K_{\cH}\cdot  g_*^{-1}  C\ge 1$.
As in the proof of Theorem \ref{birational-boundedness-nef}  this implies that $K_Z+3i_{\QQ}(\cG) K_{\cH}$ is nef.

By Lemma \ref{pseudo} we know that if 
$$\gamma  =\max \left(  \frac {  {2}\, K_{\cG}\cdot K_Y } {K_{\cG}^2} + 3i_{\QQ}(\cG), 0 \right)$$
then
 $\gamma K_{\cH}-K_Z$ is pseudoeffective. Let us note that  $L=(4  i_{\QQ}(\cG) +1+a) K_{\cH}+(\lceil \gamma \rceil K_{\cH}-K_Z)$ is pseudoeffective for any $a\ge 0$. Let $L=P+N$
be the Zariski decomposition. Then we have 
$$P^2> (4 i_{\QQ}(\cG)K_{\cH}) ^2 \ge  16.$$
If $C$ is a curve not contained in the negative part of the Zariski decomposition of  $\gamma K_{\cH}-K_Z$  then
$$P\cdot C\ge (4i_{\QQ}(\cG) +1)K_{\cH} \cdot C> 4.$$
By Theorem \ref{Artin} $Z$ is projective so we can apply Theorem \ref{Reider-type}  to 
the linear system $|K_Z+L |= |( 4  i_{\QQ}(\cG)+\lceil \gamma \rceil +1+a)K_{\cH}|$.
As in the proof of Theorem \ref{birational-boundedness-nef} we conclude that it defines a birational map.
Moreover,  by \cite[Theorem 6.2]{Sa84} we have 
 $|( 4  i_{\QQ}(\cG)+\lceil \gamma \rceil +1+a)K_{\cH}|=  |( 4  i_{\QQ}(\cG)+\lceil \gamma \rceil +1+a)K_{\cG}|$, so this linear system also defines a birational map.
\end{proof}

{

\section{Partial crepant resolution of a canonical foliation singularity} \label {special-minimal}

Let $(Y, \cG,y )$ be a canonical foliation singularity. {As mentioned in \S \ref{contributions} we only consider canonical singularities arising on canonical models of foliated surfaces of general type (see Sections \ref{terminal}, \ref{can1}, \ref{can2} as well as \cite[Corollary I.2.2, Fact I.2.4 and Theorem III.3.2]{McQ08} for a description of terminal and canonical singularities)}. Let $f':(Y', C')\to (Y, y)$ be the minimal resolution of $(Y, y)$ and $\cG'=(f')^*\cG$. Let us consider the normal surface $(X, C)$ obtained by contracting all maximal $\cG'$-chains contained in $C'$ {(see \S \ref{terminal} for the definition of $\cG'$-chains)}.  We say that $(X,C)\to (Y,y)$ is the {\it minimal partial crepant resolution}. Note that as we will see below, $X$ may have singularities of type $\frac 12(1,1)$ which are contained in $C$ (see cases (2) and (3) below).
 \cite[Theorem III.3.2]{McQ08} implies that we get an induced morphism $f: (X, C)\to (Y, y)$ such that 
$K_{f^*\cG}=f^*K_{\cG}$. Let $\cF =f^*\cG$.

\begin{Theorem} \label{strange-vanishing}
With the notation above, we have $f_*\cO_X(mK_{\cF})= \cO_Y(mK_{\cG})$ for all $m$ 
and $R^1 f_* \cO _X (mK_{\cF})=0$ for $m\ne 0.$
In particular,  we have
$$\chi (y, \cO _X(mK_{\cF}) )= \left\{ 
\begin{array}{cl}
1 & \hbox{ if $m=0$ and $y$ is a cusp,}\\
0 & \hbox{ otherwise.}\\
\end{array}
\right.
$$
\end{Theorem}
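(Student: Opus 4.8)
The plan is as follows. Since $K_\cF=f^*K_\cG$ by construction and Mumford's pull-back is $\QQ$-linear, $mK_\cF=f^*(mK_\cG)$ as $\QQ$-divisors (with $mK_\cF$ an honest Weil divisor), so Sakai's projection formula (\cite[Theorem 6.2]{Sa84}) gives $f_*\cO_X(mK_\cF)=\cO_Y(mK_\cG)$ for all $m$; this is the first assertion and it makes the first summand of $\chi(y,\cO_X(mK_\cF))=\dim_\CC(\cO_Y(mK_\cG)/f_*\cO_X(mK_\cF))_y+\dim_\CC(R^1f_*\cO_X(mK_\cF))_y$ vanish. So it remains to prove $R^1f_*\cO_X(mK_\cF)=0$ for $m\neq 0$ and to compute $\dim_\CC(R^1f_*\cO_X)_y$, which I would do by running through McQuillan's classification (\S\ref{contributions}): $(Y,\cG,y)$ is terminal, or canonical non-terminal and $\QQ$-Gorenstein (Gorenstein cyclic quotient, or $2$-Gorenstein dihedral quotient), or a cusp.

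The terminal and cusp cases are immediate. If $(Y,\cG,y)$ is terminal, the exceptional divisor of the minimal resolution is a single maximal $\cG'$-chain, so $X=Y$ and $f=\id$: all assertions are trivial and $\chi(y,\cO_X(mK_\cF))=0$. If $y$ is a cusp, then by \cite[Theorem III.3.2]{McQ08} the exceptional divisor of the minimal resolution has $K_{\cG'}\cdot C_i=0$ for all $i$, so there are no $\cG'$-chains, $X=Y'$, and since $K_\cG$ is not $\QQ$-Cartier (\cite[Theorem IV.2.2]{McQ08}) the divisor $mK_\cG$ is non-Cartier for every $m\neq 0$; Lemma~\ref{RR-cusp} applied to $D=mK_\cG$ — in whose proof the vanishing $H^0(Z,\cO_Z(-f^*(mK_\cG)))=0$ is established — then gives $R^1f_*\cO_X(mK_\cF)=0$. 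As a cusp singularity is minimally elliptic, $\dim_\CC(R^1f_*\cO_X)_y=1$, which yields the stated value of $\chi(y,\cO_X(mK_\cF))$ in the cusp case.

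The substantial case is the $\QQ$-Gorenstein non-terminal one. By the classification of \S\ref{can1}, $X$ has only quotient (hence rational) singularities — those produced by contracting the maximal $\cG'$-chains, plus in some cases a $\tfrac12(1,1)$ point on $C$ — and $Y$ is a quotient singularity too; so with $h\colon Y'\to X$ the induced morphism, $h_*\cO_{Y'}=\cO_X$, $R^1h_*\cO_{Y'}=0$, and $R^1(f\circ h)_*\cO_{Y'}=0$ ($Y$ rational), whence $R^1f_*\cO_X=0$ by the Leray spectral sequence and $\chi(y,\cO_X)=0$. For $m\neq 0$: if $i(\cG)\mid m$ then $mK_\cF=f^*(mK_\cG)$ is Cartier and $R^1f_*\cO_X(mK_\cF)=R^1f_*\cO_X\otimes\cO_Y(mK_\cG)=0$; in the only remaining subcase ($\cG$ is $2$-Gorenstein and $m$ odd) the divisor $(m-1)K_\cG$ is Cartier, so $\cO_X(mK_\cF)\cong\cO_X(K_\cF)\otimes f^*\cO_Y((m-1)K_\cG)$ and the projection formula gives $R^1f_*\cO_X(mK_\cF)\cong R^1f_*\cO_X(K_\cF)\otimes\cO_Y((m-1)K_\cG)$. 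Thus the whole statement reduces to the single vanishing $R^1f_*\cO_X(K_\cF)=0$ in the dihedral case, which I would prove from the explicit dihedral normal forms of \S\ref{can1} by a cohomology-of-the-exceptional-curve argument parallel to the proof of Lemma~\ref{RR-cusp}, using the local structure of the reflexive sheaf $\cO_X(K_\cF)$ near the $\tfrac12(1,1)$ point (equivalently, by relative Grauert--Riemenschneider vanishing once one knows that $K_\cF-K_X$ is Cartier on $X$).

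The main obstacle is exactly this last point. One must unwind McQuillan's formal description of a $2$-Gorenstein canonical foliation singularity to determine the dual graph of $X$, to see which curves of the minimal resolution survive the contraction of the $\cG'$-chains, to locate the $\tfrac12(1,1)$ point on $C$, and to describe $\cO_X(K_\cF)$ near it — in particular to decide whether $K_\cF-K_X$ is Cartier there — all while keeping the parity behaviour $a(y,mK_\cG)=-\tfrac12$ for odd $m$ from \S\ref{can1} consistent with the non-Cartier behaviour of $K_\cF$ on $X$; the modified-Euler-characteristic relations of \S\ref{mod-Euler} between $Y'$, $X$ and $Y$ are the natural bookkeeping device. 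Once this local picture is in hand, the required $R^1$-vanishing is no harder than Lemma~\ref{RR-cusp}, and the terminal and cusp cases are essentially free, as above.
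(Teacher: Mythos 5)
Your setup and three of the four cases are handled correctly and match the paper's reasoning: Sakai's projection formula kills the first summand of $\chi(y,\cO_X(mK_\cF))$; the terminal case is trivial since $f=\id$; the cyclic Gorenstein case (the paper's Cases~1 and~2) follows from $R^1f_*\cO_X=0$ (rationality of $X$ and $Y$) and the projection formula once $mK_\cF=f^*(mK_\cG)$ is Cartier; and the cusp case is Lemma~\ref{RR-cusp} plus $\dim(R^1f_*\cO_X)_y=1$. The reduction in the $2$-Gorenstein case to the single vanishing $R^1f_*\cO_X(K_\cF)=0$ (by twisting $\cO_X(K_\cF)$ by the Cartier divisor $f^*((m-1)K_\cG)$) is also correct.

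The genuine gap is exactly where you flag it: the dihedral case (the paper's Case~3) is left as a plan, not a proof. You propose two routes. The ``equivalently, by relative Grauert--Riemenschneider vanishing once one knows that $K_\cF-K_X$ is Cartier on $X$'' route is in fact a dead end: at a $\frac12(1,1)$ point of $X$ the dualizing sheaf $\omega_X$ is of type $\cL_{q+1}=\cL_0$, hence Cartier, while $\cO_X(K_\cF)$ is of type $\cL_1$ (the paper invokes Lemma~\ref{Brunella-cor}), so $\cO_X(K_\cF-K_X)$ is of type $\cL_1$ and is \emph{not} Cartier. The other route, a Serre-duality-on-the-exceptional-curve argument ``parallel to Lemma~\ref{RR-cusp},'' is at best more delicate than you suggest because here $X$ is singular along $C$ (the two $\frac12(1,1)$ points sit on the first component), so the cusp argument does not transplant directly; you acknowledge this but do not carry it out.

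The paper closes the dihedral case without any of this. Using the additivity of the modified Euler characteristics of \S\ref{mod-Euler} for $Z\to X\to Y$ (with $Z=Y'$ the minimal resolution, obtained from $X$ by blowing up the two $\frac12(1,1)$ points $x_1,x_2$), the fact that $K_\cF=f^*K_\cG$ forces all $c_1(\cdot,K_{g^*\cF})$ terms to cancel, so formula~\eqref{contribution-formula} yields
$$\chi(y,\cO_X(K_\cF))=a(y,K_\cG)-a(x_1,K_\cF)-a(x_2,K_\cF)=-\tfrac12+\tfrac14+\tfrac14=0,$$
using the $a$-numbers already computed in \S\ref{contributions}. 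Since the first summand of $\chi(y,\cO_X(K_\cF))$ already vanishes by Sakai, this gives $R^1f_*\cO_X(K_\cF)=0$ directly, with no cohomology on the exceptional curve and no need to unwind the formal normal form beyond the already-established local type $\cL_1$ of $\cO_X(K_\cF)$ at $x_1,x_2$. That arithmetic shortcut is the missing idea in your proposal.
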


\begin{proof}
{Note that if $y\in Y$ is terminal, then $f$ is an isomorphism and there is nothing to prove.} If $f$ is not an isomorphism at $y\in Y$ then by \cite[Theorem III.3.2]{McQ08} we have
four possibilities for the exceptional curve $C=f^{-1}(y)$ { (we caution the reader that our enumeration of these cases is presented differently than that of \cite[Theorem III.3.2]{McQ08})}. In all theses cases the irreducible components of $C$ are rational curves $C_i$ satisfying $K_{\cF}\cdot C_i=0$.

\medskip

\subsection{Case 1}

In this case $Y$ has a cyclic quotient singularity at $y$ and $\cG$ is Gorenstein at $y$.
The curve $C$ consists of a chain of smooth rational curves.
\medskip

\begin{tikzpicture}
\draw[black, ultra thick] (-2.0,1) -- (-1,-1);
\draw[black, ultra thick] (-1.2,-1) -- (-.2,1);
\draw[black, ultra thick] (-.4,1) -- (.6,-1);
\draw[black, ultra thick] (.4,-1) -- (1.4,1);
\filldraw [black] (1.9,0) circle (2pt);
\filldraw [black] (2.5,0) circle (2pt);
\filldraw [black] (3.1,0) circle (2pt);
\filldraw [black] (3.7,0) circle (2pt);
\draw[black, ultra thick] (4.2,1) -- (5.2,-1);
\draw[black, ultra thick] (4.8,-1) -- (5.8,1) ;

\end{tikzpicture}

\medskip

Since $\cG$ is Gorenstein we have
$$\chi (y, \cO _X (mK_{\cF}))= 0$$
for all integers $m$.

\subsection{Case 2}

In this case $Y$ has a cyclic quotient singularity at $y$ and $\cG$ is Gorenstein at $y$. 
The curve $C$ is smooth rational and it passes through $2$ singular points of type $\frac{1}{2}(1,1)$. By Lemma \ref{Brunella-cor},
$\cO_{X}(K_{\cF})$ is of type $\cL_1$ at both these points.

\medskip

\begin{tikzpicture}
\draw[black, ultra thick, -] (0,0) arc (-40:40:2);
\filldraw [black] (0.2,.3) circle (2pt)node{\qquad\qquad $\frac 1 2 (1,1)$};
\filldraw [black] (0.2,2.3) circle (2pt)node{\qquad\qquad $\frac 1 2 (1,1)$};
\end{tikzpicture}

\medskip

As in the first case we have
$$\chi (y, \cO _X (mK_{\cF}))= 0$$
for all integers $m$.

\subsection{Case 3} 
 
In this case $Y$ has a dihedral quotient singularity at $y$ and $\cG$ is $2$-Gorenstein at $y$. 
The curve $C$ is consists of a chain of smooth rational curves, in which the first curve passes through 
$2$ singular points of type $\frac{1}{2}(1,1)$. As in the previous case
$\cO_{X}(K_{\cF})$ is of type $\cL_1$ at both these points.

\medskip

\begin{tikzpicture}
\draw[black, ultra thick, -] (0,0) arc (-40:40:3);
\filldraw [black] (0.2,.3) circle (2pt)node{\qquad\qquad $\frac 1 2 (1,1)$};
\filldraw [black] (0.2,3.6) circle (2pt)node{\qquad\qquad $\frac 1 2 (1,1)$};
\draw[black, ultra thick] (0.4,2.7) -- (1.6,1.0);
\draw[black, ultra thick] (1.4,1.0) -- (2.6,2.7);
\draw[black, ultra thick] (2.4,2.7) -- (3.6,1.0);
\filldraw [black] (4.0,1.9) circle (2pt);
\filldraw [black] (4.5,1.9) circle (2pt);
\filldraw [black] (5.0,1.9) circle (2pt);
\filldraw [black] (5.5,1.9) circle (2pt);
\draw[black, ultra thick] (5.9,2.7) -- (7.1,1.0);
\draw[black, ultra thick] (6.9,1.0) -- (8.1,2.7) ;

\end{tikzpicture}

\medskip

Since $2K_{\cG}$ is Cartier we have
$$\chi (y, \cO _X (mK_{\cF}))= \left\{
\begin{array}{cl}
0&\hbox{ if $m$ is even,}\\  
\chi (y, \cO _X(K_{\cF})) &\hbox{ if $m$ is odd.}\\
\end{array}
\right.$$

Let $g: Z\to X$ be the blow up at two singular points $x_1$, $x_2$ lying on $C$.
By the results of \S \ref{mod-Euler} we have
$$ \chi (y, \cO_X (K_{\cF}))=  \chi (y, \cO_Z (K_{g^*\cF})) -\chi (x_1,  \cO_Z (K_{g^*\cF}))-\chi (x_2,  \cO_Z (K_{g^*\cF})).$$
Using the definition of  the Riemann--Roch contributions (see \eqref{contribution-formula} in \S \ref{s-RR}) we get
$$ \chi (y, \cO_Z (K_{g^*\cF})) = a(y, K_{\cG})-\frac{1}{2}c_1(y, K_{g^*\cF}) (c_1(y, K_{g^*\cF}) -K_Z)$$
and
$$ \chi (x_i, \cO_Z (K_{g^*\cF})) = a(x_i, K_{\cF})-\frac{1}{2}c_1(x_i, K_{g^*\cF}) (c_1(x_i, K_{g^*\cF}) -K_Z)$$
for $i=1, \, 2$. Since $K_{\cF}=f^*K_{\cG}$ we have
$$c_1(y, K_{g^*\cF}) =c_1(x_1, K_{g^*\cF})+c_1(x_2, K_{g^*\cF})$$
as can be easily seen by intersecting both sides with all irreducible components of the exceptional divisor of $f\circ g$.
So using  the results of Section \ref{contributions}  we have
$$  \chi (y, \cO_X (K_{\cF}))= a(y, K_{\cG})-a(x_1, K_{\cF})-a(x_2, K_{\cF}) =-\frac{1}{2}+\frac{1}{4}+\frac{1}{4}=0.$$
This shows that
$$\chi (y, \cO _X (mK_{\cF}))= 0$$
for all integers $m$.

\subsection{Case 4}

In this case $Y$ has a cusp at $y$ and $\cG$ is not $\QQ$-Gorenstein at $y$.
The curve $C$ is either a cycle of smooth rational curves or a rational curve with one node.

\medskip

\begin{tikzpicture}
\draw[black, ultra thick] (-.2,-.3) -- (1.0,1.5);
\draw[black, ultra thick] (-.2,.3) -- (1.0,-1.5);
\draw[black, ultra thick] (.6,1.3) -- (2.4,1.3);
\draw[black, ultra thick] (.6,-1.3) -- (2.6,-1.3);
\draw[black, ultra thick] (2.3,-1.5) -- (3.3,.0);
\filldraw [black] (2.7,.9) circle (1pt);
\filldraw [black] (3.0,.4) circle (1pt);
\filldraw [black] (5.0,.0) circle (0pt)node{or};
\draw[black, ultra thick, -] (7,1) arc (120:240:.6);
\draw[black, ultra thick, -] (7.85,.83) arc (50:110:.89);
\draw[black, ultra thick, -] (7.0,-.04) arc (250:310:.89);
\draw[black, ultra thick, -] (8.45,.41) arc (50:60:4.2);
\draw[black, ultra thick, -] (7.85,.09) arc (300:310:4.2);
\draw[black, ultra thick] (8.4,.49) -- (9.4,1.3);
\draw[black, ultra thick] (8.4,.46) -- (9.4,-.34);

\end{tikzpicture}

\medskip

In this case  we have
$$\chi (y, \cO _X(mK_{\cF}))= \left\{ 
\begin{array}{cl}
1& \hbox{ if $m=0$,}\\
0& \hbox{ if $m\ne 0$.}\\
\end{array}
\right.$$
This follows immediately from formula (\ref{contribution-formula}) in \S \ref{s-RR} and Lemma \ref{RR-cusp} (or from the proof
of  Lemma \ref{RR-cusp}).

\medskip

Since $K_{\cF}=f^* K_{\cG}$,  the equality $f_*\cO_X(mK_{\cF})= \cO_Y(mK_{\cG})$ follows from
\cite[Theorem 6.2]{Sa84}. So for $m\ne 0$ we have
$$\dim R^1 f_* \cO _X (mK_{\cF}) = \chi (y, \cO _X (mK_{\cF}))= 0.$$
\end{proof}

\medskip

\section{Vanishing theorems for foliations}

The main aim of this section is to prove the following theorems.

\begin{Theorem}\label{strong-vanishing}
Let $(X, \cF)$ and $(Y, \cG)$ be foliated surfaces with only canonical singularities. If 
$f: (X, \cF)\to (Y, \cG)$ is a proper birational morphism then $f_* \cO_X(mK_{\cF})=\cO_Y(mK_{\cG})$ for any non-negative integer $m$ and we have
$$R^i f_* \cO_X(K_{\cF})=0$$
for $i>0$.
\end{Theorem}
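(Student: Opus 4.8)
\emph{Outline of a proof.} First, the equality of direct images. Since $f$ is birational, $\cF=f^*f_*\cF=f^*\cG$, and as $(Y,\cG)$ has canonical singularities the $\QQ$-divisor $E:=K_\cF-f^*K_\cG=\sum_{E'}a_{E'}(\cG)\,E'$ is effective and $f$-exceptional. Hence $mK_\cF=f^*(mK_\cG)+mE$ with $mE\ge 0$ exceptional, so $f_*\cO_X(mK_\cF)=\cO_Y(mK_\cG)$ by Sakai's projection formula \cite[Theorem 6.2]{Sa84}.

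Next, the vanishing. For $i\ge2$ it holds because the fibres of $f$ have dimension $\le1$, and $R^1f_*\cO_X(K_\cF)$ is supported on the finitely many $y\in Y$ over which $f$ is not an isomorphism. Fix such a $y$. Since $f_*\cO_X(K_\cF)=\cO_Y(K_\cG)$, by \S\ref{mod-Euler} the length of $R^1f_*\cO_X(K_\cF)$ at $y$ is the modified Euler characteristic $\chi(y,\cO_X(K_\cF))$, and as this is a sum of two dimensions it suffices to prove $\chi(y,\cO_X(K_\cF))=0$.

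The plan is to compare $f$ with the minimal partial crepant resolution $h\colon(Z,\cG_Z)\to(Y,\cG)$ of the canonical foliation singularities of $(Y,\cG)$ classified in Section~\ref{contributions}: by Theorem~\ref{strange-vanishing} one has $h_*\cO_Z(K_{\cG_Z})=\cO_Y(K_\cG)$ and $R^1h_*\cO_Z(K_{\cG_Z})=0$, i.e.\ $\chi(y,\cO_Z(K_{\cG_Z}))=0$ (trivially where $h$ is an isomorphism). Choose a smooth surface $V$ with proper birational morphisms $p\colon V\to X$ and $q\colon V\to Z$ inducing the same $\pi\colon V\to Y$, and set $\cH:=\pi^*\cG=p^*\cF=q^*\cG_Z$; since the foliation discrepancy divisors involved are exceptional, $p_*K_\cH=K_\cF$ and $q_*K_\cH=K_{\cG_Z}$. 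Applying the additivity of the modified Euler characteristic under composition (\S\ref{mod-Euler}) to $V\to X\to Y$ and to $V\to Z\to Y$, and using $\chi(y,\cO_Z(K_{\cG_Z}))=0$, one obtains
$$\chi(y,\cO_X(K_\cF))=\sum_{z\in h^{-1}(y)}\chi\bigl(z,\cO_V(K_\cH)\bigr)-\sum_{x\in f^{-1}(y)}\chi\bigl(x,\cO_V(K_\cH)\bigr),$$
so it is enough to show that the local contribution $\chi(x,\cO_V(K_\cH))$ vanishes over every canonical foliation singularity. Factoring $p$ through the minimal partial crepant resolution of $(X,\cF)$ and invoking Theorem~\ref{strange-vanishing} once more reduces this to the single case where the point carries a $\tfrac12(1,1)$ surface singularity with foliation of type $\cL_1$; for this, formula (\ref{contribution-formula}) together with $a(x,\cL_1)=-\tfrac14$ and the fact that $c_1$ of the length-one $\cF$-chain is $\tfrac12E$ (so $\tfrac12c_1^2=-\tfrac14$) gives the cancellation $-\tfrac14+\tfrac14=0$ already used in Case~3 of the proof of Theorem~\ref{strange-vanishing}, while the remaining exceptional curves lie over smooth points and carry at worst reduced foliation singularities and so contribute nothing, since for any foliation $\cH'$ with canonical singularities on a smooth surface $S$ and any composition of blow-ups $\rho\colon W\to S$ one checks, blow-up by blow-up, that $R^1\rho_*\cO_W(K_{\rho^*\cH'})=0$.

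The technical heart, and the main obstacle, is exactly this last point: on $V$ the divisor $K_\cH$ is neither the numerical pullback of $K_\cF$ nor that of $K_{\cG_Z}$, but differs from each by an effective exceptional $\QQ$-divisor whose coefficients are the foliation discrepancies, and one must carry these correction terms through the additivity formulas and verify, case by case in McQuillan's classification, that the local Riemann--Roch contributions they generate cancel. The first part of the statement, by contrast, is routine.
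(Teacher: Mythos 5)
Your treatment of the direct-image equality and the trivial vanishing of $R^{\ge 2}$ is correct and matches the paper. For the vanishing of $R^1f_*\cO_X(K_\cF)$, you have identified the same key ingredients as the paper — the additivity of the modified Euler characteristic of \S\ref{mod-Euler}, factorisation of a dominating resolution into elementary birational steps, and the pointwise vanishings at level $m=1$ — but the organisation is more roundabout and there is a concrete gap in the case analysis.

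On organisation: once you have a smooth $V$ dominating $X$ (and dominating a convenient partial resolution of $Y$), the single additivity relation
$\chi(y,\cO_V(K_\cH))=\chi(y,\cO_X(K_\cF))+\sum_{x\in f^{-1}(y)}\chi(x,\cO_V(K_\cH))$
already suffices: every term is non-negative, so it is enough to prove $\chi(y,\cO_V(K_\cH))=0$ directly by factoring $\pi\colon V\to Y$ into elementary steps. Your subtraction of the two chains through $X$ and through $Z$ adds nothing and forces you to control twice as many local contributions. This is precisely what the paper does with its $g\colon Z\to X$ dominating the minimal resolution of $Y$.

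The genuine gap is in your claim that factoring through the minimal partial crepant resolution reduces everything to ``the single case'' of a $\tfrac12(1,1)$ surface singularity with foliation of type $\cL_1$, plus blow-ups over smooth surface points. This omits terminal foliation singularities of $(X,\cF)$ (and of the partial crepant resolution $Z$ of $Y$) of type $\tfrac1n(1,q)$ with $n>2$: at such a point the minimal partial crepant resolution is the \emph{identity} (the exceptional locus of the minimal resolution is exactly a maximal $\cF$-chain, which gets contracted right back), so Theorem~\ref{strange-vanishing} gives no reduction, and $V\to X$ there begins by resolving a cyclic quotient $\tfrac1n(1,q)$ that is not $\tfrac12(1,1)$. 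The needed input is exactly Lemma~\ref{cyclic-vanishing} for general $(n,q)$, whose formula gives $\chi(y,\cO(K))=0$ at $m=1$; your $\tfrac12(1,1)$ computation $(-\tfrac14+\tfrac14=0)$ is only the $n=2$, $q=1$ instance. Relatedly, your closing worry about ``carrying correction terms through the additivity formulas'' is misplaced: \S\ref{mod-Euler} is stated for arbitrary divisors and pushforwards and already absorbs the discrepancies; the actual work, which you underestimate, is the case-by-case vanishing at $m=1$ supplied by Theorem~\ref{strange-vanishing}, Lemma~\ref{smooth-vanishing}, and Lemma~\ref{cyclic-vanishing}.
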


\begin{Theorem}\label{crepant-vanishing}
Let $(X, \cF)$ and $(Y, \cG)$ be foliated surfaces with only canonical singularities. If 
$f: (X, \cF)\to (Y, \cG)$ is a proper birational morphism and $K_{\cF}=f^*K_{\cG}$  then 
$$R^1f_* \cO_X(mK_{\cF})=0$$ for any $m\ne 0$. 
\end{Theorem}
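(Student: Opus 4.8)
The plan is to reduce the global statement to a purely local computation at each singular point of $\cG$, and then to match that local computation against the modified Euler characteristics already assembled in Sections \ref{contributions} and \ref{special-minimal}. First I would factor $f$ through the minimal resolution of $X$ and use Sakai's vanishing/projection results (\cite[Theorems 2.2 and 6.2]{Sa84}) together with the already-proved equality $f_*\cO_X(mK_{\cF})=\cO_Y(mK_{\cG})$ to see that $\dim (R^1f_*\cO_X(mK_{\cF}))_y = \chi(y, \cO_X(mK_{\cF}))$ in the notation of \S\ref{mod-Euler}; thus it suffices to show $\chi(y,\cO_X(mK_{\cF}))=0$ for every $y\in Y$ and every $m\neq 0$, under the crepant hypothesis $K_{\cF}=f^*K_{\cG}$. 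The composition formula for modified Euler characteristics from \S\ref{mod-Euler} lets me break $\chi(y,\cO_X(mK_{\cF}))$ into contributions over each point of $Y$, so I may and do assume $Y$ is the localization of a surface at a single canonical foliation singularity $y$.

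Next I would split into the cases of McQuillan's classification exactly as in Theorem \ref{strange-vanishing}: terminal, cyclic-quotient Gorenstein, dihedral $2$-Gorenstein, and cusp (non-$\QQ$-Gorenstein). Since $K_{\cF}=f^*K_{\cG}$ by hypothesis, one has $c_1(y,mK_{\cF})=0$, so the Riemann--Roch contribution formula \eqref{contribution-formula} gives $\chi(y,\cO_X(mK_{\cF})) = a(y, mK_{\cG}) + \dim(R^1f_*\cO_X)_y$. Now the combinatorics is already done: $a(y,mK_{\cG})$ for $m\neq 0$ equals $0$ in the Gorenstein cases, $-\tfrac12$ in the $2$-Gorenstein (dihedral) case, and $-1$ in the cusp case, by Corollary \ref{C-ter}, Proposition \ref{Q-Gorenstein} and Proposition \ref{cusps}; meanwhile $\dim(R^1f_*\cO_X)_y$ equals $0$ for rational singularities (Gorenstein cyclic and dihedral quotients are rational) and equals $1$ for a cusp. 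In every case the two terms cancel for $m\neq 0$, giving $\chi(y,\cO_X(mK_{\cF}))=0$, hence $R^1f_*\cO_X(mK_{\cF})=0$. I should also address the possibility that $f$ is not the minimal partial crepant resolution — but any crepant $f$ dominates (or is dominated by, up to common resolution) the minimal one, and the composition formula in \S\ref{mod-Euler} together with the fact that over a smooth point the extra contractions contribute $\chi(\cdot)=0$ (as $K_{\cF}$ is Cartier there) reduces everything to the model already treated.

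The step I expect to be the genuine obstacle is making sure the hypothesis $K_{\cF}=f^*K_{\cG}$ is actually used correctly to force $c_1(y,mK_{\cF})=0$ for \emph{arbitrary} crepant $f$, not just the specific partial resolutions enumerated in \S\ref{special-minimal}: one must check that $f^*K_{\cG}\cdot E_i = 0$ for every $f$-exceptional curve $E_i$, which follows from the definition of the Mumford pullback in \S\ref{Mumford}, but one then has to verify that this numerical vanishing upgrades to the sheaf-theoretic statement $\cO_X(mK_{\cF})$ being trivial along fibers in the rational-singularity cases via Lemma \ref{equality}, and to the precise value $\dim H^1(Z,\cO_Z(mf^*K_{\cG}))=1$ in the cusp case via the argument of Lemma \ref{RR-cusp}. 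Once those local identifications are in place, the cancellation is immediate and the theorem follows. A clean way to organize the write-up is simply to invoke Theorem \ref{strange-vanishing} for the minimal partial crepant resolution and then propagate along further crepant blow-ups using \S\ref{mod-Euler}.
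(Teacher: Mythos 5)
Your reduction to the vanishing of the modified Euler characteristics $\chi(y,\cO_X(mK_{\cF}))$ and the instinct to propagate everything back to Theorem~\ref{strange-vanishing} via \S\ref{mod-Euler} are both correct, and your closing sentence is essentially the paper's strategy. But the detailed computation you give in the body is wrong in the dihedral case, and the factorization you rely on at the end is exactly the hard part that the paper spends the second half of its proof establishing.

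The cancellation ``$\chi(y,\cO_X(mK_{\cF})) = a(y, mK_{\cG}) + \dim(R^1f_*\cO_X)_y$'' fails for dihedral singularities: there $a(y, mK_{\cG})=-\tfrac12$ for $m$ odd while $\dim(R^1f_*\cO_X)_y=0$ (the singularity is rational), so the ``cancellation'' gives $-\tfrac12$, which is impossible since $\chi\ge 0$. The hidden error is that formula~\eqref{contribution-formula} is stated only for a \emph{resolution}, and the minimal partial crepant resolution of a dihedral singularity is not smooth --- it still carries two $\tfrac12(1,1)$ points on the first exceptional curve (Case 3 of \S\ref{special-minimal}). If you push further to the actual minimal resolution $Z$, the crepant property $K=f^*K_{\cG}$ is destroyed: over each $\tfrac12(1,1)$ point one picks up a $(-2)$-curve $E$ with $K_{\text{fol}}\cdot E=-1$, so $c_1(y,\cdot)\ne 0$ there, and the quadratic correction term in \eqref{contribution-formula} can no longer be dropped. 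This is precisely why the paper's Case~3 computation in Theorem~\ref{strange-vanishing} has to blow up the two $\tfrac12(1,1)$ points, keep the nonzero $c_1$'s, use the relation $c_1(y,K_{g^*\cF})=c_1(x_1,K_{g^*\cF})+c_1(x_2,K_{g^*\cF})$, and only then conclude via $-\tfrac12+\tfrac14+\tfrac14=0$. Your approach also happens to work at cusps and at the Gorenstein cyclic cases, but for different and case-specific reasons (at cusps $X$ is already smooth; in the Gorenstein cases $\cO_X(mK_{\cF})$ is a pullback line bundle), so the uniform ``two terms cancel'' statement does not hold.

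The second gap is the factorization assertion: ``any crepant $f$ dominates (or is dominated by, up to common resolution) the minimal one, and \dots the extra contractions contribute $\chi=0$''. This is the content of the claim the paper proves at the end of its proof, and it is not automatic. One must show that after passing to the minimal partial crepant resolution $g:(Z,\cH)\to(X,\cF)$, the composite $f\circ g$ factors through the minimal partial crepant resolution of $(Y,\cG)$ followed by blow-ups at \emph{smooth} surface points at which the blow-up is crepant for the foliation (so that $m(y)=0$ in the notation of Lemma~\ref{smooth-vanishing}). The paper does this by first reducing to the case that $Y$ has no terminal singular points, then passing to minimal resolutions of $Z$ and of the minimal partial crepant resolution $T$ of $Y$, writing the crepant foliation canonical divisor on each side as the pullback of $K_{\cG}$ plus $\tfrac12\sum E_i$ (respectively $\tfrac12\sum E_i'$) where the $E_i$, $E_i'$ are the $(-2)$-curves over the $\tfrac12(1,1)$ points, and concluding $\sum E_i = (h')^*\sum E_i'$, which forces $h'$ not to blow up any point of $\cup E_i'$. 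Without an argument of this type your reduction to Theorem~\ref{strange-vanishing} and Lemma~\ref{smooth-vanishing} is unsubstantiated.
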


The main ingredients in the proofs of the above theorems are Theorem \ref{strange-vanishing} and the following lemmas.

\begin{Lemma}\label{smooth-vanishing}
Let $ (Y, y)$ be a germ of a smooth surface and let $\cG$ be a foliation with canonical singularity at $y$.
Let $f:(X, C)\to (Y,y)$ be the blow up at $y$ and set $\cF=f^*\cG$. Then $R^1f_*\cO_X (K_{\cF})=0$.
\end{Lemma}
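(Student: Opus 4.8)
The plan is to reduce the statement to the numerical inequality $K_{\cF}\cdot E\geq -1$, where $E=f^{-1}(y)$ is the exceptional curve of the blow-up; recall that $E\cong \PP^1$, $E^2=-1$ and $X$ is smooth. Writing $a:=a_E(\cG)$, so that $K_{\cF}=f^*K_{\cG}+aE$, we have $a\in \ZZ$ (every divisor in sight is Cartier, as $Y$ and $X$ are smooth) and $a\geq 0$ because $\cG$ is canonical at $y$; intersecting with $E$ gives $K_{\cF}\cdot E=-a\leq 0$.

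First I would prove $a\leq 1$, i.e. $K_{\cF}\cdot E\geq -1$. If $E$ is not $\cF$-invariant, then the tangency inequality $\operatorname{tang}(\cF,E)=K_{\cF}\cdot E+E^2\geq 0$ (see \cite{Br15}) forces $K_{\cF}\cdot E\geq 1$, contradicting $K_{\cF}\cdot E\leq 0$; hence $E$ is $\cF$-invariant. In that case a local generator of $T_{\cF}$ along $E$ is tangent to $E$, so restriction to $E$ defines a nonzero, hence injective, map of line bundles $T_{\cF}|_E\hookrightarrow T_E\cong \cO_{\PP^1}(2)$, whose cokernel is a torsion sheaf supported on $(\Sing \cF)\cap E$. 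This intersection is non-empty: the image of the generator is a nonzero global section of $\cO_{\PP^1}(2)$, hence vanishes at some point $q$, and $q\in \Sing \cF$ (at a point of $E$ where $\cF$ is regular the generator is a nonzero vector tangent to $E$). Therefore $-K_{\cF}\cdot E=\deg(T_{\cF}|_E)\leq \deg T_E-1=1$. So $a\in\{0,1\}$.

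It then remains to deduce $R^1f_*\cO_X(K_{\cF})=0$. Since $Y$ is smooth, $K_{\cG}$ is Cartier, so $\cO_X(K_{\cF})\cong f^*\cO_Y(K_{\cG})\otimes \cO_X(aE)$, and by the projection formula $R^1f_*\cO_X(K_{\cF})\cong \cO_Y(K_{\cG})\otimes R^1f_*\cO_X(aE)$. If $a=0$ this vanishes because $R^1f_*\cO_X=0$ for the blow-up of a smooth point. If $a=1$, the exact sequence $0\to \cO_X\to \cO_X(E)\to \cO_E(E)\to 0$, together with $R^1f_*\cO_X=0$ and $R^1f_*\cO_E(E)=H^1(\PP^1,\cO_{\PP^1}(-1))=0$, yields $R^1f_*\cO_X(E)=0$, and again we are done. (Alternatively, once $-1\leq K_{\cF}\cdot E\leq 0$ is known, the vanishing is immediate from the theorem on formal functions: the graded pieces $\cO_X(K_{\cF}-jE)\otimes \cO_E\cong \cO_{\PP^1}(K_{\cF}\cdot E+j)$ of the $E$-adic filtration have vanishing $H^1$ for all $j\geq 0$.)

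The crux is the bound $a_E(\cG)\leq 1$; everything after it is formal. Proving this bound requires the dichotomy according to whether $E$ is $\cF$-invariant and, in the invariant case, the observation that $\cF$ must be singular somewhere on $E$. One could instead extract the bound from McQuillan's classification of canonical foliation singularities on smooth surfaces (cf. \S\ref{contributions}), but the argument sketched above has the advantage of being self-contained.
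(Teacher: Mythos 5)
Your overall strategy mirrors the paper's: reduce to showing that the discrepancy $a=a_E(\cG)$ lies in $\{0,1\}$, then conclude formally using the sequence $0\to \cO_X\to \cO_X(E)\to \cO_E(E)\to 0$ (or formal functions). The formal second half is correct and is essentially identical to the paper's computation. What differs is how you obtain the bound $a\le 1$. The paper applies Brunella's blow-up formula $T_{\cF}=f^*T_{\cG}\otimes \cO_X((l(y)-1)E)$, where $l(y)\ge 0$ is the vanishing order of $f^*\omega$ along $E$, so $a=1-l(y)\le 1$ is immediate. You instead argue index-theoretically: the tangency-formula step showing $E$ is $\cF$-invariant is sound, and so is the observation that $T_{\cF}|_E\hookrightarrow T_E$ has torsion cokernel supported on $\Sing\cF\cap E$.

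The gap is in the claim that $\Sing\cF\cap E\neq\emptyset$. The justification you give --- that ``the image of the generator is a nonzero global section of $\cO_{\PP^1}(2)$, hence vanishes somewhere'' --- does not work. On a neighborhood of $E$ one has $T_\cF\cong\cO_X(-aE)$ and $T_\cF|_E\cong\cO_{\PP^1}(a)$, so $T_\cF$ has no global nowhere-vanishing section unless $a=0$, and the morphism $T_\cF|_E\hookrightarrow T_E$ is a section of $T_E\otimes(T_\cF|_E)^{\vee}\cong\cO_{\PP^1}(2-a)$, not of $\cO_{\PP^1}(2)$. Concluding that this section has a zero requires knowing $2-a>0$, which is precisely what is to be shown; the argument is circular, and as written it does not exclude $a=2$. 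That case is genuinely dangerous: if $a=2$ then $\cO_X(K_\cF)\cong\cO_X(2E)$ up to a trivial twist and $R^1f_*\cO_X(2E)\cong H^1(\PP^1,\cO_{\PP^1}(-2))\neq 0$, so the lemma would fail. The gap is fixable without reverting to the paper's computation: by the Camacho--Sad index theorem, $\sum_{p\in\Sing\cF\cap E}CS(\cF,E,p)=E^2=-1\neq 0$, hence $\Sing\cF\cap E\neq\emptyset$ whenever $E$ is $\cF$-invariant. With that input your route is a valid, self-contained alternative to the paper's; but you do need to invoke it (or some equivalent, such as Brunella's $l(y)\ge 0$), and the step as currently written is unjustified.
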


\begin{proof}
Let $C$ denote the exceptional divisor of $f$.  By definition of canonical singularities we have $K_\cF-f^*K_{\cG}= m(y) C$ for some $m(y)\ge 0$. Note that we have the equality
$$T_{\cF}=f^*T_{\cG} \otimes \cO_X((l(y)- 1)C),$$
where $l(y)\ge 0$ is the vanishing order of the form $f^*\omega$ along $C$ and $\omega$ is the $1$-form 
defining $\cG$ (see \cite[Chapter 2, Section 3, (1)]{Br15}). Therefore $m(y)=1-l(y)\leq 1$ and so
$m(y)=0$ or $m(y)=1$. 
If $m(y)=0$, then $f^*\cO _Y(K_{\cG})\to \cO_X (K_{\cF})$ is an isomorphism. By the projection formula, we have $R^1f_*f^*\cO _Y(K_{\cG})= R^1f_*\cO_X\otimes \cO_Y(K_{\cG})=0$ and so the assertion is clear.  If $m(y)=1$ consider  the short exact sequence
$$0\to f^*\cO _Y(K_{\cG})\to \cO_X (K_{\cF})\to \cO_C(C)\to 0.$$

Pushing this forward, we obtain the exact sequence
$$0= R^1f_*f^*\cO _Y(K_{\cG})\to R^1f_*\cO_X (K_{\cF})\to R^1f_* \cO_C(C)\to 0.$$
Since $ R^1f_* \cO_C(C)= H^1 (\PP^1, \cO _{\PP ^1}(-1)) =0$, then $ R^1f_*\cO_X (K_{\cF})=0$.
\end{proof}

\begin{Lemma}\label{cyclic-vanishing}
Let $(X, \cF, C)\to (Y, \cG, y)$ be a contraction of an $\cF$-chain to a singularity of type $\frac{1}{n}(1,q)$ as in Subsection \ref{terminal}. Then
$$\chi (y, \cO _X(mK_{\cF} ))=\frac{1}{n} \left(  \frac{(m-\overline{mq})  (n-1)}{2} +\frac{m(m-1)q}{2} + \sum _{j=0}^{\overline{mq}-1}\overline{cj}  \right) ,$$
where $c$ denotes an integer such that $qc \equiv -1\mod n$ and $\overline x$ 
denotes the remainder from dividing $x$ by $n$. 
\end{Lemma}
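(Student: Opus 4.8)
The plan is to apply the Riemann--Roch contribution formula \eqref{contribution-formula} of \S\ref{s-RR} to the minimal resolution $f\colon (X,C)\to (Y,y)$ of the cyclic quotient singularity $\frac1n(1,q)$ (see \S\ref{terminal}), taking $\tilde D=mK_{\cF}$. Since cyclic quotient singularities are rational, $(R^1f_*\cO_X)_y=0$, and $f_*(mK_{\cF})=mK_{\cG}$, so the formula becomes
$$\chi\bigl(y,\cO_X(mK_{\cF})\bigr)=a(y,mK_{\cG})-\tfrac12\,c_1(y,mK_{\cF})\cdot\bigl(c_1(y,mK_{\cF})-c_1(y,K_X)\bigr).$$
The first term is given at once by \S\ref{cyclic}: by Corollary \ref{C-ter} we have $\cO_Y(mK_{\cG})\simeq\cL_{\overline{mq}}$, hence $a(y,mK_{\cG})=a(y,\cL_{\overline{mq}})=\frac1n\bigl(\sum_{j=0}^{\overline{mq}-1}\overline{cj}-\tfrac{\overline{mq}(n-1)}2\bigr)$. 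So everything hinges on the correction term.

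The key observation is that $c_1(y,mK_{\cF})=mK_{\cF}-f^*(mK_{\cG})=m\Delta$, where $\Delta=K_{\cF}-f^*K_{\cG}=\sum_i a_{C_i}(\cG)\,C_i$ is the discrepancy divisor of $\cG$; here one uses crucially that $C=C_1\cup\dots\cup C_r$ is an $\cF$-chain, so that $\Delta\cdot C_i=K_{\cF}\cdot C_i=-\delta_{i1}$. Likewise $c_1(y,K_X)=K_X-f^*K_Y=\sum_j a_{C_j}(Y)\,C_j$. Intersecting this with $\Delta$ and using $\Delta\cdot C_j=-\delta_{j1}$ gives $c_1(y,mK_{\cF})\cdot c_1(y,K_X)=-m\,a_{C_1}(Y)$ and $c_1(y,mK_{\cF})^2=m^2\Delta^2=-m^2a_{C_1}(\cG)$, so the proof is reduced to computing the two local discrepancies $a_{C_1}(\cG)$ and $a_{C_1}(Y)$. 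Both I would read off from the negative-definite tridiagonal intersection matrix $M=(C_i\cdot C_j)$ (with $C_i^2=-b_i$, $\frac nq=[b_1,\dots,b_r]$): one has $\det M=(-1)^r n$; deleting the first row and column leaves the intersection matrix of the Hirzebruch--Jung string of $[b_2,\dots,b_r]=q/(b_1q-n)$, of determinant $(-1)^{r-1}q$; and deleting the last row and the first column leaves a triangular matrix with $1$'s on the diagonal, of determinant $1$. Hence $(M^{-1})_{11}=-q/n$ and $(M^{-1})_{1r}=-1/n$. From $Ma=-e_1$ (where $\Delta=\sum_i a_iC_i$) this gives $a_{C_1}(\cG)=-(M^{-1})_{11}=q/n$, and from $M\bigl(a_{C_j}(Y)\bigr)_j=(b_i-2)_i=-\bigl(M\mathbf 1+e_1+e_r\bigr)$ (with $e_1,e_r$ the first and last standard basis vectors) it gives $a_{C_1}(Y)=-1-(M^{-1})_{11}-(M^{-1})_{1r}=-\tfrac{n-q-1}{n}$.

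Substituting these into the displayed formula yields
$$\chi\bigl(y,\cO_X(mK_{\cF})\bigr)=\frac1n\Bigl(\sum_{j=0}^{\overline{mq}-1}\overline{cj}-\tfrac{\overline{mq}(n-1)}2+\tfrac{m^2q}2+\tfrac{m(n-q-1)}2\Bigr),$$
and since $-\overline{mq}(n-1)+m^2q+m(n-q-1)=(m-\overline{mq})(n-1)+m(m-1)q$, this is exactly the asserted expression. I expect no conceptual difficulty here; the only real work is the bookkeeping with Mumford-type intersection numbers, and the step most easily gotten wrong is the identification $c_1(y,mK_{\cF})=m\Delta$, which rests on the defining property $K_{\cF}\cdot C_1=-1$, $K_{\cF}\cdot C_i=0$ $(i>1)$ of an $\cF$-chain together with the fact that $\QQ$-Mumford pullbacks meet every exceptional curve in $0$.
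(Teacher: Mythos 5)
Your proof is correct and follows essentially the same route as the paper's: both start from the Riemann--Roch contribution formula, identify $a(y,mK_{\cG})=a(y,\cL_{\overline{mq}})$ via Corollary \ref{C-ter}, exploit the $\cF$-chain condition to collapse the quadratic correction term to the single coefficients $a_{C_1}(\cG)$ and $a_{C_1}(Y)$, and then substitute. The only difference is that you make explicit (via cofactor computations on the tridiagonal intersection matrix) the linear-algebra step that the paper dispatches with ``solving these systems of equations one can easily see.''
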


\begin{proof}
For the exceptional curve $C=\bigcup C_i$ we set $C_i^2=-b_i$ for some $b_i\ge 2$.
Let us write $c_1(y, K_{X})= \sum x_i C_i$ and  $c_1(y, K_{\cF})= \sum y_i C_i$. The rational numbers $x_i$ and $y_i$ are uniquely determined by the following systems of linear equations:
$$( \sum x_i C_i) \cdot C_j= K_X\cdot C_j=-C_j^2-2=b_j-2
$$
and
$$( \sum y_i C_i) \cdot C_j= \left\{
\begin{array}{cl}
-1& \hbox{ for $j=1$,}\\
0& \hbox{ for $j>1$.}
\end{array}
\right.$$
Solving these systems of equations one can easily see that
$$x_1=-1+\frac{q+1}{n}\quad \hbox{ and }\quad y_1=\frac{q}{n}.$$
Therefore we have
$$\begin{aligned}
\chi (y, \cO _X(mK_{\cF} ))=&a (y, mK_{\cG})-\frac{1}{2}mc_1(y, K_{\cF})\left(
mc_1(y, K_{\cF})-c_1(y, K_{X})\right)\\
=&a (y, mK_{\cG})- \frac{1}{2}m (\sum y_i C_i) (  \sum (my_j-x_j) C_j)\\
=&a (y, mK_{\cG})-\frac{1}{2}m (my_1-x_1).
\end{aligned}$$
Since by Lemma \ref{Brunella-cor} the sheaf $\cO _X(mK_{\cG} )$ is locally of type $\cL_{\overline{mq}}$, the required formula follows from the above and  the corresponding formula for $a (y, \cL_{\overline{mq}})$ 
from \S \ref{s-RR}.
\end{proof}

\medskip

\begin{Remark}
Let us note that the formula in Lemma \ref{cyclic-vanishing} gives vanishing of $\chi (y, \cO _X(K_{\cF} ))$. However, unlike in Theorem \ref{strange-vanishing}, $\chi (y, \cO _X(mK_{\cF} ))$  is usually non-zero for $m\ge 2$. For example, for a terminal foliation on the singularity of type $\frac{1}{3}(1,1)$ we have $\chi (y, \cO _X(2K_{\cF} ))=1$. 
In fact, the vanishing of $\chi (y, \cO _X(mK_{\cF} ))$ fails for $m\ge 2$ already in the situation of Lemma \ref{smooth-vanishing} (if $\cG$ is regular at $y$).
\end{Remark}

\medskip

\begin{proof}[Proof of Theorem \ref{strong-vanishing}.]
The equality  $f_* \cO_X(mK_{\cF})=\cO_X(mK_{\cG})$ for $m\ge 0$ follows from the definition of canonical singularities and \cite[Theorem 6.2]{Sa84}. Hence vanishing of $R^i f_* \cO_X(K_{\cF})$  is equivalent to vanishing of $\chi (y, \cO _X(K_{\cF} ))=\dim R^1 f_* \cO_X(K_{\cF})_y$ for all points $y\in Y$.

Let $g: (Z, \cH)\to (X, \cF)$ be a proper birational morphism such that $f\circ g$ dominates the minimal resolution of singularities of $Y$. By \S \ref{mod-Euler} we have
$$\chi (y, \cO_Z ( K_{\cH}))= \chi (y,  \cO_Y (K_{\cF}))+\sum _{x\in f^{-1}(y)} \chi (x, \cO_Z (K_{\cH})) .$$
Since all the numbers are non-negative, it is sufficient to prove that $\chi (y, \cO_Z ( K_{\cH}))=0$.

Now let us remark that by assumption $f\circ g$ factors into a composition of maps considered in Theorem  \ref{strange-vanishing} and Lemmas \ref{smooth-vanishing} and \ref{cyclic-vanishing}. Since for each of these maps we have vanishing of the modified Euler characteristics $\chi (\tilde y, \cO (K_{\tilde \cF} ))$, we have also vanishing of $\chi (y, \cO_Z ( K_{\cH}))$. This finishes the proof of Theorem \ref{strong-vanishing}.\end{proof}

\medskip

\begin{Remark}
Since we have a generically surjective map $\Omega _{X}\to \cO_X (K_{\cF})$, the vanishing of  
$R^1 f_* \cO_X(K_{\cF})$ would follow from the vanishing of $R^1f_*\Omega_X$. Unfortunately, this last 
group is usually non-zero. For example if $f$ is the minimal resolution of a quotient singularity then one can show that 
the dimension of the stalk of $R^1f_*\Omega_X$ at the singularity is equal to the number of exceptional curves in the resolution.
\end{Remark}

\medskip

\begin{Corollary}
Let $(X, \cF)$ and $(Y, \cG)$ be  foliated complete surfaces with only canonical singularities. If 
$(X, \cF)$ and $(Y, \cG)$ are birationally equivalent then
$$h^0(X, \cO_X(mK_{\cF}))=h^0(Y, \cO_Y(mK_{\cG}))$$
for all $m\ge 0$, and  
$$h^i(X, \cO_X(K_{\cF}))=h^i(Y, \cO_Y(K_{\cG}))$$
for all $i$. In particular, we have $\chi (X, \cO_X(K_{\cF}))=\chi (Y, \cO_Y(K_{\cG})).$
\end{Corollary}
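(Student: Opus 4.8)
The plan is to reduce the statement to Theorem~\ref{strong-vanishing} by producing a single foliated surface with canonical singularities that dominates both $(X,\cF)$ and $(Y,\cG)$, and then transporting cohomology through the two contractions.

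\textbf{A common roof.} Since $(X,\cF)$ and $(Y,\cG)$ are birationally equivalent, I would take the normalization $W_0$ of the closure, inside $X\times Y$, of the graph of the birational equivalence relating $X$ and $Y$; this is a complete normal surface admitting proper birational morphisms to $X$ and to $Y$. Passing first to a resolution of singularities of $W_0$ and then applying Seidenberg's reduction of singularities of foliations, I obtain a smooth complete surface $W$ with proper birational morphisms $g\colon W\to X$ and $h\colon W\to Y$, carrying the pulled-back foliation $\cH$ --- the unique saturated rank one subsheaf of $T_W$ agreeing at the generic point with $\cF$, equivalently with $\cG$ --- which now has only reduced singularities. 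A smooth foliated surface with only reduced singularities has canonical foliation singularities, so $(W,\cH)$ has only canonical singularities; and since push-forward and pull-back of foliations are mutually inverse (as recorded after the definition of pull-back foliations in the preliminaries), the maps $g\colon(W,\cH)\to(X,\cF)$ and $h\colon(W,\cH)\to(Y,\cG)$ are proper birational morphisms of foliated surfaces with only canonical singularities. Hence Theorem~\ref{strong-vanishing} applies to both: $g_*\cO_W(mK_{\cH})=\cO_X(mK_{\cF})$ for all $m\ge 0$, $R^ig_*\cO_W(K_{\cH})=0$ for $i>0$, and likewise for $h$.

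\textbf{Transporting cohomology.} For global sections I would simply use $H^0\bigl(W,\cO_W(mK_{\cH})\bigr)=H^0\bigl(X,g_*\cO_W(mK_{\cH})\bigr)=H^0\bigl(X,\cO_X(mK_{\cF})\bigr)$ for every $m\ge 0$, together with the symmetric identity for $h$, giving the first displayed equality. For the higher cohomology of $K_{\cF}$, the Leray spectral sequence of $g$ with coefficients in $\cO_W(K_{\cH})$ degenerates because $R^{>0}g_*\cO_W(K_{\cH})=0$, yielding $H^i\bigl(W,\cO_W(K_{\cH})\bigr)\cong H^i\bigl(X,\cO_X(K_{\cF})\bigr)$ for all $i$; the same argument for $h$ gives $H^i\bigl(W,\cO_W(K_{\cH})\bigr)\cong H^i\bigl(Y,\cO_Y(K_{\cG})\bigr)$, which is the second displayed equality. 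The assertion on $\chi$ is then the alternating sum of these identities.

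\textbf{Main obstacle.} The only non-formal point is the claim used to build the roof, namely that a smooth surface equipped with a foliation having only reduced singularities has canonical foliation singularities; this rests on the birational theory of foliated surfaces --- concretely on the discrepancy computation already appearing in the proof of Lemma~\ref{smooth-vanishing}, combined with the fact that blowing up a reduced singularity again produces only reduced singularities, so that every exceptional discrepancy is $\ge 0$. Once this is granted, the construction of the common roof and the two cohomological comparisons are straightforward. One should also check that $\cH$ restricts to $\cF$ (resp.\ to $\cG$) over the locus where $g$ (resp.\ $h$) is an isomorphism, but this is immediate from the lemma that two saturated subsheaves of a torsion-free sheaf which agree at the generic point coincide.
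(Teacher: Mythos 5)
Your proposal is correct and follows essentially the same route as the paper: construct a common foliated surface $(W,\cH)$ with canonical singularities dominating both $(X,\cF)$ and $(Y,\cG)$ by proper birational morphisms, then apply Theorem~\ref{strong-vanishing} to each and compare via the Leray spectral sequence. The paper simply asserts the existence of such a roof, whereas you spell out the standard construction (graph closure, resolution, Seidenberg reduction) and recall that reduced singularities are canonical --- both are well-known, so the two arguments coincide in substance.
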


\begin{proof}
There exists a foliated complete surface $(Z, \cH)$  with only canonical singularities and proper birational morphisms $f: (Z, \cH)\to (X, \cF) $ and  $g: (Z, \cH)\to (Y, \cG)$. Therefore the required assertions follow by applying Theorem \ref{strong-vanishing} to the morphisms $f$ and $g$.
\end{proof}

\begin{Remark}
When $X$ is smooth, the above Corollary is proven in \cite[Theorem 3.1.1]{Men98}. 
\end{Remark}
\begin{Remark}
Let us note that for $m\ne 1$, $\chi (X, \cO_X(mK_{\cF}))$ is not a birational invariant of foliations with canonical singularities (not even for $m=0$).
\end{Remark}

\medskip

\begin{proof}[Proof of Theorem \ref{crepant-vanishing}.]
The proof is similar to that of Theorem \ref{strong-vanishing}. By Theorem \ref{strong-vanishing} the required assertion
is equivalent to vanishing of $\chi (y, \cO_X ( mK_{\cF}))=0$ for $m\ne 0$.
Let  $g: (Z, \cH)\to (X, \cF)$ be the minimal partial crepant resolution of singularities of $(X, \cF)$ (see Section \ref{special-minimal}).  By \S \ref{mod-Euler} we have
$$\chi (y, \cO_Z (m K_{\cH}))= \chi (y,  \cO_Y (mK_{\cF}))+\sum _{x\in f^{-1}(y)} \chi (x, \cO_Z (mK_{\cH})) .$$
Since all the numbers are non-negative, it is sufficient to prove that $\chi (x, \cO_Z ( mK_{\cH}))=0$ for $m\ne 0$.

We claim that $f\circ g$ factors into a composition of  the minimal partial crepant resolution of singularities
of $(Y, \cG)$ and blow ups at smooth points of the surface that are not regular for the foliation.
At such points $m(y)$ from the proof of Lemma  \ref{smooth-vanishing} is equal to zero and the same proof as that of Lemma  \ref{smooth-vanishing} shows that at such points $R^1\tilde f_*\cO (mK_{\tilde \cF})=0$ for $m\ne 0$. Therefore the required assertion follows from Theorem
\ref{strange-vanishing}.

To prove the claim let us first remark that $f$ is an isomorphism over points $y$ at which $\cG$ is terminal.
Indeed, this follows immediately from the fact that  for every prime divisor $E$ over such $y$ we have $a_E(\cG)>0$
and hence  if $f$ is not an isomorphism over $y$ we get a contradiction with $K_{\cF}= f^*K_{\cG}$. Since the claim is local on $Y$, we can therefore assume that $Y$ does not contain any singular points  at which $\cG$ is terminal.
In this case  we consider the minimal resolution of singularities $h: Z'\to Z$. Then $f\circ g\circ h$
can be factored  as  $f'\circ g'\circ h'$, where $f': T \to Y$ is  the minimal partial crepant resolution of singularities
of $(Y, \cG)$ and $g': T'\to T$ is the minimal  resolution of $T$. 
Let us note that 
$$K_{h^*\cH}=h^*K_{\cH}+\frac{1}{2}\sum E_i= (f\circ g \circ h)^*K_{\cG}+\frac{1}{2}\sum E_i,$$
where $\{E_i\}$ are disjoint curves with self intersection $-2$ (these curves arise when resolving singularities in Cases 2 and 3 in the proof of Theorem \ref{strange-vanishing}). We can also write 
$$ K_{(f'\circ g')^*\cG}= (f'\circ g')^*K_{\cG}+\frac{1}{2}\sum E_i',$$
where $\{E_i'\}$ are disjoint curves with self intersection $-2$. It follows that 
$$\sum E_i = (h')^*\sum E_i'.$$
Therefore $(h')^*\sum E_i'$ contains no  curves of self intersection $-1$ and hence $h'$ does not blow up any points lying on $\{E_i'\}$ and hence we have an induced morphism $Z\to T'$, which finishes proof of the claim and hence also of the theorem.
\end{proof}

\medskip

Theorem \ref{crepant-vanishing} together with Proposition \ref{sing-can-model} implies the following corollary.

\begin{Corollary}
The Hilbert function of a canonical model of a foliation determines the Hilbert function of any weak nef model.
More precisely, if $(X, \cF)$ is a weak nef model, $(Y, \cG)$ is a canonical model and $(X, \cF)$ and $(Y, \cG)$
are birationally equivalent, then
$$\chi (X, \cO _X(mK_{\cF}) )- \chi (Y, \cO _Y (mK_{\cG}) )= \left\{ 
\begin{array}{cl}
-c & \hbox{ if $m=0$,}\\
0& \hbox{ if $m\ne 0$,}\\
\end{array}
\right.
$$
where $c$ denotes the number of cusps of $Y$.
In particular, any two birationally equivalent  weak nef models have the same Hilbert function.
Similarly, any two birationally equivalent  canonical models have the same Hilbert function.
\end{Corollary}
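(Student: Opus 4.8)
The plan is to realise the weak nef model $(X,\cF)$ and the canonical model $(Y,\cG)$ as the two ends of a single \emph{crepant} birational morphism and then to run the modified Euler characteristic bookkeeping of \S\ref{mod-Euler} through the vanishing theorems \ref{strong-vanishing} and \ref{crepant-vanishing}. First I would produce a proper birational morphism $h\colon (X,\cF)\to(Y,\cG)$ with $K_{\cF}=h^{*}K_{\cG}$: since $(X,\cF)$ is a weak nef model, $K_{\cF}$ is nef and hence pseudoeffective, so by \cite[Theorem 1, III.3.2]{McQ08} there is a proper birational morphism from $(X,\cF)$ to a canonical model, which is crepant by Lemma \ref{crepant}; and since the canonical model of a foliated surface of general type is unique in its birational class (Brunella--McQuillan, cf. the introduction), this canonical model is isomorphic to the given $(Y,\cG)$. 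Then $mK_{\cF}=h^{*}(mK_{\cG})$ as Weil divisors and $h_{*}\cO_{X}(mK_{\cF})=\cO_{Y}(mK_{\cG})$ for all $m$, by Theorem \ref{strong-vanishing} (or Sakai's projection formula \cite[Theorem 6.2]{Sa84}).

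With $h$ in hand, the identity of \S\ref{mod-Euler} reads
$$\chi(X,\cO_{X}(mK_{\cF}))=\chi(Y,\cO_{Y}(mK_{\cG}))-\sum_{y\in Y}\chi(y,\cO_{X}(mK_{\cF})),$$
and, because $h_{*}\cO_{X}(mK_{\cF})=\cO_{Y}(mK_{\cG})$, each local term reduces to $\chi(y,\cO_{X}(mK_{\cF}))=\dim(R^{1}h_{*}\cO_{X}(mK_{\cF}))_{y}$. For $m\neq 0$ this vanishes by Theorem \ref{crepant-vanishing}, so the sum is $0$ and the two Hilbert values coincide. For $m=0$ I would compute $\dim(R^{1}h_{*}\cO_{X})_{y}$: a weak nef model has only quotient, in particular rational, singularities (Definition \ref{d-wnm} together with \S\ref{terminal}), so composing $h$ with a resolution of $X$ identifies $\dim(R^{1}h_{*}\cO_{X})_{y}$ with the geometric genus of the surface singularity $(Y,y)$. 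By McQuillan's classification (Section \ref{contributions}) this singularity is rational — hence of geometric genus $0$ — unless $(Y,y)$ is a cusp, in which case the geometric genus is $1$ (as in the proof of Lemma \ref{RR-cusp}, where $\dim R^{1}f_{*}\cO_{X}=1$ for the minimal resolution $f$ of a cusp). Hence $\sum_{y}\chi(y,\cO_{X})$ equals the number $c$ of cusps of $Y$, which gives the asserted formula. (Alternatively, one may factor $h$ through the minimal partial crepant resolution of \S\ref{special-minimal} and read the local terms directly off Theorems \ref{strange-vanishing} and \ref{crepant-vanishing}.)

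For the two ``in particular'' assertions: in a fixed birational class of general type the canonical model $(Y,\cG)$ is unique, so every weak nef model in the class admits a crepant morphism to it, and by Proposition \ref{sing-can-model} the number $c$ depends only on the Hilbert function of $(Y,\cG)$; the displayed formula then shows that every weak nef model in the class has the same Hilbert function $m\mapsto\chi(Y,\cO_{Y}(mK_{\cG}))-c\,\delta_{m,0}$, so any two of them agree. Two birationally equivalent canonical models are isomorphic, hence trivially have the same Hilbert function.

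I expect the crux to be the $m=0$ bookkeeping over the cusps — pinning down $\dim(R^{1}h_{*}\cO_{X})_{y}$ at a cusp $y$ as $1$ (equivalently, that the underlying surface cusp has geometric genus one and that passing from the minimal partial crepant resolution to the further blown-up model $X$ contributes nothing extra there) — together with checking that the reflexive sheaves $\cO(mK_{\cF})$ and $\cO(mK_{\cG})$ push forward and pull back compatibly with the Mumford intersection theory; once these are settled, the rest is the formal additivity of the invariant $\chi(y,-)$ of \S\ref{mod-Euler}.
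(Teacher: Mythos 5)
Your argument is correct and follows the same route the paper intends: produce the crepant morphism $h\colon (X,\cF)\to(Y,\cG)$ via [McQ08, Thm.\ III.3.2], Lemma \ref{crepant}, and uniqueness of the canonical model, then run the additivity of the modified Euler characteristic from \S\ref{mod-Euler}, using Theorem \ref{crepant-vanishing} for $m\ne 0$ and the computation of $\dim(R^1h_*\cO_X)_y$ (zero except at cusps, where it is $1$) for $m=0$. This fills in precisely the bookkeeping the paper's one-line proof leaves to the reader.
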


\begin{proof}
To prove the required equality it is sufficient to compute $\chi (y, \cO _X(mK_{\cF}))$ at all points $y$ of $Y$ and apply the results of \S \ref{mod-Euler}.
\end{proof}

}

\medskip

\subsection*{Acknowledgements}

A large part of the paper was written during the second author's visit to Salt Lake City. He would like to thank
the University of Utah for excellent hiking and working conditions.
The authors would also like to thank the referees for their valuable comments and suggestions and Shou-Xian Li for pointing out an error in the proof of Lemma 2.5 in the previous version of this paper.


\begin{thebibliography}{HMX18}


\bibitem [Ar62]{Ar62} Michael Artin,  {\it Some numerical criteria for contractability of curves on algebraic surfaces.} {Amer. J. Math.} {\bf 84} (1962), 485--496.


\bibitem[Ber19]{Ber19} Fabio  Bernasconi, {\it Bounds on volumes of foliations}, private communication, September 2019.

\bibitem  [Br15]{Br15} Marco  Brunella, {\it Birational geometry of foliations.} {IMPA Monographs}, {\bf 1}. Springer, Cham, 2015. xiv+130 pp.

 
\bibitem [Fuj12]{Fuj12} Osamu Fujino,  {\it Minimal model theory for log surfaces. } {Publ. Res. Inst. Math. Sci.}
{\bf 48} (2012), 339--371.
\bibitem[HMX18]{HMX18}  Christopher D. Hacon,  James McKernan, Chenyang Xu, {\it Boundedness of varieties of log general type.} Algebraic geometry: Salt Lake City 2015, 309--348, Proc. Sympos. Pure Math., 97.1, Amer. Math. Soc., Providence, RI, 2018.

\bibitem [Jo79]{Jo79} Jean-Pierre Jouanolou, {\it \'Equations de Pfaff alg\'ebriques.}
Lecture Notes in Mathematics, 708. Springer, Berlin, 1979. v+255 

\bibitem[Kol07]{Kol07}  J\'anos Koll\'ar, {\it Lectures on resolution of singularities.} Annals of Mathematics Studies, 166. Princeton University Press, Princeton, NJ, 2007. vi+208
\bibitem [La00]{La00}  Adrian  Langer, {\it Chern classes of reflexive sheaves on normal surfaces.} {Math. Z.} {\bf 235} (2000), 591--614.
 
\bibitem [La01]{La01}   Adrian  Langer, {\it Adjoint linear systems on normal log surfaces.} {Compositio Math.} {\bf 129} (2001), 47--66. 




\bibitem [Laz04]{Laz04}  Robert Lazarsfeld, {\it Positivity in algebraic geometry. I. Classical setting: line bundles and linear series.} {Ergebnisse der Mathematik und ihrer Grenzgebiete. 3. Folge. } A Series of Modern Surveys in Mathematics, {\bf 48}. Springer-Verlag, Berlin, 2004. xviii+387 pp. 

\bibitem[Men98]{Men98} Luis Gustavo Mendes, {\it Bimeromorphic invariants of foliations}, Doctoral Thesis, IMPA, Rio de Janeiro 1998.

\bibitem [McQ08]{McQ08}  Michael McQuillan, {\it Canonical models of foliations.} {Pure Appl. Math. Q.} {\bf 4} (2008), Special Issue: In honor of Fedor Bogomolov. Part 2, 877--1012.

\bibitem[Per02]{Per02}  Jorge Vit\'orio Pereira, {\it On the Poincar\'e problem for foliations of general type.} Math. Ann. 323 (2002), no. 2, 217--226.

\bibitem [PS19]{PS19}  Jorge Vit\'orio Pereira, Roberto Svaldi, {\it Effective algebraic integration in bounded genus. } {Algebr. Geom.} {\bf 6} (2019), 454--485. 

\bibitem [Re85]{Re85} Miles  Reid,  {\it Young person's guide to canonical singularities.} Algebraic geometry, Bowdoin, 1985 (Brunswick, Maine, 1985), 345--414, {Proc. Sympos. Pure Math.}, {\bf 46}, Part 1, Amer. Math. Soc., Providence, RI, 1987. 

\bibitem [Re93]{Re93}  Miles Reid, {\it Chapters on algebraic surfaces.} Complex algebraic geometry (Park City, UT, 1993), 3--159, IAS/Park City Math. Ser., 3, Amer. Math. Soc., Providence, RI, 1997.

\bibitem [Sa84]{Sa84}   Fumio Sakai, {\it Weil divisors on normal surfaces.} {Duke Math. J.} {\bf 51} (1984), 877--887.


\bibitem [Wu85]{Wu85} 
J\"urgen Wunram, 
{\it Reflexive modules on cyclic quotient surface singularities. Singularities, representation of algebras, and vector bundles } (Lambrecht, 1985), 221--231,  {Lecture Notes in Math.}, {\bf 1273}, Springer, Berlin, 1987. 

\end{thebibliography}
\end{document}